\newtheorem{theorem}{Theorem}[section]
\newtheorem{corollary}[theorem]{Corollary}
\newtheorem{lemma}[theorem]{Lemma}
\newtheorem*{remark}{Remark}
\newcommand{\linnum}{\stepcounter{theorem}\tag{\thetheorem}}
\def\G{\Gamma}
\def\F{F\o lner}
\def\t #1.{\tilde{#1}}
\def\Z{\mathbb Z}
\def\zg{\gamma}
\def\R{{\mathbb R}}
\def\bd{\partial}
\def\Rth{{\mathbb R}^3}
\def\Zd{\Delta}
\def\za{\alpha}
\def\zb{\beta}
\def\zg{\gamma}
\def\zd{\delta}
\def\fr #1.#2.{\frac{#1}{#2}}
\def\zl{\lambda}
\def\zep{\epsilon}
\def\inv{^{-1}}
\def\Rn{{\mathbb R}^n}
\def\Rm{{\mathbb R}^m}
\title{Amenability, F\o lner ratios, and cooling functions}
\author{J. W. Cannon}
\email{cannon@math.byu.edu}
\address{Department of Mathematics\\Brigham Young University\\
Provo, UT  84602\\U.S.A.}
\author{W. J. Floyd}
\email{floyd@math.vt.edu}
\address{Department of Mathematics\\Virginia Tech\\
Blacksburg, VA 24061\\U.S.A.
\newline http://www.math.vt.edu/people/floyd}
\author{W. R. Parry}
\email{walter.parry@emich.edu}
\address{Department of Mathematics\\Eastern Michigan University\\
Ypsilanti, MI 48197\\U.S.A.}
\keywords{amenable, finitely generated group}
\subjclass{ 43A07, 20F65}
\date{\today}
\begin{document}
\maketitle

\begin{abstract} Erling \F\ proved that the amenability or
nonamenability of a countable group $G$ depends on the complexity of
its finite subsets $S \subset G$. Complexity has three measures:
maximum \F\ ratio, optimal cooling function, and minimum cooling norm.

Our first aim is to show that, for a fixed finite subset $S\subset G$,
these three measures are tightly bound to one another. We then explore
their algorithmic calculation.

Our intent is to provide a theoretical background for algorithmically
exploring the amenability and nonamenability of discrete groups.
\end{abstract}

\maketitle

\section{Introduction}\label{sec:intro} A countable group $G$ is
\emph{amenable} if it admits a finitely additive, translation
invariant measure defined on every subset of $G$ such that the measure
of $G$ is 1. Such groups are important in measure theory, dynamical
systems, and ergodic theory. Nonamenable groups lead to the
Hausdorff-Banach-Tarski paradox which asserts that \emph{one} copy of
the unit ball in $\Rth$ can be rigidly torn into finitely many pieces
that can be rigidly reassembled to form \emph{two} copies of the unit
ball.

In \cite{Fol54} and \cite{Fol55}, Erling \F\ gave two
geometric characterizations of countable amenable groups $G$. These
characterizations make these groups very attractive to geometric group
theorists. Since we emphasize finite subsets $S$ of $G$, we shall
state the \F\ theorems in a slightly nonstandard way. In particular,
F\o lner's statement of Theorem 1.2 is in terms of global cooling
functions and not local cooling functions. First we give some
introductory definitions.

Let $\G = \G(G,C)$ denote the Cayley graph of an infinite group $G$
with finite generating set $C$, let $S \subset G$ be nonempty and
finite, let $E(S)$ denote the set of edges of $\G$ with at least one
vertex in $S$, and let $\bd E(S)$ denote the set of boundary edges of
$S$ consisting of those edges that have exactly one vertex in
$S$. Orient each edge $e$ of $E(S)$ with one of its two possible
orientations so that $e$ has an initial vertex $i(e)$ and a terminal
vertex $t(e)$. For the edges $e$ of $\bd E(S)$, choose the orientation
so that $i(e) \in S$.

The \emph{\F\ ratio} of $S$ is the quotient $|S|/|\bd E(S)|$. (We will
see that $\partial E(S)\ne \emptyset$.)  We will never reduce this
fraction except to compare sizes. That is, if we write that the \F\
ratio of $S$ is $a/b$, then we always assume that $a = |S|$ and that
$b = |\bd E(S)|$.

A \emph{cooling function}\ for $S$ is a function $c:E(S) \to \R$ such
that, $\forall \, s \in S,$
$$h(s) \equiv \sum_{i(e) = s} c(e) - \sum_{t(e) = s} c(e) \ge 1.$$ One
can interpret $c(e)$ as the heat pumped along $e$ from $i(e)$ to
$t(e)$. Then $h(s)$ is the net loss of heat at $s$. The \emph{cooling
norm}\ of $c$ is $\left|c\right|=\max_{e \in E(S)}|c(e)|$.

Here are the two \F\ characterizations. For the purposes of this
paper, the reader can take either of these \F\ theorems as defining
amenability.

\begin{theorem}[\F\ set] The finitely generated group $G$ is amenable
if and only if there exist finite subsets $S_1\subset S_2 \subset S_3
\subset \cdots$ exhausting $G$ whose \F\ ratios $|S_i|/|\bd E(S_i)|$
approach $\infty$.
\end{theorem}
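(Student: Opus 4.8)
I would prove Theorem 1.1 by showing that its \F-set condition is equivalent to the existence of a (right-)invariant mean on $\ell^\infty(G)$, i.e.\ to amenability as defined in the Introduction. Throughout I use the dictionary supplied by the Cayley graph: a boundary edge of a finite $S$ labeled $c\in C$ is a pair $(s,sc)$ with $s\in S$, $sc\notin S$, so $|\bd E(S)|$ is squeezed, up to the constant $|C|$, between $\max_{c\in C}|Sc\,\triangle\,S|$ and $\sum_{c\in C}|Sc\,\triangle\,S|$. Since $C$ generates $G$, the condition $|S_i|/|\bd E(S_i)|\to\infty$ along an exhausting chain is therefore equivalent to $|S_ig\,\triangle\,S_i|/|S_i|\to0$ for every $g\in G$ (the classical \F\ condition), and it is this equivalence I would establish.

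\textbf{Easy direction (\F\ sets $\Rightarrow$ amenable).} Given the chain $S_1\subset S_2\subset\cdots$ with $|\bd E(S_i)|/|S_i|\to0$, put $f_i=|S_i|\inv\mathbf 1_{S_i}\in\ell^1(G)\subset\ell^\infty(G)^*$ and let $m$ be a weak-$*$ cluster point of $(f_i)$, which exists by weak-$*$ compactness of the unit ball. Positivity and $m(\mathbf 1_G)=1$ survive the limit, so $m$ is a mean. For each $c\in C$, with $(\rho_g f)(x)=f(xg)$ one has $\rho_c\mathbf 1_{S_i}=\mathbf 1_{S_ic\inv}$, hence $\|\rho_cf_i-f_i\|_1=|S_i|\inv|S_ic\,\triangle\,S_i|\le 2|S_i|\inv|\bd E(S_i)|\to0$; so $m$ is fixed by $\rho_c$ for every $c\in C$, and then by $\rho_g$ for every $g\in G$ by induction on word length. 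Thus $m$ is an invariant mean and $G$ is amenable.

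\textbf{Hard direction (amenable $\Rightarrow$ \F\ sets).} Start from an invariant mean $m$ and run the Day--Namioka reduction to norm-almost-invariant probability vectors. (i) $m$ lies in the weak-$*$ closure in $\ell^\infty(G)^*$ of the convex set $P$ of probability vectors of $\ell^1(G)$, so pick a net $(f_\alpha)$ in $P$ with $f_\alpha\to m$ weak-$*$; then $\rho_cf_\alpha-f_\alpha\to0$ weakly in $\ell^1(G)$ for each $c\in C$, using invariance of $m$. (ii) Hence $0$ lies in the weak closure of the convex subset $\{(\rho_cf-f)_{c\in C}:f\in P\}$ of the finite sum $\bigoplus_{c\in C}\ell^1(G)$, and therefore — weak and norm closures of a convex set agree, by Hahn--Banach/Mazur — in its norm closure; so for every $\zep>0$ there is $f\in P$ with $\sum_{c\in C}\|\rho_cf-f\|_1<\zep$. (iii) Layer-cake: with $S_t=\{x:f(x)>t\}$ (finite for $t>0$) one has $f=\int_0^\infty\mathbf 1_{S_t}\,dt$, $\|f\|_1=\int_0^\infty|S_t|\,dt=1$, and, since $\{\rho_cf>t\}=S_tc\inv$ and $|S_tc\inv\,\triangle\,S_t|=|S_tc\,\triangle\,S_t|$, $\|\rho_cf-f\|_1=\int_0^\infty|S_tc\,\triangle\,S_t|\,dt$; comparing $\int_0^\infty\sum_{c}|S_tc\,\triangle\,S_t|\,dt<\zep=\zep\int_0^\infty|S_t|\,dt$ forces some $t>0$ with $|\bd E(S_t)|\le\sum_c|S_tc\,\triangle\,S_t|<\zep|S_t|$. (iv) Taking $\zep=1/i$ gives finite sets whose \F\ ratios tend to $\infty$; to make them into an exhausting chain, note the ratio is unchanged under left translation, and replace the $i$-th set by the union of $\{g:|g|\le i\}$, the previous set, and a large number of pairwise-disjoint far-apart left translates of the original (possible since $SS\inv$ is finite): this makes the set contain the prescribed pieces while the many translates dilute the boundary-to-size ratio back down.

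\textbf{Main obstacle.} The real content is step (ii): converting an invariant mean — an element of the bidual produced by compactness — into a genuinely \emph{norm}-almost-invariant nonnegative $\ell^1$ function, via weak-$*$ density of $P$ together with the coincidence of weak and norm closures for convex sets. Once that is available, the layer-cake extraction of a single \F\ level set and the bookkeeping in (iv) are routine, and the easy direction is immediate from weak-$*$ compactness. (If one prefers to take the \F-set condition itself as the definition of amenability, as the excerpt indicates is permissible, Theorem 1.1 may instead be cited directly from \cite{Fol55}; the argument above is the self-contained route.)
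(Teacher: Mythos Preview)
Your argument is correct; it is the standard Day--Namioka proof of the \F\ characterization. But note that the paper does not itself prove Theorem~1.1: it is quoted from \cite{Fol54,Fol55} and the reader is explicitly invited to take it as the \emph{definition} of amenability. What the paper does (and sketches in the Introduction) is \F's original route, which passes through the cooling-function characterization (Theorem~1.2): existence of an invariant mean is, via a Banach/Hahn--Banach argument, equivalent to the nonexistence of uniformly bounded cooling functions, and the \F-set condition is then tied to the cooling condition --- the paper's main theorem making that link quantitatively exact, $N=\max_{S_0\subset S}FR(S_0)$.

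So your route and the paper's are genuinely different. \F's goes through a finite linear-inequality separation (Farkas) on each $S$ to produce cooling functions; yours uses a single global separation (Mazur: weak closure $=$ norm closure for convex sets) in $\ell^1(G)$ to produce almost-invariant probability vectors, then the layer-cake to extract a \F\ set. What you gain is a self-contained proof independent of the cooling machinery; what the paper's approach gains is the exact identity between minimum cooling norm and maximum \F\ ratio, which is the engine for everything that follows. One point to tighten in your step~(iv): when taking many left translates $g_jT$, you need them not merely disjoint but with no edges between distinct translates or between the translates and $B_i\cup S_{i-1}$; for that choose the $g_j$ so that $g_k^{-1}g_j\notin T\,C^{\pm1}T^{-1}$ for $j\ne k$ (and similarly relative to the fixed finite piece), which is possible since $G$ is infinite and these obstruction sets are finite.
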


\begin{theorem} The finitely generated group $G$ is nonamenable if and
only if there exist finite subsets $S_1\subset S_2 \subset S_3 \subset
\cdots$ exhausting $G$ and admitting cooling functions $c_i:E(S_i)\to
\R$ whose cooling norms $|c_i|$ are uniformly bounded.
\end{theorem}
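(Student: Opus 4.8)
The plan is to prove Theorem 1.2 by tying the cooling norm of a finite set $S$ to the \F\ ratios of the \emph{subsets} of $S$, and then invoking Theorem 1.1. The crucial bridge is the following, valid for any finite nonempty $S\subset G$ and any $\lambda\ge 0$: \emph{$S$ admits a cooling function $c\colon E(S)\to\R$ with $|c|\le\lambda$ if and only if $|T|/|\partial E(T)|\le\lambda$ for every nonempty $T\subseteq S$.} The ``only if'' half is elementary: given such a $c$ and a nonempty $T\subseteq S$, sum the defining inequality $h(s)\ge 1$ over $s\in T$; the contributions of edges with both endpoints in $T$ cancel, edges with exactly one endpoint in $T$ all lie in $E(S)$, and $\partial E(T)$ consists precisely of these edges, so $|T|\le\sum_{s\in T}h(s)=\sum_{e\in\partial E(T)}(\pm c(e))\le\lambda\,|\partial E(T)|$. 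The ``if'' half is the main obstacle: it is a max-flow--min-cut statement (equivalently a circulation theorem, equivalently linear-programming duality) for the network in which each vertex of $S$ is a source of one unit of heat, each edge of $E(S)$ is a conduit of capacity $\lambda$ in either direction, and the boundary edges lead to a common sink. The delicate point is to see that the relevant cut condition is captured exactly by the subsets $T\subseteq S$ and their edge-boundaries $\partial E(T)$; I would check this by a co-area (level-set) decomposition, writing a dual vector $y\ge 0$ supported on $S$ as a superposition of the indicators $\mathbf 1_{\{y>t\}}$ and comparing $\sum_{s}y(s)$ with $\sum_{e\in E(S)}|y(t(e))-y(i(e))|$, which reduces the general cut to a supremum over the level sets $\{y>t\}\subseteq S$. (This bridge is presumably the ``tight binding'' of the three measures promised in the abstract; here I only need its two halves.)

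Granting the bridge, the backward implication of Theorem 1.2 is quick. Suppose $S_1\subset S_2\subset\cdots$ exhaust $G$ and carry cooling functions $c_i$ with $|c_i|\le K$ for all $i$. By the ``only if'' half, every nonempty $T\subseteq S_i$ satisfies $|T|/|\partial E(T)|\le K$; since the $S_i$ exhaust $G$, every finite nonempty $T\subset G$ lies in some $S_i$, so $|T|/|\partial E(T)|\le K$. Hence no exhausting sequence of finite subsets can have \F\ ratios tending to $\infty$, and Theorem 1.1 forces $G$ to be nonamenable.

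For the forward implication, assume $G$ is nonamenable. First I claim $K_0:=\sup\{\,|T|/|\partial E(T)| : T\subset G\text{ finite nonempty}\,\}<\infty$. If not, then for each $n$ there is a finite $T_n$ with $|T_n|/|\partial E(T_n)|>n$. Left-translating (an automorphism of $\Gamma$, so that $|\cdot|$ and $|\partial E(\cdot)|$ are preserved) so that $T_n$ contains the identity, and using the elementary inclusion $\partial E(A\cup B)\subseteq\partial E(A)\cup\partial E(B)$, the set $\bigcup_{f\in F}fT_n$ contains any prescribed finite $F$, has cardinality at least $|T_n|$, and has edge-boundary of size at most $|F|\,|\partial E(T_n)|$, hence \F\ ratio exceeding $n/|F|$. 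Applying this repeatedly --- at the $k$th stage taking $F$ to be the union of the $k$th metric ball of $\Gamma$ with the set already constructed, and $n$ large enough that $n/|F|>k$ --- produces finite subsets $S_1\subset S_2\subset\cdots$ exhausting $G$ with \F\ ratios $\to\infty$, contradicting nonamenability via Theorem 1.1. Hence $K_0<\infty$. Now choose any exhausting sequence $S_1\subset S_2\subset\cdots$ of finite subsets of $G$ (possible since $G$ is countable). For each $i$, every nonempty $T\subseteq S_i$ has $|T|/|\partial E(T)|\le K_0$, so the ``if'' half of the bridge yields a cooling function $c_i\colon E(S_i)\to\R$ with $|c_i|\le K_0$. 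These $c_i$ have uniformly bounded cooling norms, as required.

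Two small points to record along the way: $\partial E(T)\ne\emptyset$ for every finite nonempty $T$ (because $\Gamma$ is connected and $G$ infinite), which makes the \F\ ratios meaningful and, via the ``if'' half with $\lambda=K_0$, guarantees that cooling functions exist at all; and the union bound $\partial E(A\cup B)\subseteq\partial E(A)\cup\partial E(B)$ used in the translation-and-union step. The one genuinely substantial ingredient is the max-flow--min-cut/duality argument underlying the ``if'' half of the bridge.
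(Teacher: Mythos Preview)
Your proposal is correct and follows exactly the route the paper indicates: it derives Theorem~1.2 from Theorem~1.1 via the absolute cooling theorem (your ``bridge'' is the paper's Theorem~1.3, proved there by Farkas/LP duality and, as you suggest, by max-flow--min-cut at the end of Section~\ref{sec:lp}). The paper itself does not spell out a proof of Theorem~1.2 --- it is attributed to F{\o}lner, and the equivalence with Theorem~1.1 is dismissed as ``easy'' once Theorem~1.3 is available --- so your translation-and-union construction showing that nonamenability forces $\sup_T |T|/|\partial E(T)|<\infty$ supplies a step the paper leaves to the reader.
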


Though no direct use of measure theory appears in this paper, the
reader may appreciate an orientation to \F's procedures. \F's proofs
proceed roughly as follows.

\F\ first proves his cooling theorem. 

If uniformly bounded cooling functions do not exist, then \F\
generalizes an argument of Banach which describes an outer measure on
the group and applies a version of the Hahn-Banach theorem to create
the desired measure on the group. It is historically interesting to
note that Banach's purpose in proving the Hahn-Banach theorem was to
prove that Abelian groups are amenable.

If uniformly bounded cooling functions do exist, then the Banach
argument fails; and it is fairly easy to show that a measure as in the
definition of amenable group cannot possibly exist.

The \F\ set theorem then has an easy half and a hard half. 

The easy half assumes the existence of subsets whose \F\ ratios
approach $\infty$. Since these sets have relatively small boundaries,
it is impossible to pump a lot of heat out of these sets without using
cooling functions of large norm. Hence cooling functions of uniformly
bounded norm do not exist and the group is amenable.

The hard half assumes that such sets do not exist and, by an extremely
indirect proof, \F\ proves that cooling functions with uniformly
bounded norms do exist.

It is the hard half of this proof that has most interest for us. Our
goal is to make \F's indirect proof as direct and as algorithmic as
possible. In the process, we streamline the \F\ proof.

Statement 2 of Lemma~\ref{lemma:exists} shows that cooling functions
for $S$ always exist in this context.  Using this, it is easy to show
that the two theorems are equivalent by application of our main
theorem:

\begin{theorem} Let $\G$ be a locally finite graph
with vertex set $G$ and a nonempty finite subset $S\subset G$ which
admits a cooling function.  Then $S$ admits a cooling function $c$ of
minimum possible cooling norm $N = |c|$, and
$$N = |c| = \max_{S_0\subset S} |S_0|/ |\bd E(S_0)|.$$
\end{theorem}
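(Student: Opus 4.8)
The plan is to recognize this as a linear-programming duality (max-flow/min-cut) statement in disguise, and to prove both inequalities $N \le \max_{S_0} |S_0|/|\bd E(S_0)|$ and $N \ge \max_{S_0} |S_0|/|\bd E(S_0)|$ separately, with the existence of an optimal $c$ coming out of a compactness argument once the value $N$ is pinned down.

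First I would dispense with the \emph{easy inequality} $|c| \ge |S_0|/|\bd E(S_0)|$ for every nonempty $S_0 \subseteq S$ and every cooling function $c$. Fix $S_0$, sum the defining inequality $h(s) \ge 1$ over $s \in S_0$, and observe that every edge $e \in E(S)$ with \emph{both} endpoints in $S_0$ contributes $+c(e)$ (from its terminal vertex term being subtracted, wait — rather) cancels in the telescoping sum, so only edges with exactly one endpoint in $S_0$ survive, i.e. the edges of $\bd E(S_0)$ (plus possibly edges of $\bd E(S)$ incident to $S_0$, which also lie in $\bd E(S_0)$ by definition). Each surviving edge contributes $\pm c(e)$, whence $|S_0| \le \sum_{s\in S_0} h(s) = \sum_{e\in \bd E(S_0)} \pm c(e) \le |\bd E(S_0)| \cdot |c|$. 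Taking the max over $S_0$ gives $|c| \ge \max_{S_0} |S_0|/|\bd E(S_0)|$, and hence $N \ge \max_{S_0}|S_0|/|\bd E(S_0)|$.

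The substantive direction is constructing a cooling function with $|c| \le \max_{S_0} |S_0|/|\bd E(S_0)| =: M$. Here I would set up the problem as feasibility of a linear system: does there exist $c: E(S)\to \R$ with $|c(e)| \le M$ for all $e$ and $h(s) \ge 1$ for all $s \in S$? I would apply a Farkas-type alternative (or LP duality for the problem $\min |c|$ subject to the $h(s)\ge 1$ constraints). Infeasibility of the $|c|\le M$ system would produce nonnegative multipliers $y_s \ge 1$ on the vertex constraints — which, after normalizing and using total unimodularity / integrality of the underlying incidence structure, can be taken to be $0$–$1$ valued, i.e. the indicator of some subset $S_0 \subseteq S$ — together with the observation that the best one can do against $\sum_{s\in S_0} h(s) \ge |S_0|$ using $|c(e)|\le M$ on the relevant edges is $M|\bd E(S_0)|$; infeasibility forces $M|\bd E(S_0)| < |S_0|$, contradicting the definition of $M$. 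This establishes a cooling function of norm $\le M$ exists, so $N \le M$.

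The \emph{main obstacle} I anticipate is the integrality step: a priori the LP dual optimum gives real multipliers $y_s$, and one must argue they may be replaced by an indicator vector of a genuine subset $S_0$ (so that $|\bd E(S_0)|$, an honest combinatorial quantity, appears). This is where the structure of the edge–vertex incidence — each edge touching at most two vertices of $S$ — must be exploited, via a thresholding/level-set argument on the $y_s$ or via the total unimodularity of the incidence matrix; I would expect to prove a small lemma saying that the minimum of $|S_0|/|\bd E(S_0)|$ is attained and that the relevant LP has an integral optimal solution. A secondary, milder point is \emph{existence} of an optimal $c$: once $N$ is identified with $M$, the feasible set $\{c : |c|\le M,\ h(s)\ge 1\}$ is a nonempty closed bounded subset of $\R^{E(S)}$ (finite-dimensional since $\G$ is locally finite and $S$ is finite), hence compact, and $c \equiv 0$ on it — any point of it is an optimal cooling function. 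I would also want to remark at the outset why $\bd E(S) \ne \emptyset$: the sum $\sum_{s\in S} h(s) \ge |S| > 0$ cannot vanish, yet it equals $\sum_{e\in\bd E(S)} c(e)$, forcing $\bd E(S)\ne\emptyset$; this also reconciles with the parenthetical remark in the statement of the \F\ ratio.
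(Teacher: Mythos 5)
Your plan has the right overall shape, and it lines up with the paper's two proofs (a Farkas/F{\o}lner argument in Section~2, an LP-duality argument in Section~3) at every step except one: both the paper and you must, at the crucial moment, extract a genuine subset $S_0\subseteq S$ from a real-valued nonnegative dual certificate $y:S\to[0,\infty)$ satisfying $\sum_{e\in E(S)}\lvert y_{i(e)}-y_{t(e)}\rvert < (1/M)\sum_{s\in S}y_s$ (with $y\equiv 0$ outside $S$). That is the crux, and here you offer two candidate tools without committing. One of them, \emph{total unimodularity of the incidence matrix}, I do not think applies as stated: the Farkas certificate is an arbitrary nonnegative vector in a cone, not an extreme point of an integral polyhedron, and the quantity to be controlled is a \emph{ratio} $\sum\lvert y_{i(e)}-y_{t(e)}\rvert\big/\sum y_s$ rather than a linear objective over a fixed TU system; you would have to homogenize and then argue about vertices of the resulting polytope, which is in effect the paper's Section~3 argument that a piecewise-linear convex function on the simplex $\Delta(S)$ attains its minimum at a vertex of the first barycentric subdivision, those vertices being precisely the normalized indicator functions of subsets. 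Your other suggestion, the \emph{level-set (coarea) thresholding}, is the one that works and is in fact cleaner than either of the paper's arguments: writing $S_0(t)=\{s\in S: y_s>t\}$, one has $\sum_s y_s=\int_0^\infty\lvert S_0(t)\rvert\,dt$ and $\sum_{e}\lvert y_{i(e)}-y_{t(e)}\rvert=\int_0^\infty\lvert\partial E(S_0(t))\rvert\,dt$, so the hypothesized strict inequality forces $\lvert\partial E(S_0(t))\rvert<(1/M)\lvert S_0(t)\rvert$ for some $t$, i.e. a subset with F{\o}lner ratio exceeding $M$, a contradiction. The paper does not use this; its Section~2 proof instead differentiates the convex functional $A(f)=\sum_e\lvert\tilde f(i(e))-\tilde f(t(e))\rvert$ along a path $f_\lambda=(f-\lambda g)/(1-\lambda)$ toward the indicator $g$ of the top level set of a minimizer $f$, concluding $A(g)\le A(f)$ from the sign of the derivative at $\lambda=0$. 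Your easy direction (summing the vertex inequalities over $S_0$ and bounding $\sum_{e\in\partial E(S_0)}\pm c(e)$ by $\lvert\partial E(S_0)\rvert\cdot\lvert c\rvert$) matches the paper's Lemma~2.2 exactly, and your compactness remark for existence of an optimal $c$ and your observation that $\partial E(S)\neq\emptyset$ are both correct. One small slip: the Farkas multipliers on the vertex constraints are only guaranteed nonnegative, not $\ge 1$ as you wrote; the normalization $\sum_s y_s=1$ comes afterward. In summary: commit to the coarea/level-set argument for the integrality step, drop the total-unimodularity appeal, and your proof goes through; it is then a somewhat different (and arguably more transparent) route than the paper's calculus-of-variations or barycentric-vertex arguments, at the cost of not generalizing as immediately to the weighted $h_0$ version the paper actually proves.
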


That is, the minimum cooling norm is equal to the maximum \F\ ratio of
subsets $S_0$ of $S$, taking the F{\o}lner ratio of the empty set to
be 0.  We call such a cooling function with minimum cooling norm an
\emph{optimal cooling function}.  This absolute cooling theorem will
be a corollary to the cooling theorem of Section~\ref{sec:cooling}.

We prove the cooling theorem in Section~\ref{sec:cooling} by refining
F{\o}lner's original arguments.  In Section~\ref{sec:lp} we interpret
this cooling theorem in terms of linear programming and give another
proof using standard results of linear programming.
Section~\ref{sec:lp} concludes with a brief derivation of statement 2
of the cooling theorem using the max flow min cut theorem.  The linear
programming approach in Section~\ref{sec:lp} leads to an algorithm,
presented in Section~\ref{sec:algo}, which is a modification of the
standard simplex algorithm.  It finds an optimal cooling function $c$
and a F{\o}lner-optimal subset $S_0\subset S$, that is, a subset
$S_0\subset S$ whose F{\o}lner ratio is the maximum possible.
Moreover, the subset $S_0$ which the algorithm finds has the property
that if $S'_0$ is a F{\o}lner-optimal subset of $S$, then $S'_0\subset
S$.  In other words, there is a maximal such subset of $S$ with
maximum F{\o}lner ratio, and the algorithm finds it.

After Section~\ref{sec:algo}, we describe our partial results toward
another algorithmic process for finding the minimum possible cooling
norm $N$, finding an optimal cooling function, and finding a
F{\o}lner-optimal subset $S_0 \subset S$. The main ingredient is the
relative cooling theorem, another corollary to the cooling theorem of
Section~\ref{sec:cooling} (see Section~\ref{sec:relative}). Then we
describe the process of peeling layers away from an arbitrary set to
find a subset of maximum possible \F\ ratio
(Section~\ref{sec:peelings}). And finally we show how to build up
optimal cooling functions layer by layer from known cooling functions
on subsets (Section~\ref{sec:building}).  

It can be challenging to determine whether or not a countable group is
amenable. In particular, despite concerted efforts for over $30$ years
it is still not known whether or not Thompson's group $F$ is
amenable. And while Bartholdi and Vir\'ag showed in \cite{BV05}
that the Basilica group is amenable, their proof is via random walks
and one still doesn't know how to construct a F\o lner sequence for
it. Our hope is that the results of this paper will serve in exploring
amenability for countable groups.

{\bf Permanent setting for the remainder of the paper:} $\G$ is a
locally finite graph; $G$ is the set of vertices of $\G$;
$S$ is a nonempty finite subset of $G$; $E(S)$ is the set of edges of $\G$
having at least one vertex in $S$, and $\bd E(S) \subset E(S)$ is the
set of edges of $\G$ having exactly one vertex in $S$.

{\bf Always} we assume that the set $S$ admits a cooling function.

{\bf Edge orientation:} For each edge $e\in E(S)$, we choose one of
the two possible orientations of $e$, so that $e$ has an \emph{initial
vertex} $i(e)$ and a \emph{terminal vertex} $t(e)$. If $e\in \bd
E(S)$, then we choose that orientation which has $i(e) \in S$ and
$t(e) \notin S$.

{\bf We always assume given,} as initial data, a function $h_0:S \to
(0,\infty)$. In most applications, this function will be identically
equal to $1$, but in the relative cooling theorem it is important that
we be allowed to modify this initial data.

\section{The cooling theorem}\label{sec:cooling}

{\bf Cooling functions:} A \emph{cooling function rel} $h_0$ is a
function $c:E(S) \to \R$ such that
$$\forall \, s\in S,\quad h(s) \equiv \sum_{i(e) = s}c(e) - \sum_{t(e)
= s} c(e) \ge h_0(s).$$We may interpret $c(e)$ as the heat pumped
along edge $e$ from initial vertex $i(e)$ to terminal vertex
$t(e)$. The function $h(s)$ is then the net heat loss at the vertex
$s$. The absolute cooling theorem of the introduction deals with the
special case where $h_0$ is constant and equal to $1$. The value
$h_0(s)$ gives a lower bound on the amount of heat to be pumped out of
vertex $s$. The sum $H(S) =\sum_{s\in S}h_0(s)$ gives the minimal
amount of heat that a cooling function must pump out of $S$ through
the boundary edges $\bd E(S)$ of $S$.

Recall that we always assume that $S$ admits a cooling function.  The
next lemma provides an equivalent condition.  We need two
definitions for this.  Let $T$ be a finite subset of $G$.  A connected
component of $T$ consists of all the vertices in a connected component
of the graph $\cup \{e:e\in E(T)\setminus \partial E(T)\}$.  To say
that a connected component $T_0$ of $T$ has a nonempty boundary, we
mean that $\partial E(T_0)\ne \emptyset$.

\begin{lemma}\label{lemma:exists}  (1)A nonempty finite subset
$T$ of $G$ admits a cooling function rel $h_0$ if and only if every
connected component of $T$ has a nonempty boundary.\par

(2) If $\G$ is infinite and connected, then every nonempty finite 
subset $T\subset G$ admits a cooling function rel $h_0$.
\end{lemma}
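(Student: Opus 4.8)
The plan is to prove statement (1) first and then derive (2) as an easy consequence. For statement (1), I would prove the two directions separately.

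For the "only if" direction of (1): suppose some connected component $T_0$ of $T$ has empty boundary, i.e. $\partial E(T_0) = \emptyset$, and suppose for contradiction that $c: E(T) \to \R$ is a cooling function rel $h_0$. The key observation is the telescoping identity: if I sum $h(s)$ over all $s \in T_0$, every edge $e$ with both endpoints in $T_0$ contributes $c(e)$ once (at $i(e)$) and $-c(e)$ once (at $t(e)$), so it cancels; and since $\partial E(T_0) = \emptyset$, there are no edges with exactly one endpoint in $T_0$. The only surviving terms would come from edges of $E(T)$ with one endpoint in $T_0$ and the other in $T \setminus T_0$, but by the definition of connected component such edges lie in $\partial E(T)$ and are not part of the subgraph defining $T_0$'s component; I need to check carefully that no edge of $E(T)$ joins $T_0$ to another vertex of $T$ outside $\partial E(T_0)$ — this follows because such an edge would be in $E(T) \setminus \partial E(T)$ and would connect $T_0$ to a larger connected set, contradicting that $T_0$ is a full connected component. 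Hence $\sum_{s \in T_0} h(s) = 0$, contradicting $\sum_{s\in T_0} h(s) \ge \sum_{s \in T_0} h_0(s) > 0$ since $h_0 > 0$ and $T_0 \ne \emptyset$.

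For the "if" direction of (1): assume every connected component of $T$ has nonempty boundary. Since a cooling function can be built independently on each connected component (the edge sets $E(T_0)$ for distinct components $T_0$ are disjoint, as are the relevant vertex constraints), I may assume $T$ itself is connected with $\partial E(T) \ne \emptyset$. Now I would argue constructively: pick a boundary edge $e_0 \in \partial E(T)$ with $i(e_0) = v_0 \in T$. Since $T$ is connected, choose a spanning tree of the component subgraph rooted at $v_0$; orient the tree edges toward $v_0$. For each vertex $s \ne v_0$, route a flow of the appropriate magnitude along the unique tree path from $s$ toward $v_0$, accumulating at $v_0$, and then dump all the accumulated heat (plus $h_0(v_0)$) out through $e_0$. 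Concretely, on each tree edge set $c(e)$ to (a sign times) the sum of $h_0(s)$ over all vertices $s$ in the subtree "above" that edge, set $c(e_0)$ equal to $H(T)$ (with the sign dictated by the orientation convention $i(e_0) \in T$), and set $c(e) = 0$ on all non-tree edges of $E(T)$. A direct check of the defining inequality at each $s$ — the net outflow at $s$ equals exactly $h_0(s)$ — completes the construction; one must be slightly careful about the orientation signs, since each edge of $\G$ was given a fixed orientation that may or may not agree with the direction of flow, but this only changes $c(e)$ by a sign.

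For statement (2): if $\G$ is infinite and connected and $T \subset G$ is finite and nonempty, I claim every connected component of $T$ automatically has nonempty boundary. Indeed, let $T_0$ be such a component; since $\G$ is connected and infinite while $T_0$ is finite, there is a vertex of $G$ outside $T_0$, hence a path in $\G$ from $T_0$ leaving $T_0$, hence an edge $e$ with exactly one endpoint in $T_0$. If that endpoint is the only endpoint of $e$ lying in $T$, then $e \in \partial E(T_0)$ directly; otherwise the other endpoint lies in $T \setminus T_0$, but such an edge would be in $E(T) \setminus \partial E(T)$ and would merge $T_0$ with another vertex of $T$ in the component subgraph, contradicting maximality of $T_0$. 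So $\partial E(T_0) \ne \emptyset$, and statement (1) applies. The main obstacle I anticipate is purely bookkeeping: getting the orientation conventions and the definition of "connected component of $T$" (which uses the subgraph $\cup\{e : e \in E(T) \setminus \partial E(T)\}$, deliberately excluding boundary edges) to interact correctly, especially the subtle point that an edge of $E(T)$ between two vertices of $T$ might still be a boundary edge of a component if — wait, it cannot, since an edge with both endpoints in $T$ is by definition not in $\partial E(T)$; the real care is needed for edges with one endpoint in $T$ and one outside, which are boundary edges of whichever component contains their $T$-endpoint. Making this precise is the heart of the argument, but it is elementary graph theory once set up correctly.
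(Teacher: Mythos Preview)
Your proof is correct and follows essentially the same approach as the paper's: the ``only if'' direction of (1) uses the telescoping sum $\sum_{s\in T_0} h(s)=0$ exactly as the paper does, and statement (2) is derived identically by finding a path from $T_0$ to the complement of $T$. The only minor difference is in the ``if'' direction of (1): the paper routes, for each $x\in T$ separately, a heat-flow path carrying $h_0(x)$ units out to some boundary edge and then sums these paths, whereas you consolidate this into a single spanning-tree flow dumping everything through one boundary edge per component; both constructions are valid and the idea is the same.
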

  \begin{proof} Let $T$ be a nonempty finite subset of $G$.

We first prove the forward implication of statement 1.  Suppose that
$T$ admits a cooling function rel $h_0$.  Let $T_0$ be a connected
component of $T$.  Then $H(T_0)>0$.  Edges of $E(T_0)\setminus
\partial E(T_0)$ conduct no heat from $T_0$.  Thus $\partial E(T_0)\ne
\emptyset$.  This proves the forward implication of statement 1.

For the backward implication, suppose that every connected component
of $T$ has a nonempty boundary.  Then for each $x\in T$ there exists
an edge path $e_1=(x_0,x_1)$, $e_2 = (x_1,x_2)$, $\ldots$, $e_n=
(x_{n-1},x_n)$ with $x = x_0$, with $x_i \in T$ for $i < n$, and with
$e_n\in \bd E(T)$, so that $x_n \notin T$. Given $x$, transport
$h_0(x)$ units of heat from $x = x_0$ to $x_1$, the same from $x_1$ to
$x_2$, etc., until $h_0(x)$ units have been, in effect, transported
from $x_0$ to $x_n$ with no net gain at any intermediate vertex. That
is, viewing $e_i$ as oriented from $x_{i-1}$ to $x_i$, we define a
function from $E(T)$ to $\R$ so that its value at $e_1,\dotsc,e_n$ is
$h_0(x)$ and 0 otherwise.  Call this function a heat-flow path. Adding
one heat-flow path for each $x \in T$ results in a function $c:E(T)\to
\R$ satisfying $h(x) = h_0(x)$ for each $x \in T$. Thus $c$ is a
cooling function rel $h_0$.

This proves statement 1 of Lemma~\ref{lemma:exists}.

To prove statement 2, suppose that $\G$ is infinite and connected.
Let $x\in T$.  Because $\G$ is infinite and $T$ is finite, there
exists $y\in G\setminus T$.  Because $\G$ is connected, there exists
an edge path in $\G$ joining $x$ and $y$.  Some edge in this edge path
is in $\partial E(T)$.  It follows that every connected component of
the boundary of $T$ is nonempty.  Statement 2 now follows from
statement 1.

This proves Lemma~\ref{lemma:exists}.
\end{proof}

{\bf \F\ ratio:} Because we always assume that $S$ admits a cooling
function rel $h_0$, Lemma~\ref{lemma:exists} implies that $\partial
E(S)\ne \emptyset$.  The \emph{\F\ ratio} $FR(S) =H(S)/|\bd E(S)|$
measures the average amount of heat that must be pumped out of $S$
along each edge $e\in \bd E(S)$ by any cooling function rel $h_0$.
Since $S$ admits a cooling function rel $h_0$, so does every nonempty
subset $S_0$ of $S$.  So $FR(S_0)$ exists for each $S_0$.  We set
$FR(\emptyset)=0$.

\begin{lemma}\label{lemma:lowerbd}  If $S_0\subset S$, then
$FR(S_0)$ is a lower bound on the norm of every cooling function on
$S$ rel $h_0$.
\end{lemma}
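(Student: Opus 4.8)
The plan is to count, in two different ways, the total heat that a cooling function rel $h_0$ must push out of the subset $S_0$, and then compare this with the number of edges available to carry that heat. Let $c$ be any cooling function on $S$ rel $h_0$, and let $S_0\subset S$. If $S_0=\emptyset$ the claim is trivial since $FR(\emptyset)=0$, so assume $S_0$ is nonempty. First I would sum the defining inequality $h(s)\ge h_0(s)$ over all $s\in S_0$, obtaining
\[
\sum_{s\in S_0} h_0(s) \;\le\; \sum_{s\in S_0} h(s)
\;=\; \sum_{s\in S_0}\Bigl(\sum_{i(e)=s} c(e) - \sum_{t(e)=s} c(e)\Bigr).
\]
The left side is exactly $H(S_0)$.

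Next I would rewrite the right-hand side by examining the contribution of each edge $e\in E(S_0)$. An edge with both endpoints in $S_0$ contributes $+c(e)$ (from its initial vertex) and $-c(e)$ (from its terminal vertex), so it cancels entirely. An edge $e\in\partial E(S_0)$ has exactly one endpoint in $S_0$; by our orientation convention this endpoint is $i(e)$, so such an edge contributes exactly $+c(e)$. (One should note that $\partial E(S_0)\subset E(S)$ and that the orientation of $e$ induced as a boundary edge of $S_0$ agrees with its fixed orientation in $E(S)$ precisely when $i(e)\in S_0$; the only subtlety is to check that an edge of $\partial E(S_0)$ whose $S$-fixed orientation points \emph{into} $S_0$ cannot occur, since by construction boundary edges of $S_0$ are oriented with initial vertex in $S_0$, and for the telescoping count what matters is that each such edge appears once with sign determined by whether its endpoint-in-$S_0$ is initial or terminal — summing carefully handles both cases.) The upshot of this bookkeeping is the identity
\[
\sum_{s\in S_0} h(s) \;=\; \sum_{e\in\partial E(S_0)} \pm c(e),
\]
and hence $H(S_0)\le \sum_{e\in\partial E(S_0)}|c(e)| \le |\partial E(S_0)|\cdot|c|$. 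Dividing by $|\partial E(S_0)|$, which is positive because $S_0$ admits a cooling function rel $h_0$ (Lemma~\ref{lemma:exists}), gives $FR(S_0)=H(S_0)/|\partial E(S_0)|\le |c|$, as required.

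The only real obstacle is the edge-counting step: one must be careful that in the telescoping sum over vertices of $S_0$, every edge with both endpoints in $S_0$ truly cancels (this needs that such an edge lies in $E(S_0)\setminus\partial E(S_0)$ and that its two endpoint-contributions have opposite signs, which is immediate from the definition of $h(s)$), and that each boundary edge of $S_0$ is counted exactly once with a single sign. Edges of $E(S_0)$ with \emph{no} endpoint in $S_0$ do not arise, since $E(S_0)$ consists only of edges with at least one vertex in $S_0$. Once this accounting is pinned down, the estimate $|\sum_{e\in\partial E(S_0)}\pm c(e)|\le|\partial E(S_0)|\,|c|$ is routine, and the lemma follows.
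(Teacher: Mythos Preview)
Your argument is correct and is exactly the explicit version of the paper's one-sentence heuristic proof: sum the vertex inequalities over $S_0$, telescope so that interior edges cancel, and bound the surviving boundary-edge sum by $|\partial E(S_0)|\cdot|c|$. One small exposition issue: your parenthetical claims that edges of $\partial E(S_0)$ with $S$-fixed orientation pointing \emph{into} $S_0$ ``cannot occur,'' but in fact they can (an edge between $S_0$ and $S\setminus S_0$ has arbitrary orientation); fortunately you immediately abandon this false claim and write the correct formula $\sum_{e\in\partial E(S_0)}\pm c(e)$, so the estimate goes through.
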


\begin{proof} Since a cooling function must cool each subset $S_0
\subset S$, since the heat in $S_0$ must be carried out of $S_0$ along
the boundary edges of $S_0$, and since at least one boundary edge must
carry at least the average required per boundary edge, it follows
immediately that the minimum cooling norm $N = |c|$ of a cooling
function must be at least as large as the \F\ ratio
$FR(S_0)$. 
\end{proof}

Our absolute cooling theorem will show that $N$ is exactly equal to
the maximum of the F{\o}lner ratios which occur in this lemma.

{\bf The simplex $\Zd(S)$ with vertices $S$ and the functions $\t
f.$:} Let $\Zd(S)$ denote the abstract simplex whose vertices are the
elements of the set $S$. An element of $\Zd(S)$ is therefore a
function $f:S \to [0,1]$ such that $\sum_{s\in S}f(s) = 1$. With each
element $f\in \Zd(S)$ we associate a function $\t f.:V\to [0,\infty)$,
where
$$V=\cup_{e \in E(S)}\bd e,$$
$$\forall\, v\in S,\quad\t f.(v) = f(v)/h_0(v), \hbox{ and }$$
$$\forall\, v\in V \setminus S, \quad\t f.(v) = 0.$$

\begin{theorem}[Cooling]\label{thm:cooling} (1) If $N$ is the minimum
possible norm of a cooling function for $S$ rel $h_0$, then the
reciprocal $1/N$ is given by the minimum value of a convex function
$A(f)$ on the simplex $\Zd(S)$ as follows:
$$1/N = \min_{f\in \Zd(S)}A(f),$$ where $\t f.$ is the modification of
$f$ defined above and $$A(f) =\sum_{e\in E(S)}|\t f.(i(e)) - \t
f.(t(e))|.$$

(2) The minimum possible norm $N$ is given by the maximum \F\ ratio:
$$N = \max_{S_0 \subset S}FR(S_0).$$

(3) If $FR(S_0)$ realizes the maximum in (2), then the function $f\in
\Zd(S)$ whose modification $\t f.$ is constant on $S_0$ and $0$ on
$V\setminus S_0$ realizes the minimum in (1).

\end{theorem}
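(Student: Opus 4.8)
The plan is to isolate the single genuinely hard inequality and to obtain everything else by soft arguments. First, all three optima in the statement are attained: $\min_{f\in\Zd(S)}A(f)$ since $\Zd(S)$ is compact and $A$ continuous; $\max_{S_0\subset S}FR(S_0)$ since $S$ has only finitely many subsets; and the minimum cooling norm $N$ since the cooling functions rel $h_0$ form a nonempty closed convex subset of $\R^{E(S)}$ on which $c\mapsto|c|$ is continuous with compact sublevel sets. Moreover $A$ is convex, because $f\mapsto\tilde f$ is linear, so each $|\tilde f(i(e))-\tilde f(t(e))|$ is convex and $A$ is their sum. The easy half of (1) is a heat‑accounting estimate: for any cooling function $c$ rel $h_0$ and any $f\in\Zd(S)$, using $\tilde f\ge 0$, $h(s)\ge h_0(s)$, $\tilde f(v)h_0(v)=f(v)$ on $S$, $\tilde f\equiv 0$ off $S$, and reorganizing $\sum_s\tilde f(s)h(s)$ by edges,
\begin{align*}
1=\sum_{s\in S}\tilde f(s)h_0(s)&\le\sum_{s\in S}\tilde f(s)h(s)\\
&=\sum_{e\in E(S)}c(e)\bigl(\tilde f(i(e))-\tilde f(t(e))\bigr)\ \le\ |c|\,A(f).
\end{align*}
Hence $|c|\,A(f)\ge 1$ always; taking $c$ of minimal norm gives $A(f)\ge 1/N$ for all $f$, i.e. $\min_f A(f)\ge 1/N$.

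The heart of the soft part is the identity $\min_{f\in\Zd(S)}A(f)=1/\max_{S_0\subset S}FR(S_0)$, which I would prove by a layer‑cake (``coarea'') argument. Let $f^\ast$ minimize $A$ and put $S_t=\{v\in S:\tilde f^\ast(v)>t\}$ for $t\ge 0$. Writing $\tilde f^\ast(v)=\int_0^\infty\mathbf 1[v\in S_t]\,dt$ and $|\tilde f^\ast(i(e))-\tilde f^\ast(t(e))|=\int_0^\infty\mathbf 1[e\in\bd E(S_t)]\,dt$ gives $\int_0^\infty H(S_t)\,dt=1$ and $\int_0^\infty|\bd E(S_t)|\,dt=A(f^\ast)$. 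Since the $S_t$ realize only finitely many sets, $m:=\min\{|\bd E(S_t)|/H(S_t):S_t\ne\emptyset\}$ is attained, $|\bd E(S_t)|\ge m\,H(S_t)$ for every $t$, and integrating yields $A(f^\ast)\ge m$; if $S_0$ realizes $m$ then $FR(S_0)=1/m\ge 1/A(f^\ast)$, so $\max_{S_0}FR(S_0)\ge 1/\min_f A(f)$. Conversely, if $S_1$ maximizes $FR$ and $f$ is defined by $f(v)=h_0(v)/H(S_1)$ on $S_1$ and $f(v)=0$ off $S_1$, then $f\in\Zd(S)$ and $\tilde f$ equals the constant $1/H(S_1)$ on $S_1$, $0$ on $V\setminus S_1$, so $A(f)=|\bd E(S_1)|/H(S_1)=1/FR(S_1)$ and $\min_f A(f)\le 1/\max_{S_0}FR(S_0)$. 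This proves the identity, and at the same time statement (3): for any $S_0$ realizing the maximum in (2), this same $f$ realizes the minimum in (1), and its $\tilde f$ is constant on $S_0$ and $0$ on $V\setminus S_0$.

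It now suffices to prove $N=\max_{S_0}FR(S_0)$; combined with the identity this gives (1) and (2). Lemma~\ref{lemma:lowerbd} already supplies $N\ge\max_{S_0}FR(S_0)$, so the whole remaining task — and the main obstacle — is the single inequality $N\le R$, where $R:=\max_{S_0}FR(S_0)$: producing a cooling function rel $h_0$ of norm at most $R$. I would argue by induction on $|S|$. No edge of $\G$ joins two vertices of $S$ lying in different components of $S$, and by the mediant inequality $R$ is attained inside one component, so we may assume $S$ connected. If some nonempty proper $S_1\subsetneq S$ has $FR(S_1)=R$, pick such an $S_1$ maximal, build a norm‑$R$ cooling function $c_1$ on $E(S_1)$ by induction, modify the initial data on $S_2:=S\setminus S_1$ so as to absorb the heat $c_1$ delivers there, and apply the induction to $S_2$; that the modified \F\ ratios of subsets $S_0\subset S_2$ stay $\le R$ follows from a short edge count — with $a,b,d$ the numbers of edges from $S_0$ to $S_1$, from $S_0$ to the exterior of $S_0\cup S_1$, and from $S_1$ to that exterior, the relations $FR(S_1)=R$ and $FR(S_0\cup S_1)\le R$ force $H(S_0)\le R(b-a)\le Rb$, exactly the bound needed — after which $c_1$ and the cooling function found on $S_2$ splice to a cooling function on $E(S)$ of norm $\le R$.

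The delicate points, which I expect to be where the real work lies, are two. First, keeping the modified initial data on $S_2$ strictly positive: this is exactly why the relative cooling theorem permits the data $h_0$ to be varied, and it may require arranging the inductive cooling functions to be nonnegative on boundary edges. Second, the base case in which $S$ is connected and is the \emph{only} \F-optimal subset: there every boundary edge of $S$ is forced to carry exactly $R$, so one needs a feasible flow on the interior edges of $S$, whose existence is guaranteed precisely by the strict inequalities $FR(S_0)<R$ for proper $S_0$ — and this flow‑feasibility step is the ``extremely indirect'' part of \F's original proof. (Alternatively, the inequality $N\le R$ also drops out of the max‑flow--min‑cut / linear‑programming treatment of Section~\ref{sec:lp}.) Granting $N\le R$, we conclude $N=R=\max_{S_0}FR(S_0)$ and $1/N=\min_f A(f)$, completing (1), (2), and (3).
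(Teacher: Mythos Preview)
Your layer-cake argument for the identity $\min_{f}A(f)=1/\max_{S_0}FR(S_0)$ is correct and is a genuinely different route from the paper's: the paper takes a minimizer $f$ of $A$, lets $S_0$ be the set where $\tilde f$ attains its maximum, builds the associated $g$, and shows $A(g)\le A(f)$ by a calculus-of-variations computation, differentiating $A\bigl((f-\zl g)/(1-\zl)\bigr)$ at $\zl=0$ and checking that compatible signs $\zep(e)=\pm1$ can be chosen for all three absolute values. Your coarea argument is shorter and more conceptual. Your heat-accounting inequality $|c|\,A(f)\ge 1$ is also correct and gives the easy direction of~(1).

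The genuine gap is the hard inequality $N\le R:=\max_{S_0}FR(S_0)$. Your induction does not close it: the ``base case'' in which $S$ is connected and is its own unique F{\o}lner-optimal subset is not a smaller or degenerate instance --- it is precisely the crux of the problem, and you explicitly punt on it, ending with ``Granting $N\le R$.'' The paper resolves this via Farkas' lemma: encoding the existence of a cooling function of norm $\le N$ as the linear system $h(s)\ge h_0(s)$, $y_e\ge -N$, $-y_e\ge -N$, Farkas says that infeasibility yields nonnegative multipliers $\za(s),\zb(e),\zg(e)$ which, after normalizing $\sum_s\za(s)h_0(s)=1$ and replacing $\{\zb(e),\zg(e)\}$ by $\{|\zb(e)-\zg(e)|,0\}$, assemble into an $f\in\Zd(S)$ with $A(f)<1/N$. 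That single stroke gives both directions of~(1); your argument reproduces only the trivial half. Invoking Section~\ref{sec:lp} or max-flow--min-cut parenthetically is a correct remark but not part of your proof, and the peel-and-extend scheme you sketch is essentially the content of Sections~\ref{sec:relative}--\ref{sec:peelings}, where it \emph{uses} the cooling theorem rather than establishing it.
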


\begin{proof} (1): The verification that $A$ is a convex function is
left to the reader.  Since the function $A:\Zd(S) \to (0,\infty)$ is
positive and continuous on the compact set $\Zd(S)$, there is
certainly a function in $\Zd(S)$ which gives a positive minimum for
$A$.

For the rest, we follow \F's argument. We fix a candidate
norm $N$ for a cooling function rel $h_0$. We introduce a variable
$y_e$ for each $e\in E(S)$ and note that a cooling function of
norm $\le N$ is an assignment of a real value to each of the variables
$y_e$ in such a way that the following inequalities are satisfied:
$$\forall\, s\in S, \quad h(s) = \sum_{i(e)=s}y_e - \sum_{t(e)=s}y_e
\ge h_0(s); \hbox{ and }$$
$$\forall\,e\in E(S), \quad y_e \ge -N \quad \hbox{and }\quad -y_e \ge
-N.$$The first of these inequalities says that each element of $S$ is
cooled by the required amount. The last two inequalities say that the
norm of the resulting cooling function is $\le N$.

We assume that $N$ is too small, so that no cooling function exists,
and conclude from Farkas' lemma, Lemma~\ref{lemma:farkas}, that there
exist nonnegative numbers $\za(s)$, $\zb(e)$, and $\zg(e)$ satisfying
the following two conditions:
  \begin{equation*}\linnum\label{lin:line5}
\sum_{s\in S}\za(s)h(s) +\sum_{e\in E(S)}(\zb(e)-\zg(e))y_e \equiv 0,
  \end{equation*}
and
  \begin{equation*}\linnum\label{lin:line6}
\sum_{s\in S} \za(s)h_0(s) \,\,-\,\, N\cdot\sum_{e\in
E(S)}(\zb(e) + \zg(e)) > 0.
  \end{equation*}
We cannot have $\za(s)$ identically $0$, for then the left-hand side
of line~\ref{lin:line6} would be $\le 0$, a contradiction. We may
therefore scale the coefficients $\za(s)$, $\zb(e)$, and $\zg(e)$ so
that $\sum \za(s)h_0(s) = 1$. That is, we may assume that $f =\za\cdot
h_0 \in \Zd(S)$. We may assume the function $\za$ is extended to the
vertices of $\bd E(S)$ not in $S$ so as to be $0$ on vertices not in
$S$. It follows that this extended $\za$ is our standard modification
$\t f.$ of the function $f$.

We normalize $\zb(e)$ and $\zg(e)$ as follows: We replace the larger
of $\zb(e)$ and $\zg(e)$ by the nonnegative difference $|\zb(e) -
\zg(e)|$ and the other by $0$. This has no effect at all on the
condition in line~\ref{lin:line5}, and it can only increase the sum in
line~\ref{lin:line6}; so the two conditions are still satisfied.

We claim that $\zb(e) + \zg(e) = |\za(i(e)) - \za(t(e))| = |\t
f.(i(e)) - \t f.(t(e))|$. Indeed, the coefficient of $y_e$ in
line~\ref{lin:line5} is 0, and so
  \begin{equation*}\linnum\label{lin:line7}
\za(i(e)) - \za(t(e)) + \zb(e) - \zg(e)=0.
  \end{equation*}
Because of our normalization, one of $\zb(e)$ and $\zg(e)$ is $0$, and
so $\left|\zb(e)+\zg(e)\right|=\left|\zb(e)-\zg(e)\right|$. Hence
$$\zb(e)+\zg(e) = |\zb(e)+\zg(e)|=|\zb(e)-\zg(e)|=|\za(i(e)) -
\za(t(e))|,$$as desired.

With $\sum \za(s)h_0(s) = 1$ and $\zb(e) + \zg(e) = |\za(i(e)) -
\za(t(e))|= |\t f.(i(e)) - \t f.(t(e))|$, we find that
line~\ref{lin:line6} is equivalent to
$$A(f) =\sum_{e\in E(S)} |\t f.(i(e)) - \t f.(t(e))| < 1/N.$$Thus cooling
functions of norm $N$ exist if $1/N \le
\min_{f\in\Zd(S)}A(f)$. 

The converse of this statement can be proved by assuming that there
exists $f\in \Zd(S)$ with $A(f)<1/N$ and reversing this argument.  The
function $f$ determines the numbers $\za(s)$ as above.  We choose the
numbers $\zb(e)$ and $\zg(e)$ as above so that one of them is 0 and so
that line~\ref{lin:line7} is satisfied.  We conclude that
lines~\ref{lin:line5} and \ref{lin:line6} hold.  Now Farkas' lemma
implies that there is no cooling function with norm $N$.  So cooling
functions with norm $N$ exist if and only if $1/N\le \min_{f\in
\Zd(S)}A(f)$.

This proves (1).

(2) and (3): Lemma~\ref{lemma:lowerbd} shows that the minimum possible
cooling norm $N$ for $S$ must be at least as large as
$\max_{S_0\subset S} FR(S_0)$. Our remaining task is to find a subset
$S_0$ of maximum \F\ ratio and to show that the function $g\in \Zd(S)$
associated with $S_0$ as in (3) does in fact realize the minimum $A(g)
= \min_{f\in \Zd(S)} A(f)$ in (1).

Suppose that $A$ assumes its minimum at $f\in \Zd(S)$.  We let
$S_0\subset S$ denote the set of points in $S$ at which the
corresponding function $\t f.$ takes its (positive) maximum. We shall
see that this set $S_0$ satisfies the required conditions.

As in (3), we let $\t g.$ be constant and positive, equal to $\zd$, on
$S_0$, and constant, equal to $0$, on the complement of $S_0$. The
corresponding element $g\in \Zd(S)$ therefore satisfies the equation
$$1 =\sum_{s\in S} g(s) = \sum_{s\in S_0} \zd\cdot h_0(s) = \zd\cdot
H(S_0).$$ Thus $\zd = 1/H(S_0)$.  It is easy to calculate $A(g)$ since
$|\t g.(i(e)) - \t g.(t(e))|$ is only nonzero for $e\in \bd
E(S_0)$. Therefore,
  \begin{equation*}\linnum\label{lin:Ag}
A(g) = \sum_{e\in E(S)}|\t g.(i(e)) - \t g.(t(e))| = \zd\cdot |\bd E(S_0)| =
\fr |\bd E(S_0)|.H(S_0). = \fr 1.FR(S_0)..
  \end{equation*}
We know that $A(f)$ realizes the minimum in (1). We complete the proof
by showing that $A(f) \ge A(g)$ so that the latter also realizes the
minimum in (1).

We apply the methods of the calculus of variations. We consider the
functions $f_\zl\in \Zd(S)$ defined as follows:
$$f_\zl = \fr f - \zl g.1-\zl., \hbox{ for }\zl\text{ very small in }
[0,1).$$Note that we are simply reducing some of the positive values
of $f$ by a little bit and are then scaling so as to remain in
$\Zd(S)$. The following lemma completes the proof.

\begin{lemma}$$0 \le \fr d(A(f_\zl)).d\zl.\bigg|_{\zl = 0} =  A(f) -A(g) .$$
\end{lemma}

\begin{proof} The derivative is certainly $\ge 0$ since $A(f)$ minimizes $A$. 

It is an easy matter to take the derivative of $|x|$ if we know which
of the two options $|x| = 1 \cdot x$ or $|x| = -1\cdot x$ is
true. Luckily, we can determine appropriate signs $\pm 1$ for each of
our three absolute values $|\t f.(i(e)) - \t f.(t(e))|$, $|\t
f_\zl.(i(e)) - \t f_\zl.(t(e))|$, and $|\t g.(i(e)) - \t g.(t(e))|$,
which appear in the defining formula for $A(f)$, $A(f(\zl))$, and
$A(g)$, and those signs may be chosen compatibly.

To each edge $e\in E(S)$ we assign a number $\zep(e) = \pm 1$ as
follows: if $|\t f.(i(e)) - \t f.(t(e))| = \t f.(i(e)) - \t f.(t(e))$,
then $\zep(e) = 1$; otherwise, $\zep(e) = -1$.

For $\zl$ sufficiently small, the function $\t f_\zl.$ will clearly
require the same signs $\zep(e)$.

Because the function $\t g.$ is positive only on the elements of
$S_0$, where $\t f.$ takes on its maximum value, $\t g.$ will require
the same sign provided that exactly one end point of $e$ is in
$S_0$. If neither end point of $e$ is in $S_0$ or if both end points
of $e$ are in $S_0$, then $\t g.(i(e)) - \t g.(t(e)) = 0$ and we may
use the same value of $\zep(e)$ for $\t g.$ as has already been chosen
for $\t f.$ and $\t f_\zl.$.

Thus we may choose multipliers $\zep(e) = \pm 1$ so that we have the
following three equalities for all $\zl$ sufficiently small:
$$|\t f.(i(e)) - \t f.(t(e))| = \zep(e)\cdot\big(\t f.(i(e)) - \t
f.(t(e))\big)$$
$$|\t f_\zl.(i(e)) - \t f_\zl.(t(e))| = \zep(e)\cdot\big(\t
f_\zl.(i(e)) - \t f_\zl.(t(e))\big)$$
$$|\t g.(i(e)) - \t g.(t(e))| = \zep(e)\cdot\big(\t g.(i(e)) - \t
g.(t(e))\big).$$We would be unable to obtain compatible signs $\zep(e)$
if $S_0$ had been chosen via anything but the \emph{maximum} value of
$\t f.$. With all absolute value signs replaced by constants
$\zep(e)$, it is an easy matter to calculate $d(A(f_\zl))/d\zl$ by the
quotient rule, and the result is as stated in the lemma:

$$\vbox{\halign{ $#$\hfil&$#$\hfil\cr d(A(f_\zl))/d\zl\bigg|_{\zl =
0} &=\sum_e\zep(e)\big(\t f.(i(e)) - \t f.(t(e))\big)\fr
d.d\zl.\bigg(\fr 1.1-\zl.\bigg)\bigg|_{\zl = 0}\cr &\hskip .2in
-\sum_e\zep(e)\big(\t g.(i(e)) - \t g.(t(e))\big)\fr d.d\zl.\bigg(\fr
\zl.1 - \zl.\bigg)\bigg|_{\zl = 0}\cr &= A(f) - A(g).\cr }}$$
\end{proof}

This argument completes the proof of the cooling theorem.
\end{proof}

\begin{proof}[Proof of the absolute cooling theorem] Set $h_0 \equiv
1$.
\end{proof}

We conclude this section with a discussion of Farkas' lemma.  In
Section 4 of \cite{Fol55}, F{\o}lner states essentially the
following result, which he presents as a minor modification of a
result proved by Carver in Theorem 3 of \cite{Car22}.

\begin{lemma}[Farkas' lemma]\label{lemma:farkas}  Let $L:\Rn
\to \Rm$ be a linear map and let $b\in \Rm$. Then a necessary and
sufficient condition that there be no solution to the inequality $L(x)
\ge b$ is that there exist a nonnegative vector $\za \in \Rm$, $\za
\ge 0$, such that $L(x)\cdot \za \equiv 0$ but $b \cdot \za > 0$.
\end{lemma}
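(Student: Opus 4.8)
The statement to prove is Farkas' lemma: for a linear map $L:\Rn\to\Rm$ and $b\in\Rm$, the inequality $L(x)\ge b$ has no solution if and only if there is a nonnegative $\za\in\Rm$ with $L(x)\cdot\za\equiv 0$ (identically in $x$) and $b\cdot\za>0$.

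The plan is to prove the easy direction first and then the hard direction by a separating-hyperplane argument on a closed convex cone. For the easy direction, suppose such an $\za\ge 0$ exists. If $x$ satisfied $L(x)\ge b$, then since $\za\ge 0$ we would get $L(x)\cdot\za\ge b\cdot\za>0$; but $L(x)\cdot\za=0$ by hypothesis, a contradiction. So no solution exists. This is a one-line computation and I would dispatch it immediately.

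For the hard direction, I would argue the contrapositive: assume no nonnegative $\za$ witnesses infeasibility, and produce a solution $x$. The key geometric object is the set $K=\{L(x)-u : x\in\Rn,\ u\in\Rm,\ u\ge 0\}\subset\Rm$, i.e. all vectors of the form $L(x)$ minus a nonnegative vector. This $K$ is a convex cone (closed under addition and nonnegative scaling), and it is closed because it is the sum of the linear subspace $L(\Rn)$ and the finitely generated cone $-\R^m_{\ge 0}$ — a finitely generated cone, and more generally the sum of a subspace and a finitely generated cone, is closed (this is the standard but slightly technical fact I would either cite or prove via a short lemma about finitely generated cones being closed). Now $L(x)\ge b$ has a solution precisely when $b\in K$. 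So suppose $b\notin K$. By the separating hyperplane theorem for a point and a closed convex set, there is $\za\in\Rm$ and a scalar $\zg$ with $\za\cdot b>\zg\ge \za\cdot k$ for all $k\in K$. Since $K$ is a cone containing $0$, taking $k=0$ gives $\zg\ge 0$, and taking $k=tk_0$ for $k_0\in K$, $t\to\infty$ forces $\za\cdot k_0\le 0$ for all $k_0\in K$. Feeding in $k=L(x)$ for arbitrary $x$ (both $\pm x$) gives $\za\cdot L(x)=0$ identically; feeding in $k=-e_j$ (the negative basis vectors, which lie in $K$) gives $-\za_j\le 0$, i.e. $\za\ge 0$. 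Finally $\za\cdot b>\zg\ge 0$ gives $\za\cdot b>0$. Thus $\za$ is the desired witness, contradicting our assumption; hence $b\in K$ and $L(x)\ge b$ has a solution.

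The main obstacle is the closedness of the cone $K$: the separating hyperplane theorem in the form I want (strict separation of a point from a convex set) genuinely requires $K$ to be closed, and unlike the polyhedral-cone case this is not automatic for an arbitrary convex cone. I would handle it by recording the lemma that a cone of the form (linear subspace) $+$ (nonnegative span of finitely many vectors) is closed — which for the relevant finitely generated piece follows from Carathéodory's theorem (every element is a nonnegative combination of a linearly independent subset of the generators, reducing to finitely many closed sets whose union is the cone) — or, given that the excerpt attributes the result to Carver and F\o lner, simply cite \cite{Car22} and \cite{Fol55} for the proof and restrict the new exposition to the geometric reduction above. Everything else — the cone algebra, the sign extractions, the final inequality chain — is routine.
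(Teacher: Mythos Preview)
Your proof is correct, but it is genuinely different from what the paper does. The paper does not prove Farkas' lemma at all: it simply cites Corollary~7.1e of Schrijver \cite{Sch99}, which states that $Ax\le b$ has a solution iff $yb\ge 0$ for every row vector $y\ge 0$ with $yA=0$, and then observes that replacing $b$ by $-b$ and taking the contrapositive yields exactly the stated lemma. So the paper's ``proof'' is a two-line reduction to a standard reference.

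Your approach, by contrast, is a self-contained separating-hyperplane argument: form the cone $K=L(\Rn)-\R^m_{\ge 0}$, note that feasibility of $L(x)\ge b$ is exactly $b\in K$, and if $b\notin K$ strictly separate $b$ from the closed convex cone $K$ to extract the witness $\za$. You correctly identify the only nontrivial step --- closedness of $K$ --- and correctly dispatch it by observing that $K$ is finitely generated (subspace plus orthant), hence polyhedral, hence closed. This buys a geometrically transparent, essentially complete proof, whereas the paper's citation keeps the exposition short and appropriate for a result being used as a black-box tool. Either is fine; just be aware that the paper itself opts for the citation.
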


This is one of the equivalent formulations of Farkas' lemma.  To see
this, we apply Corollary 7.1e on page 89 of Schrijver's book
\cite{Sch99}.  This corollary is one form of Farkas' lemma.  It
states the following, a bit loosely.  Let $A$ be a matrix and let $b$
be a vector.  Then the system $Ax\le b$ of linear inequalities has a
solution $x$, if and only if $yb\ge 0$ for each row vector $y\ge 0$
with $yA=0$.  Replacing $b$ with $-b$, yields the following.  The
system $Ax\ge b$ of linear inequalities has a solution $x$, if and
only if $yb\le 0$ for each row vector $y\ge 0$ with $yA=0$.
Lemma~\ref{lemma:farkas} is the contrapositive of this.

\section{Linear programming }\label{sec:lp}

In this section we give another proof of the cooling theorem,
Theorem~\ref{thm:cooling}, using standard results from linear
programming.  Murty \cite{Mur76} and Schrijver
\cite{Sch99} are good references for linear programming.  We
maintain the notation of Section~\ref{sec:cooling}.

We wish to minimize the maximum value of
  \begin{equation*}
c(e),-c(e)\qquad\forall e\in E(S)
  \end{equation*}
subject to the conditions
  \begin{equation*}
\sum_{i(e)=s}^{}c(e)-\sum_{t(e)=s}^{}c(e)\ge h_0(s)\qquad\forall
s\in S.
  \end{equation*}
In the spirit of linear programming, we introduce a variable $x_e$ for
every $e\in E(S)$ and restate this problem as follows.  Minimize the
maximum value of
  \begin{equation*}
x_e,-x_e\qquad\forall e\in E(S)
  \end{equation*}
subject to the conditions
  \begin{equation*}
\sum_{i(e)=s}^{}x_e-\sum_{t(e)=s}^{}x_e\ge h_0(s)\qquad\forall s\in
S.
  \end{equation*}
Now we introduce one more variable $x_h$ and in effect replace $x_e$
by $x_e$ divided by the maximum value of $\{\left|x_e\right|:e\in E(S)\}$
to see that our problem is equivalent to the following problem.
Maximize $x_h$ subject to the conditions
  \begin{equation*}\linnum\label{lin:system}
\begin{gathered}
h_0(s)x_h-\sum_{i(e)=s}^{}x_e+\sum_{t(e)=s}^{}x_e\le 0\qquad\forall
s\in S\\
\left.\begin{matrix}x_e\le 1 \\-x_e\le 1\end{matrix}\right\}\qquad\forall
e\in E(S).
\end{gathered}
  \end{equation*}
The solution of this problem is the inverse of the solution of our
original problem.  

This is a linear programming problem in the form $\max\{cx:Ax\le b\}$
as in line 13 on page 90 of Schrijver's book \cite{Sch99}.
Here $x$ is the column vector of variables $x_h$, $x_e$ for $e\in
E(S)$.  We have that $c$ is the row vector with as many components as
$x$ with the component corresponding to $x_h$ being 1 and all other
components being 0.  Similarly, $b$ is the column vector of right side
constants in line~\ref{lin:system} and $A$ is the coefficient matrix
of this system of linear inequalities.  (The $A$ here is not to be
confused with our previous $A$.)

Now we apply the duality theorem of linear programming.  Just as
Farkas' lemma, which we encountered at the end of
Section~\ref{sec:cooling}, has a number of formulations, so does the
duality theorem.  We apply the formulation given in Corollary 7.1g
on page 90 of Schrijver's book \cite{Sch99}:
  \begin{equation*}
\max\{cx:Ax\le b\}=\min\{yb:y\ge 0,yA=c\},
  \end{equation*}
provided that both of these sets are nonempty.  The origin is in the
first set, so the first set is nonempty.  We will soon see that the
second one is nonempty too.

So we now have new variables $y_s$ for every $s\in S$, one for every
inequality in line~\ref{lin:system} corresponding to an element of
$S$.  We also have new variables $y_e$ and $z_e$ for every $e\in E(S)$
corresponding to the two inequalities in line~\ref{lin:system} for
every $e\in E(S)$.  Furthermore $y$ is the row vector whose entries are
these new variables.  The duality theorem transforms our problem into
the following one.  Minimize
  \begin{equation*}
\sum_{e\in E(S)}^{}(y_e+z_e)
  \end{equation*}
subject to the conditions
  \begin{equation*}
\begin{gathered}
\sum_{s\in S}^{}h_0(s)y_s=1 \\
-y_{i(e)}+y_{t(e)}+y_e-z_e=0\qquad \forall e\in E(S)\\
y_s\ge 0\quad\forall s\in S\quad y_e\ge 0\quad\forall e\in E(S)
\quad z_e\ge 0\quad\forall e\in E(S).
\end{gathered}
  \end{equation*}
The last equation requires the convention that $y_{t(e)}=0$ if
$t(e)\ne S$.

  Suppose that we fix the variables $y_s$ and let the variables $y_e$
and $z_e$ vary.  Then $y_e-z_e$ is fixed for every $e\in E(S)$.  If $y$
and $z$ are nonnegative real numbers with $y-z=a$, a fixed value, then
the smallest possible value for $y+z$ is $\left|a\right|$.  Thus our
problem is equivalent to the following one.  Minimize
  \begin{equation*}
\sum_{e\in E(S)}\left|y_{i(e)}-y_{t(e)}\right|
  \end{equation*}
subject to the conditions
  \begin{equation*}
\begin{gathered}
\sum_{s\in S}^{}h_0(s)y_s=1 \\
y_s\ge 0\quad\forall s\in S.
\end{gathered}
  \end{equation*}
Now it is clear that our second set is also nonempty, and we have
statement 1 of the cooling theorem.

To prove statements 2 and 3 of the cooling theorem, note that the map
$A$ is not only convex but piecewise linear.  In fact, its restriction
to every cell of the first barycentric subdivision of $\Zd(S)$ is
linear.  As for general linear programming problems, it follows that
its minimum occurs at a vertex of the first barycentric subdivision of
$\Zd(S)$.  (See Section 3.5.5 of Murty's book \cite{Mur76} or
Section 8.3 of Schrijver's book \cite{Sch99}.)  These vertices
are the points $f\in \Zd(S)$ such that $f$ is constant on some subset
$S_0$ of $S$ and 0 on $S\setminus S_0$.  From the definition of $A$,
we see that if $f$ is such a point of $\Zd(S)$, then
  \begin{equation*}
A(f)=\frac{|\partial E(S_0)|}{|S_0|}=\frac{1}{FR(S_0)},
  \end{equation*}
as in line~\ref{lin:Ag}.

This completes our linear programming proof of the cooling theorem.

We conclude this section with a brief discussion involving the max
flow min cut theorem.  Given the close connection between the max flow
min cut theorem and linear programming, (See Section 7.10 of
Schrijver's book \cite{Sch99}.) it should not be surprising
that the max flow min cut theorem is relevant to our problem.

We apply Theorem 1.1 on page 38 of Ford and Fulkerson's book
\cite{FF62} We take the sets $R$ and $S$ there to be empty and
the set $T$ there to be our set $S$.  The set $\overline{X}$ there is
our $S_0$.  The function $f$ there is our cooling function $c$.  The
function $b$ there is our $h_0$.  The function $c$ there is our norm
$n=\left|c\right|$.  The conclusion is that the real number $n$ is the
norm of a cooling function $c$ on $S$ rel $h_0$ if and only if
$H(S_0)\le n \left| \partial E(S_0)\right|$ for every $S_0\subset S$.
Thus $N=\max_{S_0\subset S}FR(S_0)$.  This is statement 2 of the
cooling theorem.

\section{The modified simplex algorithm }\label{sec:algo}

The previous section shows that the problem of finding an optimal
cooling function is a linear programming problem.  As such, it can be
solved by the simplex method.  In our situation every step of the
simplex method proceeds as follows.  We have a cooling function $c$,
and the simplex method finds the vertices at which $c$ is not
optimal.  One chooses one of these vertices, and the step concludes
with a computation which improves $c$ at the chosen vertex.  This
algorithm has the defects that at every step the norm of $c$ usually
does not decrease, there is this choice of vertex, and poor choices
might lead to an infinite loop, although a further enhancement of the
algorithm can avoid infinite loops.  

The algorithm which we present in this section is a greedy algorithm
in that at every step we improve our cooling function $c$ at every
vertex at which $c$ is not optimal.  Because we deal with every bad
vertex, the norm of $c$ decreases at every step.  In our algorithm the
numerator of $\left|c\right|$ is always bounded by $\left|S\right|$
and the denominator of $\left|c\right|$ is bounded by
$\left|E(S)\right|$.  Since $\left|c\right|$ decreases at every step,
the number of steps is therefore bounded by
$\left|S\right|\left|E(S)\right|$.  The number of operations per step
is linear in $\left|E(S)\right|$.  Thus the number of operations
required for our algorithm to find the optimal cooling function is
cubic in $\left|E(S)\right|$.  In practice it seems to be quadratic.
The steps in our algorithm are far more complicated than the steps of
the simplex method, but there are far fewer of them.  In our limited
experience, our algorithm is faster.  Moreover, the subset $S_0$ of
$S$ with maximal F{\o}lner ratio which our algorithm finds also has
the property (See Theorem~\ref{thm:maxl} and the discussion
immediately preceding it.) that if $T$ is a subset of $S$ with maximal
F{\o}lner ratio, then $T\subset S_0$.

Here is our modified simplex algorithm.

Let $\G$ be a locally finite graph, as usual.  The edges of $\G$ are
initially undirected.  However, we often direct the edges of $\G$,
choosing directions to suit the occasion.  If $e$ is a directed edge
of $\G$, then, as usual, we let $i(e)$ denote the initial vertex of
$e$, and we let $t(e)$ denote the terminal vertex of $e$.

Now we fix a nonempty finite set $S$ of vertices of $\G$, as usual.
We direct the edges of $E(S)$ so that every edge of $\partial E(S)$ is
directed away from $S$.  Let $h$ be a nonnegative real number.  A
cooling function for $S$ relative to $h$ is a function $c: E(S)\to \R$
such that
  \begin{equation*}
\sum_{i(e)=v}^{}c(e)-\sum_{t(e)=v}^{}c(e)\ge h\quad\text{ for every }v\in S.
  \end{equation*}
We refer to this inequality as the vertex condition or vertex
inequality at $v$.

The absolute cooling theorem deals with the case in which $h=1$.  It
states that $S$ admits a cooling function relative to $h=1$ of minimum
norm, and this norm is $N=\underset{S_0\subset
S}{\max}FR(S_0)$.  In other words, there exists a maximum value
of $h$ such that $S$ admits a cooling function relative to $h$ with
norm 1, and this maximum value of $h$ is
$N^{-1}=\underset{S_0\subset S}{\min}FR(S_0)^{-1}$.  In this
way we are led to the problem of maximizing $h$ over all cooling
functions for $S$ with norm 1.  This is what our modified simplex
algorithm does: it maximizes $h$ over all cooling functions for $S$
with norm 1.  For every edge $e\in E(S)$, we refer to the inequality
$|c(e)|\le 1$ as the edge condition or edge inequality at $e$.

Our modified simplex algorithm proceeds in steps, starting with step
0.  After $n$ steps we have the following.  We have a forest $F_n$,
which is a subgraph of $\G$.  The vertex set of $F_n$ equals $S$.
Every connected component of $F_n$ is a rooted tree.  We direct every
edge of $F_n$ down toward the root of its component.  We also have a
set $R_n\subset E(S)\setminus \partial E(S)$.  No element of $R_n$ is
an edge of $F_n$, and every element of $R_n$ is directed.  We direct
the edges of $E(S)$ compatibly with $F_n$, $R_n$ and $\partial E(S)$.
We also have a nonnegative real number $h_n$.  A cooling function for
$(S,F_n,R_n,h_n)$ is a function $c: E(S)\to \R$ satisfying the
following conditions.
\begin{enumerate}
  \item $c(e)=1$ if $e\in R_n\cup \partial E(S)$
  \item $c(e)=0$ if $e$ is neither an edge of $F_n$ nor in $R_n\cup
\partial E(S)$
  \item $\sum_{i(e)=v}^{}c(e)-\sum_{t(e)=v}^{}c(e)=h_n$ for every
element $v$ of $S$ which is not a root of $F_n$
  \item $\sum_{i(e)=v}^{}c(e)-\sum_{t(e)=v}^{}c(e)\ge h_n$ for every
root $v$ of $F_n$
\end{enumerate}
In addition to the above, we have a cooling function $c_n$ for
$(S,F_n,R_n,h_n)$ with norm 1 which never takes the value
$-1$. Finally, $h_n$ is maximal with respect to the property that
there exists a cooling function for $(S,F_n,R_n,h_n)$ with norm 1.
Given such a quadruple $(F_n,R_n,h_n,c_n)$, the algorithm constructs
another one $(F_{n+1},R_{n+1},h_{n+1},c_{n+1})$ with $h_{n+1}>h_n$
unless the algorithm finds a subset of $S$ with maximum F{\o}lner
ratio.  This eventually occurs, the algorithm finds a subset of $S$
with maximum F{\o}lner ratio $h_n^{-1}$ together with an associated
cooling function and the algorithm stops.

The situation at step 0 is as simple as possible.  The forest $F_0$
has no edges.  Its connected components, the elements of $S$, are
trivial rooted trees.  The set $R_0$ is empty.  The cooling function
$c_0$ takes the value 1 on $\partial E(S)$ and the value 0 elsewhere.
The maximality of $h_0$ then implies that
  \begin{equation*}
h_0=\underset{v\in S}{\min}|\{e\in \partial E(S):i(e)=v\}|.
  \end{equation*}
All conditions are satisfied.

Now let $n$ be a nonnegative integer, and suppose that we have $F_n$,
$R_n$, $h_n$ and $c_n$ as above.  We prepare to construct $F_{n+1}$,
$R_{n+1}$, $h_{n+1}$ and $c_{n+1}$ in the next paragraph.

Suppose that the edge inequality satisfied by $c_n$ is strict for
every edge of $F_n$.  Let $T$ be a nontrivial connected component of
$F_n$, and let $v$ be a leaf of $T$.  (Roots are not leaves.)  Because
the edge inequality satisfied by $c_n$ at the edge $e$ of $T$ which
contains $v$ is strict, it is possible to send slightly more than
$c_n(e)$ units of heat toward the root of $T$ along $e$ while
satisfying the vertex equality at $v$ for a value slightly larger than
$h_n$.  We modify $c_n$ in this way at every edge of $T$ to satisfy
the vertex equality at every vertex of $T$ other than its root for a
real number slightly larger than $h_n$.  This applies to every such
connected component of $F_n$.  Thus because $h_n$ is maximal, the
vertex inequality satisfied by $c_n$ for $h_n$ at some root of $F_n$
is actually an equality.  We conclude that either the edge inequality
satisfied by $c_n$ at some edge of $F_n$ is actually an equality or
the vertex inequality satisfied by $c_n$ for $h_n$ at some root of
$F_n$ is actually an equality.  We observe that because $c_n$ never
takes the value $-1$, if $e$ is an edge of $F_n$, then $|c_n(e)|=1$ if
and only if $c_n(e)=1$.

We digress briefly in this paragraph to show that $h_n^{-1}$ is a
relative F{\o}lner ratio.  According to the previous paragraph there
exists either a root or an edge of $F_n$ such that the inequality
satisfied by $c_n$ at either this root or edge is actually an
equality.  If this equality holds at a root, then let $v$ be this
root, and if this equality holds at an edge, then let $v$ be the upper
vertex of this edge.  Let $S_0$ be the set of those vertices in $S$
which are either equal to or above $v$ relative to $F_n$.  By
assumption the vertex inequality satisfied by $c_n$ for $h_n$ is
actually an equality at every element of $S_0$ other than $v$, and now
this is even true at $v$.  We combine these equations, one for every
vertex of $S_0$.  We use the facts that $c_n(e)=1$ for every edge
$e\in R_n\cup \partial E(S)$ and $c_n(e)=0$ if $e$ is neither an edge
of $F_n$ nor in $R_n\cup \partial E(S)$.  We also use the fact that if
$v$ is not a root, then the edge of $F_n$ immediately below it is
directed away from $S_0$ and $c_n$ has value 1 at it.  Letting
$\partial 'E(S_0)$ denote the set of edges of either $\partial E(S)$
or $F_n$ which contain exactly one element of $S_0$, we obtain that
  \begin{equation*}
|\partial' E(S_0)|+\sum_{\substack{e\in R_n\\i(e)\in
S_0}}^{}1-\sum_{\substack{e\in R_n\\t(e)\in S_0}}^{}1=|S_0|h_n.
  \end{equation*}
Hence $h_n=RFR(S_0)^{-1}$, where
  \begin{equation*}
RFR(S_0)=|S_0|\left(|\partial' E(S_0)|+
\sum_{\substack{e\in R_n\\i(e)\in
S_0}}^{}1-\sum_{\substack{e\in R_n\\t(e)\in S_0}}^{}1\right)^{-1}.
  \end{equation*}
We view $RFR(S_0)$ as a relative F{\o}lner ratio.  It is the
F{\o}lner ratio of $S_0$ relative to $S$, $F_n$ and $R_n$.

We return to the result of the penultimate paragraph.  Either the edge
inequality satisfied by $c_n$ at some edge of $F_n$ is actually an
equality or the vertex inequality satisfied by $c_n$ for $h_n$ at some
root of $F_n$ is actually an equality.  We find every such root and
edge.  Every such root loses its status as root. Every such edge, but
not its vertices, is removed from $F_n$.  The result is a forest,
which we denote by $F'_n$.  Every edge just removed from $F_n$,
directed as in $F_n$, is added to $R_n$.  The result is a set of
directed edges, which we denote by $R'_n$.  The forest $F'_n$ has
connected components without roots.  We now construct $F_{n+1}$ by
inductively enlarging the rooted connected components of $F'_n$.  We
adjoin certain edges to $F'_n$ to construct a maximal forest with at
most one root per connected component.  The edges which we adjoin are
edges of $E(S)\setminus\partial E(S)$ which are either not in $R'_n$
or directed away from the rooted connected components.  So, if
possible we adjoin such an edge to $F'_n$, joining one of its rooted
connected components with one of its unrooted connected components.
The result is a forest, one of whose rooted connected components
contains two connected components of $F'_n$.  If possible we adjoin
such an edge to this new forest, joining one of its rooted connected
components with one of its unrooted connected components. We continue
in this way as long as possible.  This final forest is $F_{n+1}$.

Suppose that $F_{n+1}$ has a connected component which is not rooted
(which occurs if $F'_n$ has no roots).  Let $S_0$ be the set of
elements of $S$ which are not contained in rooted connected components
of $F_{n+1}$.  Then every edge in $\partial E(S_0)$ is either in
$\partial E(S)$ or it is in $R'_n$ and directed away from $S_0$.
Hence $c_n(e)=1$ for every $e\in \partial E(S_0)$.  It follows that
$c_n$ is removing as much heat from $S_0$ as is possible while
maintaining norm 1.  As in the paragraph which discusses relative
F{\o}lner ratios, there is no cooling function for $S$ with norm 1
relative to a value larger than $h_n=RFR(S_0)^{-1}=FR(S_0)^{-1}$,
where this relative F{\o}lner ratio is computed relative to $R'_n$.
In this case $S_0$ is a subset of $S$ with maximum F{\o}lner ratio,
and the algorithm stops.

Suppose that the algorithm does not stop at step $n$.  Then every
connected component of $F_{n+1}$ is rooted.  We define $R_{n+1}$ to be
the set of directed edges gotten from $R'_n$ by deleting those edges
of $R'_n$ which became edges of $F_{n+1}$.  If $v$ is a root of $F_n$
at which the inequality satisfied by $c_n$ for $h_n$ is an equality,
then $v$ is not a root of $F_{n+1}$.  If $e$ is an edge of $F_n$ at
which the inequality satisfied by $c_n$ is an equality, that is,
$c_n(e)=1$, then either $e$ is not an edge of $F_{n+1}$ or it is
directed away from the root of its connected component.  The cooling
function $c_n$ need not be a cooling function for
$(S,F_{n+1},R_{n+1},h_n)$ mostly because some edge directions might
have changed.  By simply changing some signs if necessary, $c_n$
determines a cooling function $c'_n$ for $(S,F_{n+1},R_{n+1},h_n)$
except for possibly taking the value $-1$ at some edges.  The value of
$c'_n$ at every edge of $F_{n+1}$ is strictly less than 1.  As at the
beginning of the passage from step $n$ to step $n+1$, this implies
that by appropriately increasing every value of $c'_n$ by a positive
amount we obtain a cooling function for $(S,F_{n+1},R_{n+1},h_n)$
which satisfies every conditional inequality for a real number
strictly larger than $h_n$.  Because the vertex inequality for $c'_n$
is an equality for every vertex of $F_{n+1}$ which is not a root,
every value of every cooling function for $(S,F_{n+1},R_{n+1},h'_n)$
with $h'_n\ge h_n$ is at least as large as the corresponding value of
$c'_n$.  Maximizing, we obtain a real number $h_{n+1}$ and a cooling
function $c_{n+1}$ for $(S,F_{n+1},R_{n+1},h_{n+1})$ with norm 1 which
never takes the value $-1$ such that $h_{n+1}$ is maximal with respect
to the property that there exists a cooling function for
$(S,F_{n+1},R_{n+1},h_{n+1})$ with norm 1.  The quadruple
$(F_{n+1},R_{n+1},h_{n+1},c_{n+1})$ satisfies all required conditions.
This completes the description of the algorithm.

In this paragraph we show that this algorithm ends with a solution
after finitely many steps.  We have seen that $h_n^{-1}$ is a relative
F{\o}lner ratio for every $n$.  Hence the numerator of $h_n$ is
bounded by $|E(S)|$ and the denominator of $h_n$ is bounded by $|S|$.
There are only finitely many possibilities for $h_n$.  Since the
sequence $h_0,h_1,h_2,\ldots$ is strictly increasing, it follows that
the algorithm ends after finitely many steps.

The proof of the following theorem is based on this modified simplex
algorithm.  By comparing the proof and the algorithm, one sees that
the algorithm finds the subset $S_0$ of the theorem.

\begin{theorem}\label{thm:maxl} The set $S$ contains a maximal 
F{\o}lner-optimal subset $S_0$, that is, a subset $S_0$ with maximum
F{\o}lner ratio, such that every F{\o}lner-optimal subset of $S$ is
contained in $S_0$.
\end{theorem}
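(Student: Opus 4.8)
The plan is to bypass the simplex algorithm and argue directly by an uncrossing argument based on submodularity of the edge boundary. Let $N=\max_{S_0\subset S}FR(S_0)$; this maximum exists because $S$ has only finitely many subsets (alternatively it is the quantity produced by Theorem~\ref{thm:cooling}). Call $T\subset S$ \emph{F{\o}lner-optimal} if $FR(T)=N$. Since $FR(\emptyset)=0<N$, every F{\o}lner-optimal subset is nonempty; and because $S$ admits a cooling function, so does each of its nonempty subsets (Lemma~\ref{lemma:exists}), so $\partial E(T)\ne\emptyset$ and $FR(T)=|T|/|\partial E(T)|$ makes sense. The theorem reduces to the statement that the union of two F{\o}lner-optimal subsets is again F{\o}lner-optimal: granting this, induction shows that the union $S_0$ of \emph{all} F{\o}lner-optimal subsets of $S$ is itself F{\o}lner-optimal, and it visibly contains every F{\o}lner-optimal subset, so it is the maximal one claimed.

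The heart of the matter is that the map $T\mapsto|\partial E(T)|$ is the edge-cut function of the graph $\G$ restricted to subsets of $S$, and such cut functions are submodular:
$$|\partial E(A\cup B)|+|\partial E(A\cap B)|\le|\partial E(A)|+|\partial E(B)|\qquad\text{for all }A,B\subset S.$$
One proves this edge by edge: an edge $e$ with endpoints $u,v$ contributes $|\mathbf{1}_T(u)-\mathbf{1}_T(v)|$ to $|\partial E(T)|$, and since $\mathbf{1}_{A\cup B}=\max(\mathbf{1}_A,\mathbf{1}_B)$ and $\mathbf{1}_{A\cap B}=\min(\mathbf{1}_A,\mathbf{1}_B)$ pointwise, the claim for that edge is the elementary inequality $|\max(a_1,b_1)-\max(a_2,b_2)|+|\min(a_1,b_1)-\min(a_2,b_2)|\le|a_1-a_2|+|b_1-b_2|$ with $a_i,b_i\in\{0,1\}$, a short case check. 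Combining this with the exact identity $|A|+|B|=|A\cup B|+|A\cap B|$: if $A$ and $B$ are F{\o}lner-optimal then $|A|=N|\partial E(A)|$ and $|B|=N|\partial E(B)|$, so $N\big(|\partial E(A\cup B)|+|\partial E(A\cap B)|\big)\le N\big(|\partial E(A)|+|\partial E(B)|\big)=|A|+|B|=|A\cup B|+|A\cap B|$. On the other hand, maximality of $N$ gives $|A\cup B|\le N|\partial E(A\cup B)|$ and $|A\cap B|\le N|\partial E(A\cap B)|$ (the latter being $0\le 0$ if $A\cap B=\emptyset$). Adding the last two inequalities and comparing with the previous one forces equality throughout, whence $|A\cup B|=N|\partial E(A\cup B)|$, i.e. $FR(A\cup B)=N$. (The same argument proves the relative version, with $H(\cdot)$ in place of $|\cdot|$, since $H$ is likewise additive over disjoint unions.)

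I expect the only real work to be routine bookkeeping: verifying the pointwise inequality underlying submodularity, and dispatching the degenerate cases — using Lemma~\ref{lemma:exists} to keep all relevant boundaries nonempty, and handling $A\cap B=\emptyset$ by hand so that the convention $FR(\emptyset)=0$ causes no trouble. No linear programming or calculus of variations is needed here. To square this with the rest of Section~\ref{sec:algo}, I would also observe that the modified simplex algorithm, which we have shown terminates, halts at a subset $S_0$ with $FR(S_0)=h_n^{-1}=N$, and that its stopping criterion — $c_n$ extracts the maximum amount of heat from $S_0$ compatible with norm $1$, so every edge of $\partial E(S_0)$ is saturated — shows that $S_0$ must contain each F{\o}lner-optimal subset; thus the algorithm in fact computes the maximal $S_0$ whose existence the uncrossing argument establishes abstractly.
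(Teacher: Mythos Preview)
Your proof is correct and takes a genuinely different route from the paper. The paper proves Theorem~\ref{thm:maxl} \emph{via} the optimal cooling function furnished by the absolute cooling theorem: fixing an optimal $c$ of norm $1$ rel $h_0=N^{-1}$, it shows that for any F{\o}lner-optimal $T$ every vertex inequality is an equality and every boundary edge is saturated ($c(e)=1$); it then defines $S_0$ as the complement in $S$ of the closure, under the relation ``connected by an unsaturated edge'', of the set of vertices where the vertex inequality is strict, and verifies $FR(S_0)=N$. Your argument instead is the classical uncrossing: submodularity of $T\mapsto|\partial E(T)|$ plus modularity of $T\mapsto|T|$ forces the union of two optimal sets to be optimal, so the (finite) union of all of them is the maximal one.

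Your approach is more elementary --- it does not invoke the cooling theorem, linear programming, or any flow --- and it generalizes immediately to the relative setting since $H$ is also modular. What the paper's argument buys is its explicit tie to the modified simplex algorithm: the set $S_0$ it constructs is described intrinsically in terms of the cooling function the algorithm outputs, which is how the paper substantiates its claim that the algorithm actually \emph{finds} the maximal optimal subset. Your last paragraph gestures at recovering this connection, but the sentence ``every edge of $\partial E(S_0)$ is saturated --- shows that $S_0$ must contain each F{\o}lner-optimal subset'' is not yet a proof; to make it one you would need precisely the two conclusions the paper draws in its second paragraph (equality at every vertex of $T$, saturation on $\partial E(T)$), applied to the algorithm's terminal $c_n$.
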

  \begin{proof} After choosing directions for the edges of $E(S)$, the
absolute cooling theorem implies that there exists a cooling function
$c: E(S)\to \R$ with norm 1 rel $h_0$, where $h_0$ is the inverse
of the maximum F{\o}lner ratio of a subset of $S$.  Without loss of
generality we assume that if $e\in \partial E(S)$, then $e$ is
directed away from $S$ and that $c(e)=1$.

Let $T$ be a F{\o}lner-optimal subset of $S$.  So $h^{-1}_0=FR(T)$.
We direct every edge of $\partial E(T)$ away from $T$.  By definition
$c$ satisfies the vertex inequality,
  \begin{equation*}
\sum_{i(e)=v}^{}c(e)-\sum_{t(e)=v}^{}c(e)\ge h_0
  \end{equation*}
for every $v\in S$, and so this holds for every $v\in T$.  We combine
these vertex inequalities, one for every element of $T$, and obtain
  \begin{equation*}
\sum_{e\in \partial E(T)}^{}c(e)\ge |T|h_0=|\partial E(T)|.
  \end{equation*}
Since $|c(e)|\le 1$ for every $e\in \partial E(T)$ and the number of
these summands equals $|\partial E(T)|$, we can make two conclusions.
One is that every vertex inequality for $T$ is actually an equality.
The other is that $c(e)=1$ for every $e\in \partial E(T)$.

We use these two conclusions to construct the maximal subset of $S$
with maximum F{\o}lner ratio.  For this we define a (nonreflexive,
nonsymmetric and nontransitive) relation on $S$ as follows.  Given
$u,v\in S$, we have that $u\sim v$ if and only if there exists $e\in E(S)$
directed so that $t(e)=u$, $i(e)=v$ and $c(e)<1$.  (The condition
$c(e)<1$ is equivalent to $c(e)\ne 1$.)  Here we view $c$ as being
defined not only for every edge $e\in E(S)$ with its given direction
but also for the opposite edge $\overline{e}$, so that
$c(\overline{e})=-c(e)$.

Let $X$ be the set of vertices of $S$ whose associated vertex
inequality is strict.  Let $\overline{X}$ be the closure of $X$ under
the above relation.  Conclusion one from above implies that every
subset of $S$ with maximum F{\o}lner ratio is contained in the
complement of $X$.  Conclusion two then implies that every subset of
$S$ with maximum F{\o}lner ratio is contained in the complement of
$\overline{X}$.  

Now let $S_0$ be the complement of $\overline{X}$.  To prove the
theorem it suffices to prove that $FR(S_0)=h_0^{-1}$, the
maximum F{\o}lner ratio of a subset of $S$.  For this we first observe
that every vertex inequality is actually an equality for every element
of $S_0$.  Furthermore, if $e\in \partial E(S_0)$ is directed away
from $S_0$, then $c(e)=1$.  Combining the vertex equalities associated
to $S_0$ as in the second paragraph of this proof, we find that
$|\partial E(S_0)|=|S_0|h_0$.  This implies that
$FR(S_0)=h_0^{-1}$.

This proves Theorem~\ref{thm:maxl}.
\end{proof}

\section{The relative cooling theorem}\label{sec:relative} Our plan
for the rest of the paper is to construct cooling functions by finding
a F{\o}lner-optimal subset $S_0 \subset S$, building an optimal
cooling function $c_0$ on $S_0$, and then building cooling functions
on larger and larger subsets of $S$, extending $c_0$ layer by layer. A
cooling function on a smaller subset pumps heat into the layers not
yet considered. Thus we need to know when the extension is
possible. This problem is dealt with by means of the relative cooling
theorem, which is a corollary to the cooling theorem of
Section~\ref{sec:cooling}.  The only twist in the argument is that we
do not apply the theorem to a subgraph of $\G$ but rather to a
slightly modified subgraph.

{\bf Setting:} We assume $\G$, $G$, $S$, $E(S)$, and $\bd E(S)$ given,
with edges $e\in E(S)$ oriented as before. 

{\bf The modified graph $\G'_S$ and modified vertex set $S'$:} We form
a new graph $\G'_S$ from the graph $\G_S = \cup\{e\,|\, e\in E(S)\}$
by splitting all edges $e\in \bd E(S)$ apart at any common vertex in
the complement of $S$ so that the boundary edges have distinct
terminal vertices in the complement of $S$. Let $S'$ be a subset of the
set of vertices of $\G '_S$ with $S\subset S'$.

{\bf Initial conditions:} We assume given a function $h_0:S' \to
(0,\infty)$.

{\bf We assume:} $S'$ admits a cooling function rel $h_0$.

Lemma~\ref{lemma:exists} implies that $S'$ admits a cooling function
rel $h_0$ if and only if every connected component of $S'$ has a
nonempty boundary.

We immediately obtain the following \emph{relative cooling theorem},
which is simply the general cooling theorem of
Section~\ref{sec:cooling} applied to our modified graph $\G'_S$:

\begin{theorem}[Relative cooling]
(1) If $N$ is the minimum possible norm of a cooling function rel
$h_0$, then the reciprocal $1/N$ is given by the minimum value of a
convex function $A(f)$ on the simplex $\Zd(S')$ as follows:
$$1/N = \min_{f\in \Zd(S')}A(f),$$ where $\t f.$ is the modification
of $f$ defined above and $$A(f) =\sum_{e\in E(S')}|\t f.(i(e)) - \t
f.(t(e))|.$$

(2) The minimum possible norm $N$ is given by the maximum \F\ ratio:
$$N = \max_{S_0 \subset S'}FR(S_0).$$

(3) If $FR(S_0)$ realizes the maximum in (2), then the function $f\in
\Zd(S')$ whose modification $\t f.$ is constant on $S_0$ and $0$ on
$S'\setminus S_0$ realizes the minimum in (1).

\end{theorem}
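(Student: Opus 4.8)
The plan is to recognize the relative cooling theorem as nothing more than the cooling theorem, Theorem~\ref{thm:cooling}, applied to the modified graph $\G'_S$ in place of $\G$ and to the finite vertex set $S'$ in place of $S$; so the proof reduces to checking that the hypotheses of Section~\ref{sec:cooling} hold for the pair $(\G'_S, S')$ and that the notation matches up.

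First I would verify the standing hypotheses. Since $S$ is finite and $\G$ is locally finite, $E(S)$ is a finite edge set, so $\G_S = \cup\{e : e\in E(S)\}$ is a finite graph; splitting the boundary edges apart at their outer endpoints adds only finitely many vertices, so $\G'_S$ is a finite, hence locally finite, graph, and $S'$ is a nonempty finite set of its vertices (nonempty because $\emptyset \ne S \subset S'$). The hypothesis that $S'$ admits a cooling function rel $h_0$ is exactly the ``always assume'' condition of the permanent setting, now imposed on $(\G'_S, S')$; equivalently, by Lemma~\ref{lemma:exists}, every connected component of $S'$ has nonempty boundary.

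Next I would unwind the notation for the ambient graph $\G'_S$. The set $E(S')$ of edges having at least one vertex in $S'$ is the full edge set of $\G'_S$, since every edge of $\G_S$ already has a vertex in $S\subset S'$; and the boundary $\bd E(S')$, the vertex set $V=\cup_{e\in E(S')}\bd e$, the simplex $\Zd(S')$, and the modification $\t f.$ (with $\t f.(v)=f(v)/h_0(v)$ for $v\in S'$ and $\t f.(v)=0$ for $v\in V\setminus S'$) are literally the Section~\ref{sec:cooling} definitions with $S$ replaced by $S'$. With these identifications, parts (1), (2), (3) of the relative cooling theorem become verbatim parts (1), (2), (3) of Theorem~\ref{thm:cooling} for $(\G'_S, S')$, so the theorem follows at once.

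I do not expect a genuine obstacle. The only point requiring care is a bookkeeping one: confirming that ``cooling function rel $h_0$'' and ``\F\ ratio'' for subsets of $S'$ are computed from the structure of $\G'_S$ --- in which, by construction, distinct boundary edges of $S$ have distinct outer endpoints --- rather than from $\G$. This edge-splitting is harmless for the proof of the theorem itself; its role is only to make the later layer-by-layer arguments of Sections~\ref{sec:peelings} and \ref{sec:building} well posed, when $S'$ is enlarged to absorb some of those outer vertices. Once $\G'_S$ is fixed as the ambient graph, the statement is immediate.
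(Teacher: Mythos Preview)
Your proposal is correct and matches the paper's approach exactly: the paper states that the relative cooling theorem ``is simply the general cooling theorem of Section~\ref{sec:cooling} applied to our modified graph $\G'_S$'' and gives no further argument, so your verification of the standing hypotheses and notation is, if anything, more detailed than what the paper provides.
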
We shall apply this relative cooling theorem in
Section~\ref{sec:peelings}.

\section{Peelings}\label{sec:peelings} We assume $\G$, $G$, $S$,
$E(S)$, $\bd E(S)$, and $h_0:S \to (0,\infty)$ given as before. Our
goal is to find a F{\o}lner-optimal subset $S_0\subset S$. We propose
to do so by peeling layers away from $S$ until we find the desired
set.

Let $P$ be a nonempty subset of $S$, and let $T=S\setminus P$.  We
have the (split) graph $\G'_P$ from the previous section. Let $P'$ be
the subset of $\G'_P$ consisting of $P$ together with the boundary
vertices of $\G'_P$ arising from (the splitting of) points of $T$.  If
$P_0 \subset P'$, then we let $B(P_0)$ denote the cardinality of
$P_0\setminus P$ and we let $B'(P_0)$ denote the number of edges in
$\G'_P$ having exactly one vertex in $P_0$. [Note that if $x\in
P_0\setminus P$ is not joined by its unique edge $e$ in $\G'_P$ to a
point of $P_0$, then $x$ is counted in $B(P_0)$ and $e$ is counted in
$B'(P_0)$.]  We call $P$ a \emph{peeling} for $S$ if
$$\forall\, P_0\subset P',\quad H(P_0\cap P)\le FR(T)(B'(P_0) -
B(P_0)).$$

The next lemma gives an alternate characterization of peelings.

\begin{lemma}\label{lemma:alt}  Let $N$ be a positive real
number.  In the above setting, we extend $h_0|_P:P\to (0,\infty)$ to
$P'$ by defining $h_0(x)=N$ for each $x\in P'\setminus P$.  Then
$FR(P_0)\le N$ if and only if $H(P_0\cap P)\le N(B'(P_0)-B(P_0))$ for
every $P_0\subset P'$.  In particular, taking $N=FR(T)$ shows that $P$
is a peeling for $S$ if and only if $FR(P_0)\le FR(T)$ for every
$P_0\subset P'$.
\end{lemma}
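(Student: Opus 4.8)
The statement to prove is Lemma~\ref{lemma:alt}: with $h_0$ extended to $P'$ by setting $h_0(x)=N$ on $P'\setminus P$, we have $FR(P_0)\le N$ for every $P_0\subset P'$ if and only if $H(P_0\cap P)\le N(B'(P_0)-B(P_0))$ for every $P_0\subset P'$. The plan is simply to unwind the definitions of $H(P_0)$, $FR(P_0)$, $B(P_0)$, and $B'(P_0)$ and check that the two displayed inequalities are literally the same inequality after rearrangement.

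First I would compute $H(P_0)$ with respect to the extended $h_0$. Since $h_0=N$ on the $B(P_0)$ vertices of $P_0\setminus P$ and equals the original $h_0$ on the vertices of $P_0\cap P$, we get $H(P_0)=H(P_0\cap P)+N\cdot B(P_0)$. Next I would identify the boundary edge count of $P_0$ in the graph $\G'_P$. By definition $B'(P_0)$ counts the edges of $\G'_P$ having exactly one vertex in $P_0$, which is precisely $|\bd E(P_0)|$ computed inside $\G'_P$; here one must be a little careful about the split boundary vertices, but the bracketed remark in the definition of peeling (a split point $x\in P_0\setminus P$ whose unique edge $e$ does not reach $P_0$ contributes to both $B(P_0)$ and $B'(P_0)$) is exactly the bookkeeping that makes $B'(P_0)=|\bd E(P_0)|$ hold verbatim. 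So $FR(P_0)=H(P_0)/B'(P_0)$.

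With these substitutions, the inequality $FR(P_0)\le N$ becomes $H(P_0\cap P)+N\cdot B(P_0)\le N\cdot B'(P_0)$, i.e. $H(P_0\cap P)\le N(B'(P_0)-B(P_0))$, which is exactly the right-hand condition. (One should note $B'(P_0)>0$ whenever $FR(P_0)$ is defined, so dividing by $B'(P_0)$ is legitimate; and conversely, if $B'(P_0)=0$ the right-hand inequality forces $H(P_0\cap P)\le 0$, hence $P_0\cap P=\emptyset$ and $P_0$ consists of isolated split vertices, a degenerate case consistent with $FR(\emptyset)=0$.) The ``in particular'' clause is then immediate: taking $N=FR(T)$, the extended $h_0$ is the one used in the definition of peeling, and the displayed peeling condition $H(P_0\cap P)\le FR(T)(B'(P_0)-B(P_0))$ is precisely the right-hand side of the lemma, so $P$ is a peeling iff $FR(P_0)\le FR(T)$ for all $P_0\subset P'$.

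The only real point requiring care — the ``main obstacle,'' such as it is — is the correct accounting of the split boundary vertices: making sure that $B'(P_0)$ genuinely equals the number of boundary edges $|\bd E(P_0)|$ in $\G'_P$ in all edge cases (in particular when a split vertex lies in $P_0$ but its edge does not), so that the formula $FR(P_0)=\bigl(H(P_0\cap P)+N\cdot B(P_0)\bigr)/B'(P_0)$ is exact. Once that identification is pinned down, the lemma is a one-line algebraic rearrangement.
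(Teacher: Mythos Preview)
Your proposal is correct and follows essentially the same route as the paper: compute $H(P_0)=H(P_0\cap P)+N\cdot B(P_0)$, identify $B'(P_0)$ with $|\bd E(P_0)|$ in $\G'_P$, and rearrange $FR(P_0)\le N$ into the peeling inequality. The paper's proof is the bare three-line version of this; your added remarks on the split-vertex bookkeeping and the $B'(P_0)=0$ degenerate case are more careful than what the paper writes but do not change the argument.
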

  \begin{proof} Let $P_0\subset P'$.  Then
  \begin{equation*}
FR(P_0)=\frac{N\cdot B(P_0)+H(P_0\cap P)}{B'(P_0)}.
  \end{equation*}
So $FR(P_0)\le N$ if and only if
  \begin{equation*}
N\cdot B(P_0)+H(P_0\cap P)\le N\cdot B'(P_0)
  \end{equation*}
if and only if
  \begin{equation*}
H(P_0\cap P)\le N(B'(P_0)-B(P_0)).
  \end{equation*}
This proves Lemma~\ref{lemma:alt}.
\end{proof}

The next lemma gives a basic property of peelings.

\begin{lemma}\label{lemma:peeling0}  If $P$ is a peeling for
$S$, then every connected component of $P'\subset \G'_P$ has a
nonempty boundary.
\end{lemma}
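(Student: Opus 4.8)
The plan is to argue by contradiction, feeding the suspect component into the defining inequality of a peeling. First I would recall that, by Lemma~\ref{lemma:exists}, the assertion ``every connected component of $P'$ has nonempty boundary'' is precisely the condition for $P'$ to admit a cooling function rel $h_0$; so this lemma is exactly what is needed before one can apply the relative cooling theorem to $\G'_P$ with vertex set $P'$ in the next section. So suppose $P$ is a peeling for $S$ but some connected component $P'_0$ of $P'\subset\G'_P$ has empty boundary, i.e.\ no edge of $\G'_P$ has exactly one vertex in $P'_0$, so that $B'(P'_0)=0$. Plugging $P_0=P'_0$ into the peeling inequality $H(P_0\cap P)\le FR(T)(B'(P_0)-B(P_0))$ gives $H(P'_0\cap P)\le -FR(T)\cdot B(P'_0)\le 0$, since $FR(T)\ge 0$ and $B(P'_0)\ge 0$. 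Because $h_0$ is strictly positive on $S$, this forces $P'_0\cap P=\emptyset$, and the remaining work is to show this is impossible.

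The substantive step, then, is purely graph-theoretic: I would show that no connected component of $P'$ can lie entirely inside $P'\setminus P$, by unwinding the construction of $\G'_P$. Each vertex $x\in P'\setminus P$ is a copy of a point of $T$ produced when the boundary edge $e\in\bd E(P)$ to which it belongs was split off; in $\G'_P$ the vertex $x$ is then incident to exactly that one edge $e$, whose other endpoint is $i(e)\in P$. Since both endpoints of $e$ lie in $P'$, the edge $e$ is not a boundary edge of $P'$, hence $e$ belongs to the graph $\cup\{e:e\in E(P')\setminus\bd E(P')\}$ whose connected components define those of $P'$; therefore $x$ and $i(e)$ lie in the same component. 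Consequently every nonempty component of $P'$ meeting $P'\setminus P$ also meets $P$, so in particular $P'_0\cap P\ne\emptyset$, contradicting the conclusion of the first paragraph and completing the proof.

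The only point that needs genuine care is the bookkeeping inside the split graph $\G'_P$: one must verify that a split-off vertex of $P'$ is incident to precisely one edge of $\G'_P$, that this edge runs to a vertex of $P$, and that it counts as an interior (non-boundary) edge of $P'$, so that the component-forming graph indeed attaches it to $P$. Once that is pinned down, the rest is immediate from $h_0>0$ and $FR(T)\ge 0$. (As an alternative wording of the final step, one could invoke Lemma~\ref{lemma:alt}: after extending $h_0$ to $P'$ by the value $FR(T)$ on $P'\setminus P$, being a peeling means $FR(P_0)\le FR(T)$ for every $P_0\subset P'$, and a boundary-less nonempty component manifestly cannot satisfy this.)
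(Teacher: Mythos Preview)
Your proof is correct, but the paper's is shorter and in fact proves more: it shows $B'(P_0)>0$ for \emph{every} nonempty $P_0\subset P'$, not just for connected components. The paper simply splits on whether $B(P_0)=0$: if so, $P_0\subset P$ and $H(P_0\cap P)>0$, so the peeling inequality forces $B'(P_0)>0$; if not, then $H(P_0\cap P)\ge 0$ and $FR(T)>0$ give $B'(P_0)\ge B(P_0)>0$ directly from the peeling inequality. Your graph-theoretic detour, while correct, is unnecessary: from $0\le H(P'_0\cap P)\le -FR(T)\,B(P'_0)$ and $FR(T)=H(T)/|\partial E(T)|>0$ (since $h_0>0$) you already get $B(P'_0)=0$, hence $P'_0\subset P$, which together with your conclusion $P'_0\cap P=\emptyset$ yields $P'_0=\emptyset$ without ever unwinding the split-graph construction. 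What your version adds is an explicit picture of why split-off vertices are attached to $P$; what the paper's route buys is brevity and a slightly stronger conclusion.
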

  \begin{proof} Let $P_0$ be a nonempty subset of $P'$.  From the
definition of peeling,
  \begin{equation*}
H(P_0\cap P)\le FR(T)(B'(P_0)-B(P_0)).
  \end{equation*}
If $B(P_0)=0$, then $P_0\subset P$, and so $H(P_0\cap P)\ne 0$.  It
easily follows that $B'(P_0)$ is positive for every nonempty subset
$P_0$ of $P'$.  In particular, it is positive if $P_0$ is a connected
component of $P'$.  So some edge of $\G'_P$ contains exactly one
vertex of $P_0$.  This proves Lemma~\ref{lemma:peeling0}.
\end{proof}

Peelings are important for two reasons:

\begin{theorem}\label{thm:peeling1}  If $P$ is a peeling
for $S=P\amalg T$, then every cooling function for $T$ rel $h_0$ can
be extended to a cooling function for $S$ rel $h_0$ without increasing
the cooling norm.
\end{theorem}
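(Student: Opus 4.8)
The plan is to pass to the split graph $\G'_P$ and apply the relative cooling theorem once. Begin with a cooling function $c_T\colon E(T)\to\R$ for $T$ rel $h_0$ and put $N_0=|c_T|$; Lemma~\ref{lemma:lowerbd} applied to the subset $T$ of $T$ gives $FR(T)\le N_0$. Every edge of $\G$ meeting a vertex of $T$ already lies in $E(T)$, so leaving $c_T$ unchanged on $E(T)$ automatically satisfies the vertex inequality at each $v\in T$ no matter how we proceed. Thus the task is to assign values of absolute value at most $N_0$ to the remaining edges of $E(S)$ — the edges internal to $P$ together with the edges of $\bd E(P)$ lying in $\bd E(S)$ — in such a way that the vertex inequality holds at every $v\in P$, bearing in mind that $c_T$ has already pushed at most $N_0$ units of heat (in absolute value) into $P$ across each edge of $\bd E(P)$ that joins $P$ to $T$.

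To do this, work in $\G'_P$ with vertex set $P'$. Recall that $P'\setminus P$ is in bijection with the edges $e_x\in\bd E(P)$ that join $P$ to $T$, where $e_x$ runs in $\G'_P$ from $i(e_x)\in P$ to a degree-one vertex $x\in P'\setminus P$. Extend $h_0|_P$ to $P'$ by setting $h_0(x)=N_0$ for each $x\in P'\setminus P$. Since $P$ is a peeling, Lemma~\ref{lemma:peeling0} shows every connected component of $P'$ has nonempty boundary, so $P'$ admits a cooling function rel this extended $h_0$ by Lemma~\ref{lemma:exists}. The peeling inequality $H(P_0\cap P)\le FR(T)(B'(P_0)-B(P_0))$ forces $B'(P_0)\ge B(P_0)$ for every $P_0\subset P'$ (its left side is nonnegative and $FR(T)>0$), and then, because $FR(T)\le N_0$, it improves to $H(P_0\cap P)\le N_0(B'(P_0)-B(P_0))$; by Lemma~\ref{lemma:alt} with $N=N_0$ this means $FR(P_0)\le N_0$ for every $P_0\subset P'$. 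The relative cooling theorem, applied to $\G'_P$, $P'$, and the extended $h_0$, therefore yields a cooling function $c'$ for $P'$ rel this $h_0$ with $|c'|\le N_0$.

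Finally, splice: let $c_S$ agree with $c'$ on the new edges (those internal to $P$ and those of $\bd E(P)\cap\bd E(S)$) and with $c_T$ on $E(T)$. Then $|c_S|\le\max(|c'|,|c_T|)\le N_0$. For the vertex inequality at $v\in P$, the vertex inequality for $c'$ at the degree-one vertex $x$ says exactly that $c'$ delivers at least $h_0(x)=N_0$ units of heat from $x$ into $v=i(e_x)$, while $c_T$ delivers at most $N_0$ units across $e_x$ into $v$ since $|c_T(e_x)|\le N_0$; because the edges of $\G$ at $v$ within $E(S)$ correspond to the edges of $\G'_P$ at $v$, the heat balance of $c_S$ at $v$ arises from that of $c'$ at $v$ by replacing, for each such $e_x$, a term delivering $N_0$ units to $v$ by one delivering no more. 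Hence $c_S$ loses at least as much heat at $v$ as $c'$ does, namely at least $h_0(v)$, and $c_S$ is the desired extension.

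The one genuinely delicate point is this last splice: one must first choose the edge orientations compatibly — every edge of $\bd E(S)$, and every edge joining $P$ to $T$, pointing out of $P$ — and then track signs carefully enough to justify that ``replace the heat $c'$ sends through $e_x$ by the heat $c_T$ sends through it'' is a legitimate substitution. The other ingredient that needs to be seen in advance is that the correct boundary value on the split vertices of $P'$ is precisely the norm $N_0$ of $c_T$, not $FR(T)$: any smaller value and $c_T$ could dump more heat onto $P$ than $c'$ is built to absorb; any larger value and the norm of $c'$ could exceed $N_0$. Everything else is the cited lemmas plus routine bookkeeping.
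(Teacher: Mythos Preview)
Your proof is correct and follows essentially the same route as the paper's: pass to $\G'_P$, extend $h_0$ to $P'$ by setting $h_0(x)$ equal to the norm $N_0$ of the given cooling function on $T$, use Lemmas~\ref{lemma:peeling0}, \ref{lemma:exists}, and \ref{lemma:alt} together with $FR(T)\le N_0$ to bound every $FR(P_0)$ by $N_0$, and invoke the relative cooling theorem to obtain $c'$.

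The one cosmetic difference is in how the splice is handled. The paper first \emph{modifies} $c_T$ so that it equals $N$ on every boundary edge of $T$; then $c'$ and the modified $c_T$ automatically agree on the $P$--$T$ edges (both take the value $\pm N$ there), so the glue is trivial, and afterwards one returns the boundary values of $c_T$ to their original sizes, which can only help the vertex inequalities on $P$. You instead leave $c_T$ untouched and verify the splice directly, observing that along each $P$--$T$ edge $c'$ absorbs exactly $N_0$ units into $P$ (forced by the vertex inequality at the degree-one vertex $x$ together with $|c'|\le N_0$), while $c_T$ pushes in at most $N_0$; replacing the former by the latter therefore only increases the heat loss at $v\in P$. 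This is the same inequality the paper implicitly uses when it ``returns the values of the extension on the boundary edges of $T$ to their original values''; you have simply made it explicit rather than routing it through a preliminary modification of $c_T$.
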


\begin{theorem}\label{thm:peeling2}  If $T$ is a proper
subset of $S$ which is F{\o}lner-optimal, then $S\setminus T$ is a
peeling for $S$.
\end{theorem}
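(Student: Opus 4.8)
The plan is to prove the contrapositive-flavored statement directly: assuming $T\subsetneq S$ is F{\o}lner-optimal, I must verify the defining inequality for a peeling, namely that for $P=S\setminus T$ and every $P_0\subset P'$ one has $H(P_0\cap P)\le FR(T)\,(B'(P_0)-B(P_0))$. By Lemma~\ref{lemma:alt}, with $N=FR(T)$, this is equivalent to showing $FR(P_0)\le FR(T)$ for every $P_0\subset P'$, where the F{\o}lner ratio of subsets of $P'$ is computed in the split graph $\G'_P$ with $h_0$ extended by the constant value $FR(T)$ on the boundary vertices $P'\setminus P$. So the real content is: no subset of $P'$ can beat the F{\o}lner ratio $FR(T)$, given that $T$ achieves the maximum F{\o}lner ratio among subsets of $S$.

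First I would fix $N=FR(T)=\max_{S_0\subset S}FR(S_0)$ and suppose, for contradiction, that some nonempty $P_0\subset P'$ has $FR(P_0)>N$, i.e. $H(P_0\cap P)>N\,(B'(P_0)-B(P_0))$. Write $P_0=Q\amalg R$ where $Q=P_0\cap P\subset P\subset S$ and $R=P_0\setminus P\subset P'\setminus P$; recall each vertex of $P'\setminus P$ is a split copy of some point of $T$, attached by a single edge of $\G'_P$ to a vertex of $P$. The key step is to reassemble a subset of $S$ out of $Q$ and $T$ that outperforms $T$. The natural candidate is $S_0 = Q\cup T\subset S$. I would carefully account for edges: the boundary edges $\bd E(S_0)$ in $\G$ consist of (i) boundary edges of $S$ with initial vertex in $Q$, together with (ii) edges of $\G$ from $Q$ to $T$ that are \emph{not} in $S_0$'s interior — but every edge from $Q$ to $T$ \emph{is} interior to $S_0$, so these contribute nothing; and (iii) edges of $\G$ internal to $S$ from $Q$ out to... wait, there are none left. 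Actually the edges leaving $S_0=Q\cup T$ are exactly the edges of $\bd E(S)$ leaving $Q$ plus the edges of $\bd E(S)$ leaving $T$. Meanwhile $B'(P_0)$ counts edges of $\G'_P$ with exactly one endpoint in $P_0$: these are the boundary edges of $S$ leaving $Q$, plus the split edges from $P$ into $T$ that land in $R$ or leave $R$. The quantity $B'(P_0)-B(P_0)$ is designed precisely so that a split boundary vertex in $R$ contributes $+1$ to $B'$ and $+1$ to $B$ (net $0$) if its edge points away from $P_0$, and a point of $T$ reached from $Q$ but \emph{not} in $R$ contributes to $B'$ only. The plan is to translate the inequality $H(Q)>N(B'(P_0)-B(P_0))$ into $H(S_0)>N\,|\bd E(S_0)|$, i.e. $FR(S_0)>N$, contradicting maximality of $N$.

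The hard part will be the bookkeeping in the previous step: matching the edges counted by $B'(P_0)-B(P_0)$ in the split graph $\G'_P$ with the genuine boundary edges $\bd E(S_0)$ in $\G$, since $\G'_P$ was obtained by splitting boundary edges of $P$ apart at common vertices in $T$, and $P_0$ is allowed to include some split copies in $T$ and exclude others. The cleanest formulation: set $S_0 = Q\cup\{x\in T : x \text{ is the image of a vertex in } P_0\setminus P \text{ whose edge does not leave } P_0\}$ ... but since $T$ is F{\o}lner-optimal I actually want $T\subset S_0$, so I should instead argue that enlarging to $Q\cup T$ only helps, using $FR(T)=N$ and a "gluing two sets of ratio $\le N$ along shared boundary can only raise the ratio toward $N$" lemma. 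Concretely: if $U,W\subset S$ with $FR(U)\ge N'$ in a suitable relative sense and $FR(W)=N$, then $FR(U\cup W)\ge\min$ of the two — a mediant/Farey-sum estimate, since $H$ and $|\bd E|$ are subadditive with a correction for the shared edges that are absorbed as interior edges. I would isolate this as a short computational lemma (numerator adds, denominator adds and then \emph{decreases} by twice the number of $U$–$W$ edges) and conclude $FR(Q\cup T) > N$, the desired contradiction. Thus no such $P_0$ exists, $FR(P_0)\le N=FR(T)$ for all $P_0\subset P'$, and by Lemma~\ref{lemma:alt} $P=S\setminus T$ is a peeling for $S$.
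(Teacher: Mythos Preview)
Your strategy --- reduce via Lemma~\ref{lemma:alt} to $FR(P_0)\le N=FR(T)$ for all $P_0\subset P'$, suppose some $P_0$ violates this, and manufacture $S_0=Q\cup T\subset S$ with $FR(S_0)>N$ --- is correct in outline and genuinely different from the paper's. The paper instead goes through cooling functions: an optimal cooling function $c$ on $S$ has norm $N$ by statement~2 of the cooling theorem; since $c$ must cool $T$ at average $N$ per boundary edge, every edge of $\partial E(T)$ carries exactly $N$ units; transporting $c$ to $\G'_P$ yields a cooling function for $P'$ rel the extended $h_0$ of norm $N$, and statement~2 of the relative cooling theorem bounds every $FR(P_0)$ by $N$. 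Your route is more elementary (no Farkas, no cooling functions at all); the paper's is shorter once that machinery is in hand.

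Your execution has a genuine gap, though. The assertion ``the edges leaving $S_0=Q\cup T$ are exactly the edges of $\partial E(S)$ leaving $Q$ plus the edges of $\partial E(S)$ leaving $T$'' is false: you have lost the edges from $Q$ to $P\setminus Q$ and from $T$ to $P\setminus Q$, which lie inside $S$ but belong to $\partial E(S_0)$. The mediant sketch also does not close as written: the ordinary ratio $FR(Q)$ in $\G$ is $\le N$ (it is a subset of $S$), so a mediant of $FR(Q)$ with $FR(T)=N$ cannot exceed $N$, and when you replace $|\partial E(Q)|$ by the relative quantity $B'(P_0)-B(P_0)$ the correction is not simply ``twice the number of $Q$--$T$ edges''. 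What you actually need is the single inequality $|\partial E(Q\cup T)|\le (B'(P_0)-B(P_0))+|\partial E(T)|$; combined with $H(T)=N|\partial E(T)|$ and your hypothesis $H(Q)>N(B'(P_0)-B(P_0))$ it gives $FR(Q\cup T)>N$ at once. To check it, let $a,b,c$ count the $\G$-edges from $Q$ to $P\setminus Q$, to $G\setminus S$, and to $T$, and let $d,e$ count those from $T$ to $P\setminus Q$ and to $G\setminus S$. Then $|\partial E(Q\cup T)|=a+b+d+e$ and $|\partial E(T)|=c+d+e$, while a direct count in $\G'_P$ gives $B'(P_0)-B(P_0)=a+b+|R_3|-|R_1|\ge a+b-c$ (here $R_1$ is the set of split vertices in $R$ whose unique edge meets $Q$, so $|R_1|\le c$). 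The displayed inequality follows, and with this repair your argument is complete.
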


These two theorems apply as follows: A subset $S$ can be \emph{peeled}
until a subset $S_0$ is obtained that admits no peeling. When that
happens, Theorem~\ref{thm:peeling2} implies that $FR(S_0)$ is as large
as the \F\ ratio of any of its subsets. Since $S$ admits a cooling
function rel $h_0$, so does $S_0$.  Hence, by the cooling theorem, the
set $S_0$ admits a cooling function $c_0$ of cooling norm
$FR(S_0)$. By Theorem~\ref{thm:peeling1}, this cooling function
extends to a cooling function for $S$ of norm $FR(S_0)$. We conclude
that $FR(S_0)$ maximizes the \F\ ratio of subsets of $S$, that
$FR(S_0)$ is the minimal cooling norm for $S$, and that the extension
of $c_0$ is an optimal cooling function for $S$. The following problems
remain:

{\bf Problems.} Give an efficient algorithm for determining when a
peeling exists. Give an efficient algorithm for finding a peeling when
a peeling exists.

\begin{proof}[Proof (\ref{thm:peeling1})] Suppose we are given a
cooling function for $T$ rel $h_0$. We can modify this function so
that it is constant on all boundary edges of $T$ and equal to the norm
$N$ of the function on those edges. It suffices to show that this
modified cooling function extends to a cooling function for $S$ rel
$h_0$, for returning the values of the extension on the boundary edges
of $T$ to their original values will not destroy the cooling
properties of the extension.

From this point, we concentrate on $P'\subset \G'_P$. We extend
$h_0|_P:P\to (0,\infty)$ to $P'$ by defining $h_0(x) = N$ for each
$x\in P'\setminus P$. Lemmas~\ref{lemma:peeling0} and
\ref{lemma:exists} combine to show that $P'$ admits a cooling function
rel $h_0$.  Hence the relative cooling theorem applies
to this situation.  It implies that the smallest possible cooling norm
for $P'$ rel $h_0$ is $M = \max\{FR(P_0)\,|\,P_0 \subset P'\}$.
Lemma~\ref{lemma:alt} implies that $M\le FR(T)\le N$.  Hence a cooling
function for $P'$ with norm $M$ provides an extension of the original
cooling function for $T$.  This proves Theorem~\ref{thm:peeling1}.
\end{proof}

\begin{proof}[Proof (\ref{thm:peeling2})] Let $T$ be a proper subset
of $S$ whose \F\ ratio $N=FR(T)$ is maximal among all subsets of
$S$. Let $P = S \setminus T$, a nonempty subset of $S$. We prove that
$P$ is a peeling for $S$. Indeed, statement 2 of the cooling theorem
implies that there exists a cooling function $c:E(S) \to \R$ for $S$
rel $h_0$ of norm $N$. This function also cools $T$ rel $h_0$. With
the edges of $\partial E(T)$ oriented away from $T$, the values of $c$
on them must be constant with value $N$, for, if some boundary value
carries less than the average $N$ required of the boundary edges of
$T$, then another would have to carry more than the average, a
contradiction.

We return to $\G'_P$ as in the proof of Theorem~\ref{thm:peeling1}.
We again extend $h_0|_P:P\to (0,\infty)$ to $P'$ by defining
$h_0(x)=N$ for each $x\in P'\setminus P$.  The edges of $E(P)$ are in
canonical bijective correspondence with the edges of $\G'_P$, and so
$c$ determines a function $c'$ from the edges of $\G'_P$ to $\R$.  It
is clear that $c'$ cools $P$ viewed as a subset of $\G'_P$, and $c'$
also cools $P'\setminus P$ because every edge of $\partial E(T)$
carries $N$ units of heat away from $T$.  So $c'$ is a cooling
function rel $h_0$ with norm $N$.  Now we apply statement 2 of the
relative cooling theorem to conclude that $FR(P_0)\le N=FR(T)$ for
every $P_0\subset P'$.  Now Lemma~\ref{lemma:alt} implies that $P$ is
a peeling for $S$.
\end{proof}

\section{Building optimal cooling functions for $\Z\oplus \Z$}
\label{sec:building} 
Our eventual goal (unfortunately, only partially
completed in this paper) is to find an efficient algorithm to find a
subset $S_0 \subset S$ having maximum \F\ ratio $FR(S_0)$. The steps
proposed are these:

(1) By peeling, find a subset $T\subset S$ that allows no further
peeling (or at least seems to allow no further peeling).

(2) Prove that $T$ is self-optimal ($FR(T) = \max\{FR(T_0)\,|\,
T_0\subset T\}$) by building a cooling function for $T$ of norm
$FR(T)$.

(3) Extend the cooling function on $T$ to a cooling function on $S$
without increasing the cooling norm.

The relative cooling theorem, as applied in
Theorem~\ref{thm:peeling1},  completes step~3.

In the remainder of this section, we show how these steps can be
carried out for the $n$-ball $B(n)$ in the free Abelian group
$\Z\oplus\Z$ with its standard two-generator generating set.  This
ball is shaped like a diamond. Asymptotically, the F{\o}lner-optimal
subset of $B(n)$ which we find is shaped like a regular octagon (stop
sign). To guess the optimal shape, we simply look at successive \F\
ratios as layers are removed at the corners: right, left, top, and
bottom. Our first task is to determine when one of these edge layers
is in fact a peeling. (Recall Section~\ref{sec:peelings}.) The
theorems that follow will imply that, in these special cases, we need
only compare a single fraction $m/2$ with the \F\ ratio of the set
before the deletion.

{\bf Setting:} We consider the Cayley graph $\G$ of $G=\Z^2$ with
standard generators and initial condition $h_0 \equiv 1$. We consider
$T \subset \Z^2$ finite, $T$ a subset of the lower open half-plane and
containing $([1,m] \times \{-1\})\cap \Z^2$ for some positive integer
$m$. We consider $P = ([1,m] \times \{0\})\cap \Z^2$. We ask whether
$P$ is a peeling for $P\cup T$.  We return to $P'\subset \G'_P$ as
in Section~\ref{sec:peelings}.

\begin{theorem}\label{thm:lattice1}
(1) If $P_0\subset P'\setminus P$, then $H(P_0\cap P)=0$ and
$B'(P_0)-B(P_0)=0$.

(2) If $P_0\subset P'$ and $P_0\not\subset P'\setminus P$, then
$B'(P_0)-B(P_0)>0$ and the maximum 
  \begin{equation*}
\frac{H(P_0\cap P)}{B'(P_0)-B(P_0)}
  \end{equation*}
is realized by the set $P_0 = P'$ with value $m/2$. 

\end{theorem}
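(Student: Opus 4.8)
The plan is to write down the graph $\G'_P$ and the vertex set $P'$ completely explicitly; in this lattice situation they are so simple that both statements reduce to short counting arguments. The edges of $\G$ meeting $P=\{(1,0),\dots,(m,0)\}$ are the $m-1$ \emph{horizontal} edges joining consecutive points of $P$; the two \emph{end} edges, joining $(1,0)$ to $(0,0)$ and $(m,0)$ to $(m+1,0)$; the $m$ \emph{upward} edges joining $(i,0)$ to $(i,1)$; and the $m$ \emph{downward} edges joining $(i,0)$ to $(i,-1)$. No two boundary edges of $P$ share a vertex outside $P$, so the splitting that defines $\G'_P$ changes nothing and $\G'_P=\G_P$. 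Of the vertices of $\G'_P$ lying outside $P$, only the points $(i,-1)$ belong to $T$ — the $(i,1)$ lie above the line $y=0$ and the points $(0,0)$, $(m+1,0)$ lie on it, while $T$ is contained in the open lower half-plane — so $P'=\{(1,0),\dots,(m,0)\}\cup\{(1,-1),\dots,(m,-1)\}$, and in $\G'_P$ each $(i,-1)$ has degree $1$, its only incident edge being the downward edge at $i$.

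For (1), a subset $P_0\subset P'\setminus P$ is a set of points $(j,-1)$ with $j$ ranging over some $J\subset\{1,\dots,m\}$, so $P_0\cap P=\emptyset$ and $H(P_0\cap P)=0$. The only edge of $\G'_P$ meeting such a point $(j,-1)$ is the downward edge at $j$, whose other vertex $(j,0)$ is not in $P_0$; hence each of the $|J|$ points of $P_0$ contributes exactly one edge to $B'(P_0)$ and exactly one unit to $B(P_0)$, so $B'(P_0)=B(P_0)=|J|$ and $B'(P_0)-B(P_0)=0$.

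For (2), I would encode a subset $P_0\subset P'$ by the two sets $I=\{\,i:(i,0)\in P_0\,\}$ and $J=\{\,j:(j,-1)\in P_0\,\}$ in $\{1,\dots,m\}$; the hypothesis $P_0\not\subset P'\setminus P$ is precisely the condition $I\neq\emptyset$. Since $h_0\equiv 1$ we have $H(P_0\cap P)=|I|$ and $B(P_0)=|J|$. To compute $B'(P_0)$, count the edges of $\G'_P$ with exactly one vertex in $P_0$: the horizontal and end edges form a path with vertex set $\{0,1,\dots,m+1\}$ whose endpoints $0$ and $m+1$ never lie in $P_0$, so they contribute $2c$, where $c\ge 1$ is the number of maximal runs of consecutive integers in $I$; each upward edge at $i$ contributes iff $i\in I$, giving $|I|$; and the downward edge at $i$ contributes iff exactly one of $i\in I$, $i\in J$ holds, giving $|I\triangle J|$. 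Hence
$$B'(P_0)-B(P_0)=2c+|I|+|I\triangle J|-|J|=2c+2|I|-2|I\cap J|=2\bigl(c+|I|-|I\cap J|\bigr).$$
Since $c\ge 1$ and $|I\cap J|\le|I|$, the quantity $c+|I|-|I\cap J|$ is at least $1$; this gives $B'(P_0)-B(P_0)>0$, and
$$\frac{H(P_0\cap P)}{B'(P_0)-B(P_0)}=\frac{|I|}{2\bigl(c+|I|-|I\cap J|\bigr)}\le\frac{|I|}{2}\le\frac{m}{2}.$$
Equality in the first inequality forces $c=1$ and $I\subset J$, equality in the second forces $|I|=m$, and together these give $I=J=\{1,\dots,m\}$, i.e. $P_0=P'$; substituting $P_0=P'$ confirms the value is exactly $m/2$.

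I do not expect a genuine obstacle here; the only step needing care is the count of $B'(P_0)$ — in particular, recognizing that the horizontal and end edges together contribute $2c$, and keeping straight which vertices of $\G'_P$ outside $P$ actually belong to $P'$. Everything else is the inequality $c+|I|-|I\cap J|\ge 1$ together with $|I|\le m$. (Combined with Lemma~\ref{lemma:alt}, the theorem then says that in this lattice setting $P$ is a peeling for $P\cup T$ exactly when $m/2\le FR(T)$.)
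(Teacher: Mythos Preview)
Your argument is correct. The identification $\G'_P=\G_P$, the description of $P'$, and the edge count $B'(P_0)=2c+|I|+|I\triangle J|$ are all accurate, and the reduction $B'(P_0)-B(P_0)=2(c+|I|-|I\cap J|)$ together with $c\ge 1$ and $|I\cap J|\le|I|\le m$ yields the bound and the equality case cleanly.

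The paper proves the same result by a different mechanism: rather than computing a closed form, it performs a sequence of local modifications to $P_0$ (making $P_0\cap P$ consecutive, adding $(a,-1)$ beneath each $(a,0)\in P_0$, then adding $(a,0)$ above each stray $(a,-1)$, then filling in the remaining columns) and checks that each move can only increase the fraction, terminating at $P_0=P'$. Your approach has the advantage of producing an exact formula, from which positivity of the denominator, the maximum value, and uniqueness of the maximizer all fall out at once; the paper's exchange-argument style avoids the bookkeeping of the edge count but requires verifying several separate moves. Either route is short in this lattice setting.
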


\begin{corollary}\label{cor:lattice2}  The set $P$ is a
peeling for $S =T\cup P$ if and only if $m/2\le FR(T)$.\end{corollary}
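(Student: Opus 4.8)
The plan is to read the corollary off directly from the definition of peeling together with Theorem~\ref{thm:lattice1}. By definition, $P$ is a peeling for $S=T\cup P$ precisely when
$$\forall\, P_0\subset P',\qquad H(P_0\cap P)\le FR(T)\bigl(B'(P_0)-B(P_0)\bigr),$$
so the task is to verify this family of inequalities, and I would organize it by splitting the index set $\{P_0:P_0\subset P'\}$ according to the dichotomy in Theorem~\ref{thm:lattice1}: either $P_0\subset P'\setminus P$, or $P_0\not\subset P'\setminus P$.

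For $P_0\subset P'\setminus P$ (in particular for $P_0=\emptyset$), statement~(1) of Theorem~\ref{thm:lattice1} gives $H(P_0\cap P)=0$ and $B'(P_0)-B(P_0)=0$, so the required inequality reads $0\le 0$ and holds automatically, imposing no condition. For $P_0$ with $P_0\not\subset P'\setminus P$, statement~(2) gives $B'(P_0)-B(P_0)>0$, so dividing by this positive quantity shows that the required inequality is equivalent to $\frac{H(P_0\cap P)}{B'(P_0)-B(P_0)}\le FR(T)$; and statement~(2) further says the maximum of the left-hand side over all such $P_0$ equals $m/2$ (attained at $P_0=P'$). Hence the inequalities for all such $P_0$ hold simultaneously if and only if $m/2\le FR(T)$.

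Putting the two cases together, the whole family of peeling inequalities holds if and only if $m/2\le FR(T)$, which is the assertion of the corollary. (Equivalently, one could route the argument through Lemma~\ref{lemma:alt} with $N=FR(T)$: the peeling condition then becomes $FR(P_0)\le FR(T)$ for every $P_0\subset P'$, which by Theorem~\ref{thm:lattice1} is automatic when $P_0\subset P'\setminus P$ and otherwise reduces to the single decisive inequality at $P_0=P'$.) There is no genuine obstacle once Theorem~\ref{thm:lattice1} is in hand; the only point requiring a little care is the bookkeeping of the quantifier ``$\forall\,P_0\subset P'$'' through the case split, namely noticing that the degenerate sets $P_0\subset P'\setminus P$ contribute nothing, so that the peeling condition collapses to the one inequality $m/2\le FR(T)$.
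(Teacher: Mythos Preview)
Your proof is correct and is exactly the derivation the paper has in mind: the paper states Corollary~\ref{cor:lattice2} immediately after Theorem~\ref{thm:lattice1} without a separate proof, treating it as a direct consequence. Your case split according to the dichotomy in Theorem~\ref{thm:lattice1}, together with the definition of peeling (or equivalently Lemma~\ref{lemma:alt}), is precisely how the corollary is meant to be read off.
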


\begin{remark}Of course, the search for a peeling $P$ begins not with
$T$ but with $S = T \cup P$. We shall show later that this necessary
inequality is equivalent to the inequality with $FR(S)$ replacing
$FR(T)$ so that, as $P$ changes, we need not calculate the \F\ ratio
of the new $T$ but can instead retain the previously calculated old
ratio $FR(S)$.\end{remark}

\begin{proof}[Proof (Theorem~\ref{thm:lattice1})] Statement 1 and the
inequality $B'(P_0)-B(P_0)>0$ in statement 2 are easy to check.  So
let $P_0\subset P'$ with $P_0\not\subset P'\setminus P$.  We alter
$P_0$ by successive moves that can only increase the fraction being
maximized.

There is a canonical injective graph morphism from $\G'_P$ to $\G$,
which we use to identify $\G'_P$ with a subgraph of $\G$.

Without changing $B(P_0)$ and $H(P_0\cap P)$, we may permute the
elements of $P$ in such a way that all elements of $P_0\cap P$ appear
consecutively beginning at $(1,0)$. This can only decrease $B'(P_0)$
and thereby increase the fraction being maximized. 

If $(a,0)\in P_0$, then we may assume that $(a,-1)\in P_0$, for the
insertion of $(a,-1)$ into $P_0$ will increase $B(P_0)$ by $1$, will
decrease $B'(P_0)$ by $1$, and will leave $H(P_0\cap P)$ unchanged.

If $(a,-1)\in P_0$ but $(a,0)\notin P_0$, then we may assume such
points appear just after the pairs described in the paragraphs
above. Beginning with the left-most exemplar, we insert $(a,0)$ into
$P_0$. This increases $H$ by $1$ and leaves $B(P_0)$ unchanged.
Because $P_0\not\subset P'\setminus P$, it also leaves $B'(P_0)$
unchanged.

If some $(a,0) \in P$ does not appear in $P_0$, then beginning with
the leftmost such $a$, we insert both $(a,0)$ and $(a,-1)$. This
increases $H$ by $1$, $B(P_0)$ by $1$, and $B'(P_0)$ by $1$.

This easily proves Theorem~\ref{thm:lattice1}.
\end{proof}

We proceed to the improvement that allows us to substitute $FR(S)$ for
$FR(T)$. Let $n$ be a nonnegative integer, and let
$k\in\{0,\ldots,n\}$.  We let $B(n,k)$ denote the set remaining after
the first $k$ horizontal layers are removed from $B(n)$ both from top
and bottom and the first $k$ vertical layers are removed from $B(n)$
from both the left and right. The set $B(n,k)$ is an octagon until $k$
reaches $\lceil n/2\rceil$, at which point $B(n,k)$ becomes a square.

It is helpful to allow the notation $*\in \{<,\,=,\,>\}$ for the next
two theorems.

\begin{theorem}\label{thm:lattice3} For each $*
\in\{<,=,>\}$ and for each $k$ with $0 \le k < \lceil n/2\rceil $,
$$\vbox{\halign{
 $#$\hfil&$#$\hfil\hskip .2in\cr
 FR(B(n,k+1))*FR(B(n,k))& \quad\text{ iff }\quad FR(B(n,k+1))*(2k+1)/2\cr
 & \quad\text{ iff }\quad FR(B(n,k))*(2k+1)/2. \cr
 }}$$
 \end{theorem}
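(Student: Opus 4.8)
The plan is to exhibit $FR(B(n,k))$ as the \emph{mediant} of $FR(B(n,k+1))$ and the fraction $(2k+1)/2$, and to read the three equivalences off an elementary fact about mediants: if $b,d>0$, then for every $*\in\{<,=,>\}$,
$$\frac a b\ *\ \frac{a+c}{b+d}\iff\frac a b\ *\ \frac c d\iff\frac{a+c}{b+d}\ *\ \frac c d,$$
since each of the three differences $\frac a b-\frac{a+c}{b+d}$, $\frac a b-\frac c d$, $\frac{a+c}{b+d}-\frac c d$ equals $(ad-bc)$ divided by a positive number, so the three differences always share a sign. I will apply this with $a=|B(n,k+1)|$, $b=|\bd E(B(n,k+1))|$, $c=4(2k+1)$, and $d=8$; here $b,d,b+d>0$ because $B(n,k+1)$ and $B(n,k)$ are nonempty finite subsets of the infinite connected Cayley graph $\G$ of $\Z^2$, hence have nonempty boundary by Lemma~\ref{lemma:exists}. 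Granting the two identities $|B(n,k)|=|B(n,k+1)|+4(2k+1)$ and $|\bd E(B(n,k))|=|\bd E(B(n,k+1))|+8$, the fraction $FR(B(n,k))=\frac{a+c}{b+d}$ is then the mediant of $FR(B(n,k+1))=\frac a b$ and $\frac c d=\frac{4(2k+1)}{8}=\frac{2k+1}{2}$, and the displayed triple equivalence becomes \emph{verbatim} the statement of the theorem, since the theorem's first inequality $FR(B(n,k+1))*FR(B(n,k))$ occurs in the order $\frac a b*\frac{a+c}{b+d}$ and nothing is reversed.

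It remains to establish the two identities, which are purely combinatorial. Writing $B(n,k)=\{(x,y):|x|+|y|\le n,\ |x|\le n-k,\ |y|\le n-k\}$, note that the hypothesis $0\le k<\lceil n/2\rceil$ is exactly the inequality $2k<n$, i.e.\ $k<n-k$, and this one fact drives everything. First, $B(n,k+1)\subset B(n,k)$, and $P:=B(n,k)\setminus B(n,k+1)$ consists of the points of $B(n,k)$ with $|x|=n-k$ or $|y|=n-k$; using $k<n-k$ these form four segments $\{(x,n-k):|x|\le k\}$, $\{(x,-(n-k)):|x|\le k\}$, $\{(n-k,y):|y|\le k\}$, $\{(-(n-k),y):|y|\le k\}$, each of cardinality $2k+1$, and they are pairwise disjoint because $2(n-k)>n$; hence $|P|=4(2k+1)$, which is the first identity. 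Second, delete these four segments one at a time and track the boundary. For the top segment $L=\{(x,n-k):|x|\le k\}$, the boundary edges of $B(n,k)$ meeting $L$ are the $2k+1$ upward edges $(x,n-k)\!\to\!(x,n-k+1)$ together with the two end-edges $(\pm k,n-k)\!\to\!(\pm(k+1),n-k)$, a total of $2k+3$, all of which cease to be boundary edges; meanwhile the edges that become new boundary edges are the $2k+1$ downward edges $(x,n-k)\!\to\!(x,n-k-1)$, whose lower endpoints lie in $B(n,k+1)$ exactly because $|x|\le k\le n-k-1$. So $L$ changes the boundary-edge count by $2k+1-(2k+3)=-2$; the four segments, and the edge-sets charged to them, are disjoint, so the total change is $-8$, which is the second identity.

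Substituting the two identities into the mediant statement completes the proof. I expect the only genuine obstacle to be the boundary-edge bookkeeping giving $-8$: one must confirm, near the corners of the octagon $B(n,k)$ — or of the square $B(n,k+1)$ in the borderline case $k+1=\lceil n/2\rceil$ — that the two end-edges of a deleted segment really exit $B(n,k)$, that the adjacent interior row or column retains its full length $2k+1$ inside $B(n,k+1)$, and that the edge-sets charged to the four segments are pairwise disjoint, each of which again reduces to $2k<n$. The boundary case $k=0$, where $B(n,0)$ is the diamond $B(n)$ and each segment is a single vertex, needs no separate argument: the count still yields $3$ boundary edges lost and $1$ gained at each of the four tips.
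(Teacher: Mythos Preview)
Your proof is correct and follows essentially the same route as the paper's: both establish the two identities $|B(n,k)|=|B(n,k+1)|+4(2k+1)$ and $|\partial E(B(n,k))|=|\partial E(B(n,k+1))|+8$ and then invoke the mediant inequality (the paper calls it ``a bit of algebra''). Your write-up is more explicit on both fronts---you spell out the mediant lemma and carry out the boundary-edge bookkeeping that the paper merely asserts---but the argument is the same.
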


\begin{proof} The set $B(n,k+1)$ is gotten from $B(n,k)$ by removing
four layers from $B(n,k)$.  Removing one layer removes $2k+1$ vertices
and decreases the number of boundary edges by 2.  So $B(n,k)$ has
$x=4(2k+1)$ more elements than $B(n,k+1)$ and $y=8$ more boundary
edges.  Hence if $FR(B(n,k))=\zg_k/\zd_k,$
then
  \begin{equation*}
\frac{\zg_k}{\zd_k}=\frac{\zg_{k+1}+x}{\zd_{k+1}+y}\quad \text{and}
\quad\frac{\zg_{k+1}}{\zd_{k+1}}=\frac{\zg_k-x}{\zd_k-y}.
  \end{equation*}
A bit of algebra now yields that
  \begin{equation*}
\frac{\zg_{k+1}}{\zd_{k+1}}*\frac{\zg_k}{\zd_k}\quad\text{iff}\quad
\frac{\zg_{k+1}}{\zd_{k+1}}*\frac{x}{y}\quad\text{iff}\quad
\frac{\zg_k}{\zd_k}*\frac{x}{y}.
  \end{equation*}
This proves Theorem~\ref{thm:lattice3}.
\end{proof}

\begin{theorem}\label{thm:lattice4} Let $k_0$ be the
smallest integer greater than
$$(2n-\sqrt{2n^2+2n+1})/2.$$ Then $0\le k_0\le n/2$ and $FR(B(n,k))$
increases monotonically on the interval $0\le k \le k_0$ and decreases
monotonically on the interval $k_0\le k \le\lceil n/2\rceil$.
\end{theorem}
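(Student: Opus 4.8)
The plan is to derive a closed form for $FR(B(n,k))$ and then read off the monotonicity from the sign dichotomy already established in Theorem~\ref{thm:lattice3}. First I would record the formula
\[
FR(B(n,k)) = \frac{2n^2+2n+1-4k^2}{4(2n+1-2k)}\qquad\text{for }0\le k\le \lceil n/2\rceil .
\]
This drops out of the proof of Theorem~\ref{thm:lattice3}: passing from $B(n,k)$ to $B(n,k+1)$ deletes $4(2k+1)$ vertices and $8$ boundary edges, so writing $FR(B(n,k))=\zg_k/\zd_k$ in the usual unreduced way we have $\zg_{k+1}=\zg_k-4(2k+1)$ and $\zd_{k+1}=\zd_k-8$ for $0\le k<\lceil n/2\rceil$. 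A routine count gives the initial values $\zg_0=|B(n)|=2n^2+2n+1$ and $\zd_0=|\bd E(B(n))|=8n+4$, and solving the recursions (using $\sum_{j=0}^{k-1}(2j+1)=k^2$) yields the displayed formula; note that the denominator $4(2n+1-2k)$ is positive throughout $k\le\lceil n/2\rceil$.

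Next I would pin down the transition point. Put $\zl=(2n-\sqrt{2n^2+2n+1})/2$, so that $k_0$ is the least integer exceeding $\zl$. Since $2n^2+2n+1$ is odd, $\sqrt{2n^2+2n+1}$ is either irrational or an odd integer, and in either case $2n-\sqrt{2n^2+2n+1}$ is not an even integer; hence $\zl\notin\Z$ and $k_0=\lceil\zl\rceil$. Clearing the positive denominator in the inequality $FR(B(n,k))>\tfrac{2k+1}{2}$ reduces it, after a short computation, to $4k^2-8nk+2n^2-2n-1>0$; this upward-opening quadratic has roots $\zl$ and $\zl'=(2n+\sqrt{2n^2+2n+1})/2$, and since $\zl'>\lceil n/2\rceil$, for $0\le k\le\lceil n/2\rceil$ we conclude
\[
FR(B(n,k))>\tfrac{2k+1}{2}\iff k<\zl,\qquad FR(B(n,k))<\tfrac{2k+1}{2}\iff k>\zl\iff k\ge k_0 ,
\]
equality being excluded because $\zl\notin\Z$. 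Feeding this into Theorem~\ref{thm:lattice3} converts it into a statement about increments: for $0\le k<\lceil n/2\rceil$ one gets $FR(B(n,k+1))>FR(B(n,k))$ when $k<k_0$ and $FR(B(n,k+1))<FR(B(n,k))$ when $k\ge k_0$. Granted $0\le k_0\le\lceil n/2\rceil$ — so that each increment invoked lies in the range where Theorem~\ref{thm:lattice3} applies, with the increasing or the decreasing stretch degenerating to a single point when $k_0=0$ or $k_0=\lceil n/2\rceil$ — this is exactly the assertion that $FR(B(n,k))$ increases on $[0,k_0]$ and decreases on $[k_0,\lceil n/2\rceil]$.

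The remaining task is to bound $k_0$, for which I would establish $-1<\zl\le\frac{n-1}{2}$. The lower bound $\zl>-1$ is equivalent to $\sqrt{2n^2+2n+1}<2n+2$, i.e.\ $0<2n^2+6n+3$, which holds for all $n\ge0$; the upper bound $\zl\le\frac{n-1}{2}$ is equivalent to $n+1\le\sqrt{2n^2+2n+1}$, i.e.\ $0\le n^2$. Hence $0\le\lceil\zl\rceil=k_0\le\lceil\frac{n-1}{2}\rceil$, and a one-line check of the two parities of $n$ gives $\lceil\frac{n-1}{2}\rceil\le n/2$, so $0\le k_0\le n/2$. I expect the arithmetic here to be entirely routine; the only delicate points are the bookkeeping ones: observing that $\zl$ is never an integer — so ``least integer exceeding $\zl$'' coincides with $\lceil\zl\rceil$ and the sequence $FR(B(n,k))$ has no flat stretch to track — and verifying that $k_0$ genuinely lands inside $[0,\lceil n/2\rceil]$ so that Theorem~\ref{thm:lattice3} covers every increment used.
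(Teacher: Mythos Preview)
Your proof is correct and follows essentially the same route as the paper: compute the closed form for $FR(B(n,k))$, reduce the comparison $FR(B(n,k))*\tfrac{2k+1}{2}$ to the quadratic $4k^2-8nk+2n^2-2n-1$, locate the smaller root $\zl$, and feed the resulting sign dichotomy into Theorem~\ref{thm:lattice3}. The only cosmetic differences are in the bookkeeping for the bounds on $k_0$ (the paper checks $p(-1)>0$ and $p((n-1)/2)<0$, you bound $\zl$ directly) and in the argument that equality never occurs (the paper observes $p$ has no integer roots since all terms but the constant are even, you argue $\zl\notin\Z$ from $2n^2+2n+1$ being odd); these are equivalent.
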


\begin{proof} It is an easy matter to calculate $FR(B(n,k))$:
$$FR(B(n,k)) = \fr 2n^2 + 2n + 1 - 4k^2.8n+4-8k..$$
Thus
$$FR(B(n,k)) * \fr 2k+1.2.$$
if and only if
$$2n^2+2n+1-4k^2 \quad*\quad 8nk + 4k - 8k^2 + 4n + 2 - 4k$$
if and only if
$$4k^2 + (-8n)k + (2n^2 - 2n - 1) \quad*\quad 0.$$
Set $p(x)=4x^2-8nx+2n^2-2n-1$.  The smaller root of $p(x)$ is
$$\vbox{\halign{ $\displaystyle #$\hfil&$\displaystyle#$\hfil\cr \fr
8n- \sqrt{64n^2 - 16(2n^2 - 2n - 1)}.8. &= \fr 8n- \sqrt{32n^2 + 32n +
16}.8.\cr &= \fr 2n-\sqrt{2n^2+2n+1}.2.\cr }}.$$ Since the larger root
of $p(x)$ is positive and $p(-1)>0$, it follows that $k_0\ge 0$.  One
verifies that $p(\frac{n-1}{2})<0$.  If $n$ is odd, then this implies
that $k_0\le (n-1)/2<n/2$ and if $n$ is even, then $k_0\le n/2$.  Thus
$0\le k_0\le n/2$.  Moreover, from the last display we conclude that
$FR(B(n,k))>(2k+1)/2$ for $k <k_0$ and $FR(B(n,k))<(2k+1)/2$ for
$k_0\le k \le\lceil n/2\rceil$. (Note that since every term in $p(x)$
is even except the last, $p(x)$ has no integer roots.) Therefore the
desired result follows from Theorem~\ref{thm:lattice3}.
\end{proof}

\begin{theorem}\label{thm:lattice5} For every integer $k$
with $0\le k < k_0$, the set $B(n,k)\setminus B(n,k+1)$ is a peeling
for $B(n,k)$.
\end{theorem}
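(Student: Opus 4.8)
The plan is to reduce this statement — whose content is about the four‑component set $P := B(n,k)\setminus B(n,k+1)$ — to four independent single‑layer peelings, and then to quote Corollary~\ref{cor:lattice2}.

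First I would pin down the geometry. Since $0\le k<k_0\le n/2$ we have $k<\lceil n/2\rceil$, so $B(n,k)$ is an octagon and $P$ is the disjoint union of its four outer layers $L_b,L_t,L_\ell,L_r$ (bottom, top, left, right), each a straight run of $2k+1$ lattice points. Because $n-k>k$, no two of the four layers are joined by an edge of $\G$, and no vertex of $T:=B(n,k+1)$ is adjacent to more than one of them. Hence the split graph $\G'_P$ of Section~\ref{sec:peelings}, and the set $P'$, both decompose as genuine disjoint unions over the four layers, $P'=B_b\amalg B_t\amalg B_\ell\amalg B_r$, where $B_i$ consists of $L_i$ together with the split copies of the vertices of $T$ adjacent to $L_i$, and no edge of $\G'_P$ joins distinct blocks. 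Consequently $H(P_0\cap P)$, $B(P_0)$ and $B'(P_0)$ are all additive over the decomposition $P_0=\coprod_i(P_0\cap B_i)$, and the part of $\G'_P$ meeting $B_i$ is exactly the split graph that governs peeling $L_i$ off $L_i\cup T$.

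Next I would split the peeling condition. By definition $P$ is a peeling for $B(n,k)$ exactly when $H(P_0\cap P)\le FR(T)(B'(P_0)-B(P_0))$ for every $P_0\subset P'$; using the additivity above together with the elementary fact that, for nonnegative $b_i$, the inequality $\sum_i a_i\le N\sum_i b_i$ holds for all choices of the summands separately realizable inside the blocks iff $a_i\le Nb_i$ for each $i$, this is equivalent to requiring, for each $i$, that $L_i$ be a peeling for $L_i\cup T$ (note $(L_i\cup T)\setminus L_i=T$, and the set ``$P'$'' attached to that peeling is precisely $B_i$). A symmetry of the Cayley graph of $\Z^2$ carries each pair $(L_i,T)$ into the configuration of Theorem~\ref{thm:lattice1}: $P=([1,m]\times\{0\})\cap\Z^2$ with $m=2k+1$ and $T$ a finite subset of the lower open half‑plane containing $([1,m]\times\{-1\})\cap\Z^2$ — the containment holds because the row (or column) of $B(n,k+1)$ adjacent to $L_i$ has $2k+3\ge m$ vertices, symmetrically placed about $L_i$. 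Corollary~\ref{cor:lattice2} then gives: $L_i$ is a peeling for $L_i\cup T$ if and only if $m/2=(2k+1)/2\le FR(T)=FR(B(n,k+1))$.

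Combining the two steps, $P$ is a peeling for $B(n,k)$ if and only if $FR(B(n,k+1))\ge (2k+1)/2$. To close, I would invoke the monotonicity already in hand: the proof of Theorem~\ref{thm:lattice4} gives $FR(B(n,k))>(2k+1)/2$ for $k<k_0$, and Theorem~\ref{thm:lattice3} (with $*$ equal to $>$) upgrades this to $FR(B(n,k+1))>FR(B(n,k))>(2k+1)/2$, which finishes the proof. The main obstacle — really the only non‑formal point — is the reduction in the third paragraph: one must verify the geometric claim that for $k<n/2$ the four outer layers of $B(n,k)$ are pairwise non‑adjacent and have pairwise disjoint sets of $T$‑neighbours, so that $\G'_P$ and $P'$ genuinely are disjoint unions and the peeling inequality decouples blockwise. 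Everything after that is additivity of $H$, $B$, $B'$, a mediant‑type inequality, and citations of Corollary~\ref{cor:lattice2}, Theorem~\ref{thm:lattice3}, and the proof of Theorem~\ref{thm:lattice4}.
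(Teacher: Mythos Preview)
Your proposal is correct and follows essentially the same route as the paper: decompose $\G'_P$ into four disjoint pieces (one per layer) using $k<k_0\le n/2$, apply Theorem~\ref{thm:lattice1}/Corollary~\ref{cor:lattice2} to each layer to reduce the peeling condition to $(2k+1)/2\le FR(B(n,k+1))$, and then verify this inequality via Theorems~\ref{thm:lattice3} and~\ref{thm:lattice4}. You spell out the additivity of $H$, $B$, $B'$ and the blockwise reduction more explicitly than the paper does, but the argument is the same.
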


\begin{proof} Theorem~\ref{thm:lattice4} implies that $FR(B(n,k))$
increases monotonically on the interval $0\le k\le k_0$.  This and
Theorem~\ref{thm:lattice3} imply that $FR(B(n,k+1))\ge (2k+1)/2$ if
$0\le k<k_0$.  Let $P=B(n,k)\setminus B(n,k+1)$.  Because $k_0\le
n/2$, it follows that the graph $\G'_P$ is the disjoint union of four
graphs, one for each of the four layers in $P$.
Theorem~\ref{thm:lattice1} applies to each of these four layers, and
we see that its conclusion holds even for $P$.  As in
Corollary~\ref{cor:lattice2}, it follows that $P$ is a peeling for
$B(n,k)$ if and only if $m/2\le FR(B(n,k+1))$, where $m=2k+1$.  
This completes the proof of Theorem~\ref{thm:lattice5}.
\end{proof}

\begin{theorem}\label{thm:lattice6} The set $B(n,k_0)$ is
asymptotically a regular octagon.
\end{theorem}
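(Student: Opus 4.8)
The plan is to read ``asymptotically a regular octagon'' as the statement that the rescaled sets $\frac1n B(n,k_0)$ converge, in the Hausdorff metric, to a regular octagon $\Omega\subset\R^2$, and to verify this from two ingredients already available: an explicit description of $B(n,k)$ and the asymptotics of $k_0$. First I would unwind the definition of $B(n,k)$ --- remove the first $k$ horizontal layers from the top and bottom, and the first $k$ vertical layers from the left and right, of the diamond $B(n)=\{|x|+|y|\le n\}$ --- to get
$$B(n,k)=\bigl\{(x,y)\in\Z^2:\ |x|+|y|\le n,\ |x|\le n-k,\ |y|\le n-k\bigr\},$$
i.e.\ the set of lattice points of the planar region $P_{n,k}=\{(x,y)\in\R^2:|x|+|y|\le n\}\cap[-(n-k),\,n-k]^2$. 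For $0<k<\lceil n/2\rceil$ this $P_{n,k}$ is an octagon: four axis-parallel sides, such as $\{x=n-k,\ |y|\le k\}$ (Euclidean length $2k$), and four sides of slope $\pm1$ lying on $|x|+|y|=n$, such as the segment from $(n-k,k)$ to $(k,n-k)$ (length $(n-2k)\sqrt2$). At every vertex an axis-parallel side meets a side of slope $\pm1$, so every interior angle of $P_{n,k}$ equals $135^\circ$, for every admissible $k$; thus the limiting octagon will automatically be equiangular and ``regular'' will come down to ``equilateral''.

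Next I would determine $\lim_{n\to\infty}k_0/n$. Writing $2n^2+2n+1=2(n+\frac12)^2+\frac12$ and expanding the square root, $\sqrt{2n^2+2n+1}=\sqrt2\,n+\frac1{\sqrt2}+O(1/n)$, so the quantity $(2n-\sqrt{2n^2+2n+1})/2$ of Theorem~\ref{thm:lattice4} equals $(1-\frac1{\sqrt2})n-\frac{\sqrt2}{4}+O(1/n)$. Hence the least integer $k_0$ exceeding it satisfies $k_0=(1-\frac1{\sqrt2})n+O(1)$, so $k_0/n\to t$, where $t:=1-\frac1{\sqrt2}$. Since $t\in(0,\frac12)$, this also gives $0<k_0<\lceil n/2\rceil$ for all large $n$, so $B(n,k_0)$ is genuinely an octagon for large $n$, consistent with Theorem~\ref{thm:lattice4}.

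Then I would pass to the limit. Since $B(n,k_0)$ is exactly the set of lattice points of the convex region $P_{n,k_0}$, whose inradius tends to $\infty$, the set $\frac1n B(n,k_0)$ lies within Hausdorff distance $O(1/n)$ of $\frac1n P_{n,k_0}=\{|u|+|v|\le1\}\cap[-(1-k_0/n),\,1-k_0/n]^2$. As $k_0/n\to t$, the latter converges in the Hausdorff metric to $\Omega:=\{(u,v):|u|+|v|\le1\}\cap[-(1-t),\,1-t]^2$, so $\frac1n B(n,k_0)\to\Omega$. It remains to check that $\Omega$ is regular: its vertices are $(\pm(1-t),\pm t)$ and $(\pm t,\pm(1-t))$, its axis-parallel sides have length $2t$, its slope-$\pm1$ sides have length $(1-2t)\sqrt2$ (with $1-2t>0$ since $t<\frac12$), and all eight interior angles equal $135^\circ$ by the remark above; hence $\Omega$ is regular precisely when $2t=(1-2t)\sqrt2$, i.e.\ when $2t(1+\sqrt2)=\sqrt2$, i.e.\ when $t=\frac{\sqrt2}{2(1+\sqrt2)}=1-\frac1{\sqrt2}$ --- exactly the value forced by the asymptotics of $k_0$. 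This completes the argument.

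The only genuine choice here is the precise formalization of ``asymptotically'': the Hausdorff statement above seems cleanest, but convergence of the convex hulls, or simply the statement that the ratio $\frac{2k_0}{(n-2k_0)\sqrt2}$ of the two side lengths of $B(n,k_0)$ tends to $1$, are equivalent and could be substituted. The rest is routine estimation; the one point I would check most carefully is the numerical coincidence that the limit $t=1-1/\sqrt2$ of $k_0/n$ is exactly the ratio equalizing the two side lengths of the limiting octagon --- which is of course no coincidence, since $k_0$ was defined to maximize the \F\ ratio.
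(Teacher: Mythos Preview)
Your argument is correct and follows essentially the same route as the paper: describe $B(n,k)$ as an octagon with axis-parallel sides of length $\approx 2k$ and diagonal sides of length $\approx (n-2k)\sqrt2$, compute the asymptotic $k_0/n\to 1-1/\sqrt2$ (the paper equivalently computes $n/k_0\to 2/(2-\sqrt2)$), and check that this value equalizes the two side lengths. The only difference is packaging --- you formalize ``asymptotically'' via Hausdorff convergence of $\frac1n B(n,k_0)$, whereas the paper simply verifies $\ell/k_0\to 2$ directly with $\approx$ signs --- but the computation and the idea are the same.
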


\begin{proof} The set is clearly an octagon with two horizontal sides
of lengths approximately $2k_0$, two vertical sides of lengths
approximately $2k_0$, and four diagonal sides at angles of forty-five
degrees from the axes, each having (as yet unknown) approximate length
$\ell$. We want to show that $\ell \approx 2k_0$. Each diagonal side
is the hypotenuse of an isosceles right triangle whose legs are
approximately of length $n - 2k_0$. We calculate:
$$\fr n.k_0. \approx \fr 2n.2n-\sqrt{2n^2+2n+1}. = \fr
2. 2-\sqrt{2+2/n+1/n^2}.\approx \fr 2.2-\sqrt{2}..$$ Thus,
$$\fr \ell.k_0. \approx \sqrt{2}\fr (n-2k_0).k_0.\approx
\sqrt{2}\bigg(\fr 2.2-\sqrt{2}.-2\bigg)=2.$$We conclude that the
octagon is almost regular for large values of $n$.
\end{proof}

\begin{theorem}\label{thm:lattice7} Let $k_0$ be the
smallest integer greater than
$$(2n-\sqrt{2n^2+2n+1})/2.$$ Then $B(n,k_0)$ is a F{\o}lner-optimal
subset of $B(n)$. That is, the \F\ ratio of $B(n,k_0)$ is the maximal
\F\ ratio of subsets of $B(n)$. Every optimal cooling function on
$B(n,k_0)$ can be extended to a cooling function on $B(n)$ without
increasing the cooling norm.
\end{theorem}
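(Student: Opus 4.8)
The plan is to combine the peeling chain already in hand with the cooling theorem and the extension result Theorem~\ref{thm:peeling1}; the one genuinely new ingredient is an explicit optimal cooling function on the octagon $B(n,k_0)$. First I would record the reduction. By Theorem~\ref{thm:lattice5}, for each $k$ with $0\le k<k_0$ the corner layer $B(n,k)\setminus B(n,k+1)$ is a peeling for $B(n,k)$, so
$$B(n)=B(n,0)\supset B(n,1)\supset\cdots\supset B(n,k_0)$$
is a chain in which each set arises from its predecessor by deleting a peeling. By Theorem~\ref{thm:peeling1} a cooling function on $B(n,k+1)$ extends to a cooling function on $B(n,k)$ without increasing its norm, so, iterating up the chain, a cooling function on $B(n,k_0)$ of norm $\nu$ yields a cooling function on $B(n)$ of norm at most $\nu$. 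Suppose we knew that $B(n,k_0)$ carries a cooling function of norm $FR(B(n,k_0))$. Then so does $B(n)$, so by Theorem~\ref{thm:cooling}(2) the number $\max\{FR(T):T\subset B(n)\}$, which is the minimum cooling norm of $B(n)$, is at most $FR(B(n,k_0))$; since it is trivially at least $FR(B(n,k_0))$, equality holds and $B(n,k_0)$ is \F-optimal in $B(n)$. The final sentence of the theorem is then immediate: by Theorem~\ref{thm:peeling1} iterated up the chain above, any cooling function on $B(n,k_0)$, in particular any optimal one, extends to a cooling function on $B(n)$ without increasing its norm. So everything comes down to showing that $B(n,k_0)$ is \emph{self-optimal}, that is, that it carries a cooling function of norm $FR(B(n,k_0))=(2n^2+2n+1-4k_0^2)/(8n+4-8k_0)$, the value given by the formula in the proof of Theorem~\ref{thm:lattice4}. (By Corollary~\ref{cor:lattice2} and Theorems~\ref{thm:lattice3} and~\ref{thm:lattice4}, the next corner layer is not a peeling for $B(n,k_0)$, so $B(n,k_0)$ is the natural place to stop peeling.)

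To prove self-optimality it suffices, by Lemma~\ref{lemma:lowerbd} and Theorem~\ref{thm:cooling}(2), to exhibit one cooling function on $B(n,k_0)$ of norm $N:=FR(B(n,k_0))$: every cooling function on $B(n,k_0)$ has norm at least $N$, so one of norm exactly $N$ forces $\max\{FR(T_0):T_0\subset B(n,k_0)\}$ to equal $N$. I would construct such a function layer by layer, using the relative cooling theorem: begin with an optimal cooling function on a small central square, whose optimality is elementary, and then adjoin the nested octagonal layers one at a time, at each stage extending the current cooling function to the enlarged set without letting its norm exceed $N$. What must be checked at each stage is that the newly adjoined layer, carrying the boundary data it inherits, has relative \F\ ratios bounded by $N$; the tightest constraint falls on the edges crossing the long horizontal and vertical sides near the center of the diamond, and the choice of $k_0$ as the least integer above $(2n-\sqrt{2n^2+2n+1})/2$ is exactly what makes this balance at $N$. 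Equivalently, one can forgo the layering and write down a single flow on $B(n,k_0)$ that routes the one unit of heat required at each vertex outward through the octagonal shells, dividing the accumulated heat evenly among the outward edges at each shell, and verify directly that no edge carries more than $N$.

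The main obstacle is precisely this construction together with its norm estimate: producing a concrete flow, or a concrete layer-by-layer extension, on the octagon and checking combinatorially that every edge, interior and boundary, carries at most $N$ units of heat, a somewhat delicate count in which the exact value of $k_0$ is essential. Everything else, namely the reduction above, the peeling chain, the extension via Theorem~\ref{thm:peeling1}, and the passage from self-optimality to \F-optimality in $B(n)$, is a formal consequence of results already proved.
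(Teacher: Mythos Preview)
Your reduction is exactly the paper's: the peeling chain for $0\le k<k_0$ (Theorem~\ref{thm:lattice5}) together with Theorem~\ref{thm:peeling1} reduces everything to the self-optimality of $B(n,k_0)$, and your derivation of \F-optimality and the extension statement from self-optimality is correct and matches the paper's Steps~(1)--(3).

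The gap is in your proposed mechanism for self-optimality. Your primary suggestion---extend outward from the central square through nested octagonal layers using the relative cooling theorem, checking at each stage that the layer's relative \F\ ratios are bounded by $N=FR(B(n,k_0))$---does not go through as stated. For $k_0\le k<\lceil n/2\rceil$ the layer $P=B(n,k)\setminus B(n,k+1)$ has rows of length $m=2k+1$, and the controlling quantity (Theorem~\ref{thm:lattice1}) is $m/2=(2k+1)/2$. But the proof of Theorem~\ref{thm:lattice4} gives $FR(B(n,k))<(2k+1)/2$ for every $k\ge k_0$; in particular $(2k+1)/2>N$ for all these $k$. So with boundary data $h_0=N$ on $P'\setminus P$, Lemma~\ref{lemma:alt} says the relative \F\ ratio of $P'$ already exceeds $N$, and the relative cooling theorem gives no cooling function on $P'$ of norm~$\le N$. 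In other words, the inner layers are \emph{not} peelings (this is exactly what distinguishes $k\ge k_0$ from $k<k_0$), and the extension machinery of Section~\ref{sec:peelings} cannot be used in this direction. Your sentence ``the choice of $k_0$\ldots is exactly what makes this balance at~$N$'' has the inequality backwards: $k_0$ is where the peeling condition first \emph{fails}.

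The paper handles this by abandoning the extension framework for the inner region and instead proving directly that \emph{each} $B(n,k)$ with $k_0\le k\le\lceil n/2\rceil$ is self-optimal (Theorem~\ref{thm:lattice11}). The induction starts at the square (Theorem~\ref{thm:lattice10}) and at each step constructs a cooling function on $B(n,k)$ of norm exactly $FR(B(n,k))$---a norm that \emph{increases} as $k$ decreases toward $k_0$---from one on $B(n,k+1)$. The device is not the relative cooling theorem but the ``difference diagram'' and ``diagram of second differences'' (Theorem~\ref{thm:lattice8} and Corollary~\ref{thm:lattice9}): one records the required change in heat-loss at each vertex when a single new row is adjoined, and then carries out a delicate token-distribution argument (governed by the inequalities of Lemma~\ref{lemma:lattice12}) to realize those changes while keeping all partial sums along orbits in the allowed range. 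This construction is the real content of the proof, and it is quite different from either a peeling-style extension or a radial averaging flow; for instance, it produces cooling functions that necessarily oscillate (take negative difference values) along some rows, already for $B(8)$.
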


\begin{remark} The following are the major steps in the proof.

(1) The set $B(n,\lceil n/2\rceil)$ is a square and is self-optimal
because every $xy$-rectangle in $\Z \oplus \Z$ is self-optimal.

(2) If $k_0\le k<\lceil n/2 \rceil$ and $B(n,k+1)$ is self-optimal,
then $B(n,k)$ is also self-optimal.

(3) If $0\le k<k_0$, then $B(n,k)\setminus B(n,k+1)$ is a peeling for
$B(n,k)$. Hence each cooling function on $B(n,k+1)$ can be extended to
a cooling function on $B(n,k)$ without increasing the cooling norm.

Steps (1) and (2) will show that $B(n,k_0)$ is self-optimal. Step~(3)
will then show that $B(n,k_0)$ is a F{\o}lner-optimal subset of
$B(n)$.

Step~(3) is Theorem~\ref{thm:lattice5}. Step (1) is easy and will be
carried out next.  Step~(2) requires a fair amount of further work
which we shall carry out after Step~(1).
\end{remark}

For both Step~(1) and Step~(2) of the proof, we will manipulate what
we call \emph{difference diagrams}. For the first time in this paper
we will make use of the fact that we are considering the Cayley graph
$\G = \G(G,C)$ of an infinite group $G$ with finite, torsion-free
generating set $C$. If $S$ is a finite subset of the set of vertices
$G$ of $\G$, and if $f:E(S)\to \R$ is a cooling function, then we
associate with $f$ a \emph{difference diagram} $D$ as follows:
$$D:S\times C \to \R$$
$$D(s,c) = f((s,c,sc))-f((sc\inv,c,s)).$$ That is, $D(s,c)$ gives the
net heat loss at $s$ along the orbit of $G$ defined by the group
generator $c$. The difference diagram is a refinement of the heat loss
function at a vertex that we used in previous sections. We call $D$ a
diagram since its values can be recorded at the vertices of the graph
$\G$ in a pictorial diagram. In order to make the relationship
completely clear with the heat loss function $h(s)$ defined earlier,
we note that
$$\vbox{\halign{
 $\displaystyle#$\hfil&$\displaystyle#$\hfil\cr
h(s) = \sum_{i(e) = s}f(e) - \sum_{t(e) = s}f(e) 
  =\sum_{c\in C}D(s,c).\cr
 }}$$
We need only this simple difference diagram in carrying out Step~(1)
of the proof. However, in Step~(2) we shall consider the difference
between difference diagrams defined on two different sets, one
containing the other. In order to do this, we shall have to extend by
$0$ the difference diagram on the smaller set so that the two
difference diagrams have the same domain of definition. This new
difference will be called a \emph{diagram of second differences}.

\begin{theorem}\label{thm:lattice8} The cooling function $f$
is determined by its difference diagram and the values of $f$ on $\bd
E(S)$.
\end{theorem}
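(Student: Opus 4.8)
The plan is to show that the cooling function $f:E(S)\to\R$ is uniquely recovered from the difference diagram $D:S\times C\to\R$ together with the boundary data $f|_{\bd E(S)}$. Since $\G$ is the Cayley graph of an infinite group $G$ with torsion-free generating set $C$, each generator $c\in C$ partitions the vertices of $\G$ into the orbits of the cyclic subgroup $\langle c\rangle$, and since $c$ has infinite order these orbits are bi-infinite lines $\ldots,\, sc\inv,\, s,\, sc,\, sc^2,\ldots$. The edges of $\G$ labeled $c$ are exactly the edges $(sc^{j},c,sc^{j+1})$ along these lines. So it suffices, for a fixed $c$, to recover the values of $f$ on all $c$-labeled edges of $E(S)$ from $D(\cdot,c)$ and the boundary values; the argument is then the same for each generator.

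First I would fix a generator $c$ and a single $\langle c\rangle$-orbit $\ell$, and look at the $c$-labeled edges of $\G$ that meet $\ell$ and lie in $E(S)$, i.e. have at least one endpoint in $S\cap\ell$. Because $S$ is finite, $S\cap\ell$ is a finite subset of the bi-infinite line $\ell$, so the relevant edges form finitely many maximal runs of consecutive $c$-edges; within each run, the end edges are boundary edges (exactly one endpoint in $S$) while the interior edges have both endpoints in $S$. Along such a run $e_0,e_1,\ldots,e_r$ (oriented along $\ell$), the key identity is that $D(s,c)=f(e_{\text{in to }s})-f(e_{\text{out of }s})$ is precisely the telescoping difference of $f$-values at consecutive edges of the run. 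Knowing $f$ on the boundary edge $e_0$ (from $f|_{\bd E(S)}$) and the value $D(s_1,c)$ at the first interior vertex $s_1$ determines $f(e_1)$; then $D(s_2,c)$ determines $f(e_2)$; and so on down the run. Thus every $c$-edge value in the run is forced.

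Carrying this out: for each $c\in C$ and each orbit $\ell$, enumerate the maximal runs of $c$-edges in $E(S)$ meeting $\ell$; each run has a first edge that is a boundary edge with $f$-value supplied by $f|_{\bd E(S)}$, and then march along the run using $f(e_{k+1}) = f(e_k) - D(s_{k+1},c)$ (with the sign depending on the chosen orientation conventions $i(e),t(e)$), where $s_{k+1}$ is the vertex shared by $e_k$ and $e_{k+1}$. Since every edge of $E(S)$ is the $c$-edge of a unique such run for a unique $c$, this recovers $f$ on all of $E(S)$. The recursion is well-defined and unique precisely because torsion-freeness guarantees the orbits are lines, not cycles — on a cycle there would be a consistency condition and possibly a one-parameter family of solutions, so this hypothesis is genuinely used.

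The only real obstacle is bookkeeping with orientations: the definition $D(s,c)=f((s,c,sc))-f((sc\inv,c,s))$ uses a fixed "rightward" orientation along each orbit, whereas the permanent edge-orientation convention for $E(S)$ (with boundary edges oriented out of $S$) may disagree with this on some edges, introducing sign flips. I would handle this by working orbit-by-orbit with the canonical orbit orientation throughout, recording at the end that each edge's $f$-value in the original orientation differs by at most an overall sign that is determined by the (fixed, known) orientation data — so no information is lost. With that caveat dispatched, the recursion along each finite run is immediate and the theorem follows. I would also note explicitly that a run consisting of a single boundary edge is the base case with nothing to recurse, and that every run has at least one endpoint outside $S$ (so at least one boundary edge to start from) because $S\cap\ell$ is finite, which is exactly where finiteness of $S$ enters.
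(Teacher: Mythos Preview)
Your argument is correct and is essentially the paper's own proof: walk along each $c$-orbit, take the boundary edge at the start of each maximal run as the known base value, and recover the remaining edge values by the telescoping identity coming from the definition of $D$. You are in fact more careful than the paper about multiple disjoint runs along a single orbit and about orientation bookkeeping (though note that your verbal description of $D$ as ``in minus out'' has the sign reversed from the definition $D(s,c)=f((s,c,sc))-f((sc^{-1},c,s))$, which is why your recursion carries a minus where the paper's has $f(e_{i+1})=f(e_i)+D(t(e_i),c)$).
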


\begin{corollary}\label{thm:lattice9} If $S$ is self-optimal
and $f:E(S)\to \R$ is a cooling function of norm $FR(S)$, then $f$ is
determined by its difference diagram.
\end{corollary}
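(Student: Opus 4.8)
The plan is to deduce this from Theorem~\ref{thm:lattice8}, which already asserts that $f$ is determined by its difference diagram together with the restriction $f|_{\bd E(S)}$ to the boundary edges. So it suffices to show that the two hypotheses --- that $S$ is self-optimal and that $|f| = FR(S)$ --- force the boundary values to be completely determined; in fact they force $f(e) = FR(S)$ for every $e\in\bd E(S)$, with the standing orientation convention that boundary edges point out of $S$.

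To establish this I would rerun the saturation argument that already appears in the proofs of Theorem~\ref{thm:peeling2} and Theorem~\ref{thm:maxl}. Summing the vertex inequality $h(s)\ge h_0(s)$ over all $s\in S$, and observing that every interior edge $e\in E(S)\setminus\bd E(S)$ contributes $+f(e)$ at its initial vertex and $-f(e)$ at its terminal vertex, so cancels, we are left with the boundary contributions only:
$$\sum_{e\in\bd E(S)} f(e) \;=\; \sum_{s\in S} h(s) \;\ge\; H(S) \;=\; FR(S)\cdot|\bd E(S)|.$$
On the other hand, $|f(e)|\le FR(S)$ for every $e$ because $f$ has cooling norm $FR(S)$, so the left-hand sum of $|\bd E(S)|$ terms is at most $FR(S)\cdot|\bd E(S)|$. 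Equality throughout forces $f(e)=FR(S)$ for every boundary edge $e$. (Self-optimality is what makes the situation non-vacuous: by Lemma~\ref{lemma:lowerbd}, a cooling function of norm $FR(S)$ can exist only when $FR(S)$ is already the maximum F{\o}lner ratio among subsets of $S$.)

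With the boundary values of $f$ now pinned to the single constant $FR(S)$, Theorem~\ref{thm:lattice8} applies verbatim and yields that $f$ is determined by its difference diagram alone, which is the assertion of the corollary. I do not anticipate a genuine obstacle here: the corollary is essentially immediate once Theorem~\ref{thm:lattice8} is in hand, the only new ingredient being the one-line telescoping computation above. The single point that warrants care is the bookkeeping of edge orientations, so that the interior edges really do cancel in pairs and the surviving boundary terms carry the correct sign --- which is precisely the purpose of the convention that each edge of $\bd E(S)$ is oriented with its initial vertex in $S$.
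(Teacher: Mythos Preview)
Your proposal is correct and follows the same route as the paper: reduce to Theorem~\ref{thm:lattice8} by showing that the boundary values are forced to equal the constant $FR(S)$. The paper's proof simply asserts this fact in one line (it having been established by the same telescoping-and-saturation argument earlier, in the proofs of Theorems~\ref{thm:maxl} and~\ref{thm:peeling2}), whereas you spell out the computation explicitly; the content is identical.
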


\begin{proof}[Proof (\ref{thm:lattice8} and \ref{thm:lattice9})] The
corollary follows immediately from the theorem since, in the case of
self-optimal sets $S$, the boundary values of optimal cooling functions
are constant and equal to $FR(S)$.

In general, since every generator $c\in C$ has infinite order and the
set $S$ is finite, every $c$-orbit determines edges
$e_1,\,e_2,\,\ldots,e_n$, with the initial vertex $i(e_1)$ of $e_1$
and the terminal vertex $t(e_n)$ of $e_n$ lying in the
complement of $S$, all other vertices lying in $S$ and $t(e_i)=i(e_i)c$
for $i\in\{1,\ldots,n\}$. The edge $e_1$ is a boundary edge so that
$f(e_1)$ is given. Inductively, $f(e_{i+1}) = f(e_i)+D(t(e_i),c)$.
\end{proof}

\begin{remark}[(Important!)]  In the self-optimal case, it is important
to realize that heat loss is always outwards at the boundary. This
means that in the previous paragraph, the first value $f(e_1)= -FR(S)$
must be thought of as a negative number and the terminal value
$f(e_n)=FR(S)$ as a positive value, with intermediate values $f(e_i)$
increasing, not necessarily monotonically, from the one value to the
other. It is important that in its possibly oscillatory traverse from
the negative value to the positive value, the values $f(e_i)$ never
stray from the interval $[-FR(S),FR(S)]$. In terms of the difference
diagram, this means that the partial sums along initial segments of
the orbit remain in the interval $[0,2\cdot FR(S)]$, with the total
sum equalling $2\cdot FR(S)$.

In the simplest cases, along each orbit the difference diagram
exhibits a \emph{partition} of $[0,2\cdot FR(S)]$ ; that is, the
values of $D$ are positive and sum to $2\cdot FR(S)$. However, a
partition does not always suffice. When finding a F{\o}lner-optimal
subset of $B(n)$, we shall often have to employ an oscillatory
traverse along certain orbits with $D$ exhibiting both positive and
negative values. The first case where this seems to be necessary is
$B(8)$.
\end{remark}

\begin{theorem}\label{thm:lattice10} In $G = \Z \oplus \Z =
\left<a,b\,|\,aba\inv b\inv = 1\right>$, every $xy$-rectangle $R$ is
self-optimal.
\end{theorem}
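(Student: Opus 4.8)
The plan is to establish self-optimality of an arbitrary $xy$-rectangle $R$ by producing, on $R$, a cooling function rel $h_0\equiv 1$ whose norm equals $FR(R)$. Write $R = \{1,\dots,p\}\times\{1,\dots,q\}$, so $|R| = pq$, $|\bd E(R)| = 2p+2q$, and $N := FR(R) = pq/(2p+2q)$. Once such a cooling function is in hand, Lemma~\ref{lemma:lowerbd}, applied to every subset $R_0\subset R$, gives $FR(R_0)\le N = FR(R)$; since $FR(R)$ itself occurs among these ratios, this is exactly the assertion $FR(R) = \max\{FR(R_0):R_0\subset R\}$, i.e. that $R$ is self-optimal. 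So the entire proof reduces to the explicit construction.

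To construct the cooling function I would work entirely with difference diagrams, invoking Theorem~\ref{thm:lattice8} to pass back and forth between a cooling function and the pair (difference diagram, boundary values). Let $a$ and $b$ denote the horizontal and vertical generators. I prescribe the constant difference diagram $D(s,a) = q/(p+q)$ and $D(s,b) = p/(p+q)$ for every $s\in R$. Then the net heat loss at each vertex is $h(s) = D(s,a)+D(s,b) = 1$, so the associated edge function $f$ automatically satisfies the vertex conditions rel $h_0\equiv 1$. What remains is to reconstruct $f$ from $D$ along each orbit via $f(e_{i+1}) = f(e_i)+D(t(e_i),c)$ and to check that $|f(e)|\le N$ everywhere. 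Since every edge of $E(R)$ lies on a unique horizontal orbit — a row of $p$ consecutive vertices of $R$ — or a unique vertical orbit — a column of $q$ such vertices — it suffices to examine these two kinds of orbit.

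Along a row I set the incoming boundary edge to the value $-N$; the successive edge values are then $-N + k\cdot q/(p+q)$ for $k = 0,1,\dots,p$. The point is that $q/(p+q) = 2N/p$, so the last (outgoing boundary) value is $-N + p\cdot q/(p+q) = N$, and the values increase monotonically from $-N$ to $N$, hence stay in $[-N,N]$. The identical computation along a column, with $p/(p+q) = 2N/q$, produces the values $-N + k\cdot p/(p+q)$ for $k = 0,\dots,q$, again running monotonically from $-N$ up to $N$. Therefore $f$ is a cooling function on $R$ rel $h_0\equiv 1$ with $|f| = N = FR(R)$, and combined with the first paragraph this proves the theorem.

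The one point that needs care — and the only place the hypotheses enter — is the consistency of the boundary data: once the incoming value $-N$ and the diagram $D$ are fixed on an orbit, the outgoing boundary value is forced to equal $-N$ plus the sum of $D$ along that orbit, so one must check that this forced value is again $+N$ and that no partial sum leaves $[-N,N]$. Requiring orbit sums equal to $2N$ means exactly $D(s,a) = 2N/p$ and $D(s,b) = 2N/q$; these are legitimate values of a cooling-function difference diagram precisely when $D(s,a)+D(s,b)\ge 1$, i.e. when $2N/p + 2N/q\ge 1$, i.e. when $N\ge pq/(2p+2q) = FR(R)$, with equality at $N = FR(R)$. So the genuine content is just the observation that a separable heat flow — a fixed quantity pumped horizontally plus a fixed quantity pumped vertically — saturates the lower bound of Lemma~\ref{lemma:lowerbd} exactly at $N = FR(R)$; I do not anticipate any obstacle beyond this bookkeeping and the attendant care with edge orientations, which the difference-diagram formalism handles harmlessly.
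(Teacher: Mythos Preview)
Your proposal is correct and follows essentially the same approach as the paper: both construct an explicit cooling function via a constant difference diagram with $D(s,a)=q/(p+q)$ and $D(s,b)=p/(p+q)$ (the paper writes these as $2n$ and $2m$ in units of $1/(2(m+n))$), verify that each vertex loses exactly one unit of heat, and check that along every row and column the partial sums partition the interval $[-FR(R),FR(R)]$. Your version is slightly more explicit about the edge values and about invoking Lemma~\ref{lemma:lowerbd} to deduce self-optimality, but the argument is the same.
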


\begin{proof} Suppose $R$ denotes the $xy$-rectangle 
$([1,m]\times[1,n])\cap \Z^2$. Then
$$FR(R) = \fr m\cdot n.2(m+n)..$$ We express our cooling function in
terms of the unit $1/(2(m+n))$ so that our difference diagram can have
integral entries. Boundary values are thus given (in terms of the
prescribed units) by $\pm m\cdot n$ and the difference diagram must
have initial partial sums that traverse from $0$ to $2\cdot m\cdot n$,
never leaving the interval $[0,2\cdot m\cdot n]$. The negative sign is
used near initial points of orbits, the positive sign near terminal
points of orbits. A difference diagram is given by the function
$$D((x,y),a)=2\cdot n \quad\text{and}\quad
D((x,y),b)= 2\cdot m.$$Since $D((x,y),a) + D((x,y),b) = 2\cdot
(m+n)$, each vertex is cooled by exactly $2(m+n)$ fractional units,
hence by exactly $1$ real unit. Along $a$-orbits and $b$-orbits, the
difference diagram partitions $2\cdot m\cdot n$. Hence, the cooling
norm is $FR(R)$.
\end{proof}

\begin{theorem}\label{thm:lattice11} The set $B(n,k)$ is
self-optimal for all integers $k$ in the interval $[k_0,\lceil
n/2\rceil]$.
\end{theorem}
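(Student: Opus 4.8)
The plan is to prove Theorem~\ref{thm:lattice11} by downward induction on $k$, from $k=\lceil n/2\rceil$ down to $k=k_0$ (the range is nonempty, since $0\le k_0\le n/2\le\lceil n/2\rceil$ by Theorem~\ref{thm:lattice4}). For the base case $k=\lceil n/2\rceil$ the set $B(n,\lceil n/2\rceil)$ is a square, hence an $xy$-rectangle, so Theorem~\ref{thm:lattice10} gives that it is self-optimal. The inductive step is exactly Step~(2) in the outline for Theorem~\ref{thm:lattice7}: assuming $k_0\le k<\lceil n/2\rceil$ and that $B(n,k+1)$ is self-optimal, I would show that $B(n,k)$ is self-optimal, i.e.\ $FR(B(n,k))=\max\{FR(T_0):T_0\subset B(n,k)\}$. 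A combinatorial bound on this maximum looks unpromising because $FR$ is not monotone under passage to subsets, and the peeling machinery does not apply here either: by the peeling criterion (Corollary~\ref{cor:lattice2}, Theorem~\ref{thm:lattice5}) together with Theorem~\ref{thm:lattice3} and Theorem~\ref{thm:lattice4}, the four corner layers separating $B(n,k+1)$ from $B(n,k)$ fail to be peelings once $k\ge k_0$ (there $FR(B(n,k))<(2k+1)/2$), so Theorem~\ref{thm:peeling1} cannot be invoked. Hence I would argue constructively.

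By the cooling theorem (Theorem~\ref{thm:cooling}), self-optimality of $B(n,k)$ is equivalent to the existence of a cooling function on $B(n,k)$ rel $h_0\equiv1$ of norm exactly $FR(B(n,k))$; and since $k\ge k_0$, Theorem~\ref{thm:lattice4} gives $FR(B(n,k))\ge FR(B(n,k+1))$. By the inductive hypothesis and the cooling theorem, $B(n,k+1)$ carries an optimal cooling function $f'$ of norm $FR(B(n,k+1))$, which by Corollary~\ref{thm:lattice9} is determined by its difference diagram $D'$. The geometry is that $B(n,k)$ is $B(n,k+1)$ with the four corner layers removed in forming $B(n,k+1)$ re-attached — four pairwise disjoint rows and columns of $2k+1$ lattice points each, since $k<\lceil n/2\rceil$ — and that every $a$-orbit and $b$-orbit of $B(n,k)$ meets these layers only in short segments abutting $\bd E(B(n,k))$.

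The construction I would use: extend $D'$ by $0$ to all of $B(n,k)\times C$, then add a correction diagram $E$ concentrated near the four re-attached layers — a diagram of second differences in the sense introduced just before Theorem~\ref{thm:lattice8} — so that $D=D'+E$ is realized as the difference diagram of a cooling function $f$ on $B(n,k)$ all of whose boundary values are $\pm FR(B(n,k))$. For this, $D$ must meet two requirements. First, $\sum_{c\in C}D(s,c)=1$ for every $s\in B(n,k)$ (any cooling function on $B(n,k)$ of norm $FR(B(n,k))$ automatically forces equality at every vertex and value $FR(B(n,k))$ on every boundary edge, by summing the vertex inequalities over $B(n,k)$); this pins the vertex sums of $E$ to $0$ on $B(n,k+1)$ and to $1$ at the new corner vertices. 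Second, along every $c$-orbit the partial sums of $D$ over initial segments must remain in $[0,2\,FR(B(n,k))]$ with total sum $2\,FR(B(n,k))$ — the condition recorded in the Important Remark. Because $FR(B(n,k))\ge FR(B(n,k+1))$, the orbits lying wholly inside $B(n,k+1)$ keep enough room once their partial sums are shifted up by the heat injected in the first corner layer they meet; the work is to design $E$ on the corner portions of the orbits so that the extra traverse $2\big(FR(B(n,k))-FR(B(n,k+1))\big)$ is absorbed without violating this interval constraint.

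The hard part — and the reason Step~(2) ``requires a fair amount of further work'' — is precisely the verification of the second requirement on the orbits that enter the corner layers. There one is in general forced into an oscillatory traverse, with $E$ (hence $D$) taking both signs, and one must check that the running partial sums never escape $[0,2\,FR(B(n,k))]$; the Important Remark flags $B(8)$ as the first case where a nonnegative difference diagram will not suffice, and the bookkeeping will presumably need a case analysis of the corner orbits and perhaps a secondary induction on the position within a corner layer. Once such an $E$ is exhibited and both requirements verified, Theorem~\ref{thm:lattice8} produces from $D$ and the boundary values $\pm FR(B(n,k))$ a cooling function $f$ of norm $FR(B(n,k))$; its norm cannot be smaller, so $B(n,k)$ is self-optimal and the downward induction closes.
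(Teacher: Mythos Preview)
Your high-level plan is the paper's: downward induction on $k$ with base case the square $B(n,\lceil n/2\rceil)$ via Theorem~\ref{thm:lattice10}, and the inductive step by constructing an optimal cooling function for $B(n,k)$ through difference diagrams and a second-difference correction $E$. Two things are missing, however, and they are where the actual work lies.

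First, your claim that $E$ can be ``concentrated near the four re-attached layers'' is incorrect. A horizontal orbit of $B(n,k)$ at height $y$ with $k<|y|<n-k$ (such rows exist once $2k\le n-2$, hence for most $k$ in the range $[k_0,\lceil n/2\rceil-1]$) meets none of the four new layers and coincides as a set with the same row of $B(n,k+1)$. The sum of $D'$ along it is $2\,FR(B(n,k+1))$, but $D$ must sum to $2\,FR(B(n,k))$ along it, so $E$ must contribute $2\bigl(FR(B(n,k))-FR(B(n,k+1))\bigr)>0$ on that row even though it never enters a corner. Thus $E$ is necessarily global, and your sentence ``once their partial sums are shifted up by the heat injected in the first corner layer they meet'' simply does not cover these orbits. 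In the paper's construction, \emph{every} old cell of the second-difference diagram receives exactly two tokens --- not zero.

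Second, the paper does not jump from $B(n,k+1)$ to $B(n,k)$ in one step. It interposes the intermediate sets $B(n,k+1,i)$ for $i=0,\dots,4$, adding a single layer at a time; with only one new row $R_0$ the token bookkeeping for $E$ becomes tractable (in units of $1/\zb_{i+1}$: every old cell gets $2$ tokens, every cell of $R_0$ gets $\zb_{i+1}$; each old orbit gets $2(2k+1)$ tokens, $R_0$ gets $2\za_{i+1}$). The feasibility of this distribution --- including the partial-sum bound you correctly identify as the hard part --- is governed by the inequalities $(2k+1)(\zb_j-2(2k+1))\le 2\za_j<(2k+1)\zb_j$ of Lemma~\ref{lemma:lattice12}, together with a separate check that $k\ge(n-2)/4$ to keep the oscillatory row sums in $[0,2(2k+1)]$ over the critical region. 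Your outline has no analogue of this one-layer decomposition or of Lemma~\ref{lemma:lattice12}, and without them the construction you sketch cannot be completed.
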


\begin{proof} Since the $xy$-square $B(n,\lceil n/2\rceil)$ is
self-optimal by the previous theorem, it is clearly enough to prove
that, if $\lceil n/2\rceil > k \ge k_0$ and $B(n,k+1)$ is
self-optimal, then $B(n,k)$ is also self-optimal.  Let $k$ be an
integer in the interval $[k_0,\lceil n/2\rceil-1]$.

We introduce $B(n,k+1,i)$, for $i\in \{0,1,2,3,4\}$, as the set formed
from $B(n,k+1)$ by adding $i$ of the layers of $B(n,k)$ not in
$B(n,k+1)$.  We let $FR(B(n,k+1,i)=\za_i/\zb_i$.  It suffices
to show that, if $B(n,k+1,i)$ is self-optimal, then $B(n,k+1,i+1)$ is
self-optimal, for $i\in \{0,1,2,3\}$. We shall need the
inequalities given in the following lemma.

\begin{lemma}\label{lemma:lattice12} If $j\in
\{0,1,2,3,4\}$, then
  \begin{equation*}
(2k+1)(\zb_j-2(2k+1))\le 2\za_j<(2k+1)\zb_j.
  \end{equation*}
\end{lemma}

\begin{proof} To prove the second inequality, suppose that
$FR(B(n,k))=\zg_k/\zd_k$, as in the proof of
Theorem~\ref{thm:lattice3}.  Because $k_0\le k\le
\left\lceil\frac{n}{2}\right\rceil-1$, Theorems~\ref{thm:lattice3} and
\ref{thm:lattice4} combine to imply that $\zg_k/\zd_k<(2k+1)/2$.
Hence
  \begin{equation*}
2\za_j=2(\zg_k+j(2k+1))<(2k+1)(\zd_k+2j)=(2k+1)\zb_j.
  \end{equation*}
This proves the second inequality.

We prove the first inequality first for the case $j = 4$, where
$B(n,k+1,4) = B(n,k)$. The inequality is then equivalent to the
inequality
  \begin{equation*}
(2k+1)(\zb_4-2(2k+1))\le 2\za_4.
  \end{equation*}
Substituting known values for $\za_4 = |B(n,k)|$ and $\zb_4 = |\bd
E(B(n,k))|$, as in the proof of Theorem~\ref{thm:lattice4}, we seek to
prove that
  \begin{equation*}
(2k+1)(8n+4-8k-2(2k+1))\le 2(2n^2+2n+1-4k^2).
  \end{equation*}
 This reduces to
$$4k^2 + (2-4n)k+(n^2-n)\ge 0,$$ equivalently,
  \begin{equation*}
(2k-n)(2k-(n-1))\ge 0.
  \end{equation*}
Since $k \le \lceil n/2\rceil -1\le (n-1)/2$, the proof is complete
for $j = 4$.

Now we induct downward for $j= 3,2,1,0$. In the original inequality,
before manipulation, each move downward subtracts exactly $2(2k+1)$
from each side of the inequality. This completes the proof.
\end{proof}

Assuming inductively that $B(n,k+1,i)$ is self-optimal, we let $D$
denote the difference diagram of some optimal cooling function for
$B(n,k+1,i)$. We employ $1/\zb_i$ as unit and assume inductively that
we can choose the entries of $D$ to be integers. We extend this
difference diagram by $0$ to the larger set $B(n,k+1,i+1)\times C$.

We seek a difference diagram $D_+$, expressed in units $1/\zb_{i+1}=
1/(\zb_i+2)$, that will exhibit $B(n,k+1,i+1)$ as self-optimal and ask
ourselves the properties that must be satisfied by the second
difference $E = D_+ - D$. Even though the two diagrams are expressed
in different units, each will have integer entries, and we express $E$
as the integer difference of those entries. In other words, we
manipulate the number of heat loss \emph{units} required and compare
the number of units in the two diagrams.

We label the rows of $B = B(n,k+1,i+1)$ by symbols $R_0$, $R_1$,
$\ldots$ from top to bottom and assume that $R_0$ is the new row with
$2k+1$ elements.

We label the columns of $B$ that contain an element of $R_0$ by
symbols $C_1$, $\ldots$, $C_{2k+1}$ from left to right and call these
columns the \emph{central columns}.

We label the other columns of $B$ by symbols $S_1$, $S_2$, $\ldots$
from left to right and call these columns the \emph{side columns}.

We view the elements of $B$ as cells for holding heat loss tokens.
Our task is to deposit heat loss tokens in these cells subject to the
following conditions.

(1) Row $R_0$ is assigned $2\cdot \za_{i+1}$ tokens. This new row
$R_0$ had no heat loss at all assigned by the cooling function for
$B(n,k+1,i)$; hence the difference function $E$ must account for the
total difference $2\cdot \za_{i+1}$ required along every orbit. All
other rows and columns are assigned only $2(2k+1)$ tokens. This
requirement reflects the fact that the old rows and columns require
exactly $2(\za_{i+1} - \za_{i})= 2(2k+1)$ more heat loss units than
assigned to that row or column by the cooling function for
$B(n,k+1,i)$.

(2) After distribution, each cell of $R_0$ is to have $\zb_{i+1}$
tokens. At these vertices of the new orbit $R_0$, the total heat loss
recorded by $E$ must be $\zb_{i+1}$ since there was no heat loss there
with the cooling function for $B(n,k+1,i)$. All other cells are to
have $2$ tokens. This requirement reflects the fact that the old
vertices require only $\zb_{i+1} - \zb_{i} = 2$ more heat loss units
than assigned to them by the cooling function for $B(n,k+1,i)$.

{\bf Heat-loss token distribution in the new row $R_0$:} Dividing, we find
$$2\cdot \za_{i+1} = (2k+1)q + r,$$where the quotient $q$ and the
remainder $r$ are integers and $0 \le r < 2k+1$. That $q < \zb_{i+1}$
follows from the second inequality of Lemma~\ref{lemma:lattice12} with
$j=i+1$.  That $\zb_{i+1}-q \le 2(2k+1)$ follows from the first
inequality of Lemma~\ref{lemma:lattice12} with $j=i+1$.

Thus we may place $q$ of the tokens assigned to the row $R_0$ into
each cell of $R_0$, with one extra in each of the first $r$ cells. We
may then place either $\zb_{i+1} - q$ (or $\zb_{i+1} - q - 1$, as
appropriate) column tokens into the cells of $R_0$ so that the total
number of tokens in each cell is $\zb_{i+1}$, as required.

{\bf Distribution of the heat-loss tokens associated with the side
columns:} In side column $S_j$, place $2$ of the tokens assigned
to $S_j$ into each of the bottom $2k+1$ cells of $S_j$.

{\bf The critical region:} Almost always there will be cells of the
side columns which still contain no heat loss tokens. These cells will
form two triangular regions. These two triangular regions together
with the portions of the central columns at the same height form what
we call the \emph{critical region}. See Figure~\ref{fig:region}.

  \begin{figure}
\centering
\includegraphics{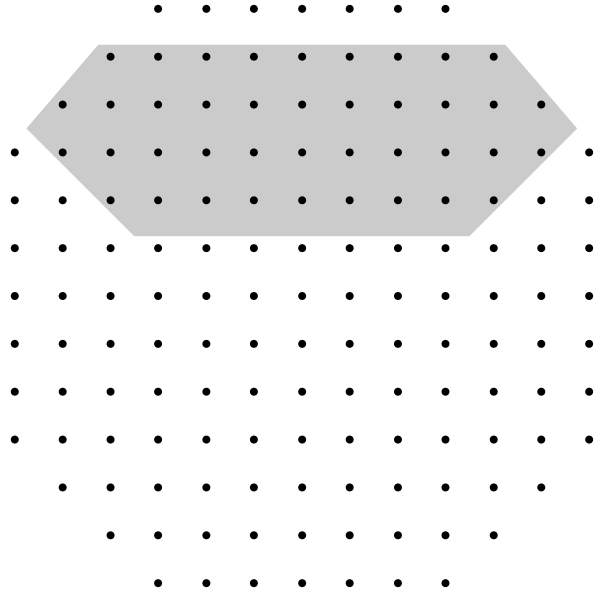} \caption{ The set
$B(9,4,4)=B(9,3)$, whose critical region is within the shaded polygon.}
\label{fig:region}
  \end{figure}

{\bf Distribution in the central columns beneath the critical region:}
Consider a row $R_j$ that lies beneath the critical region. The cells
in that row which have not yet been filled with two heat loss tokens
are precisely those in the central columns. Place $2$ of the heat
loss tokens assigned to $R_j$ into each of those cells. This will use up
all of the $2(2k+1)$ heat loss tokens assigned to that row.

{\bf Distribution of heat-loss tokens in the critical region:} Fill
that portion of the triangular regions in row $R_j$ with 2 tokens
assigned to $R_j$ in each cell.  For large $n$, the number of tokens
needed for this distribution exceeds the number, $2(2k+1)$, of tokens
assigned to $R_j$.  For each extra token needed, we create a new heat
loss token and a compensating \emph{negative} heat loss token.  These
negative heat loss tokens will be distributed in the intersection of
the row $R_j$ with the central columns.

In this paragraph we verify that the partial sums for $E$ along
initial segments of $R_j$ stay in the range $[0,2\cdot (2k+1)]$.  It
will then follow that the partial sums along initial segments of $R_j$
for the associated cooling function stay in the range
$[-FR(B(n,k+1,i+1)), FR(B(n,k+1,i+1))]$.  The number of side columns
left of the central columns which intersect $R_j$ in the critical
region is at most $n-2k-1$.  In order for the partial sums for $E$
along initial segments of $R_j$ to be at most $2\cdot (2k+1)$, we
need $n-2k-1\le 2k+1$.  This is equivalent to $k\ge (n-2)/4$.  As in
the proof of Theorem~\ref{thm:lattice4}, we let
$p(x)=4x^2-8nx+2n^2-2n-1$, and we verify that $p(n)<0$ and
  \begin{equation*}
p\left(\frac{n-2}{4}\right)=\frac{(n-2)^2}{4}+2n-1>0.
  \end{equation*}

Since $k$ is between the smaller root of $p(x)$ and $n$, it follows
that $k\ge (n-2)/4$.  This verifies that our partial sums are at most
$2\cdot (2k+1)$.  Symmetry shows that they are nonnegative.  Thus the
partial sums for $E$ along initial segments of $R_j$ stay in the range
$[0,2\cdot (2k+1)]$.

It remains only to distribute the remaining positive and negative row
tokens and the remaining central column tokens to fill the central
portion of the critical region.

We put all of the remaining column tokens in a single pot. We know
exactly how many row tokens remain to be placed in each row, and in
each row either every remaining token is positive or every remaining
token is negative. This tells us exactly how many column tokens need
to be placed in each row. We assign that number of column tokens to
each row.  Three paragraphs below we show that the pot contains
exactly the correct number of tokens to do this.

We distribute the column tokens assigned to row $R_1$ as follows. We
place the first column token assigned to row $R_1$ in column $C_1$, the
second in column $C_2$, the third in column $C_3$, and so on,
continuing to column $C_1$ if necessary, until the column tokens
assigned to $R_1$ have been exhausted.

Augment row and column numbers by 1 from where the last
column token was inserted. Place the column tokens assigned
to that row in successive columns. Iterate.  When we are done, every
column will contain the same number, $2k+1$, of column tokens.  Then
we distribute the remaining row tokens so that the total value of the
tokens in each of these cells is 2.

Now we show that the pot contains the correct number of tokens.
Suppose that all of the tokens are distributed, possibly leaving some
of these cells without the required two tokens.  After distributing
all of the tokens, the heat loss along every row and column is $2
\za_{i+1}$ units.  There are $\zb_{i+1}/2$ rows and columns, so the
total number of units is $\za_{i+1}\zb_{i+1}$.  Since there are
$\za_{i+1}$ cells, the average number of units per cell is
$\zb_{i+1}$.  It follows that every cell has received the needed
number of tokens.  Similarly, if every cell has received the needed
number of tokens, then the average heat loss along every row and
column is $2 \za_{i+1}$ units.  This means that no tokens remain.  The
construction is then complete.

It remains only to note that the column distributions are partitions
(no negative values for $E$). Although the row sums oscillate because
of the negative tokens, again the norm stays in the desired
bounds. Hence the associated cooling function has the correct norm.

Since we have constructed the required second difference $E$, we
conclude that it is possible to construct a cooling function for
$B(n,k+1,i+1)$ that has norm $FR(B(n,k+1,i+1)$, and the proof is
complete.  
\end{proof}

\end{document}